\documentclass[a4paper,11pt,leqno]{amsart}
\usepackage{a4wide}
\usepackage{amsmath}
\usepackage[italian, english]{babel}
\usepackage{amsfonts}
\usepackage{amssymb}
\usepackage{dsfont}
\usepackage{mathrsfs}
\usepackage{graphicx}
\usepackage{fancyhdr}
\usepackage{amsthm}
\usepackage{empheq}
\usepackage{cases}
\usepackage[all]{xy}
\usepackage{stmaryrd}
\usepackage[colorlinks, citecolor=blue, linkcolor=red]{hyperref}
\usepackage{mathrsfs}

\usepackage{mathtools}
\mathtoolsset{showonlyrefs}


%
%



\newcommand{\kn}{\mathbin{\bigcirc\mkern-15mu\wedge}}

\newcommand{\erre}{\mathds{R}}

\newcommand{\ricc}{\operatorname{Ric}}
\newcommand{\diver}{\operatorname{div}}

\newcommand{\ra}{\rightarrow}

\newcommand{\set}[1]{{\left\{#1\right\}}}               
\newcommand{\pa}[1]{{\left(#1\right)}}                  
\newcommand{\sq}[1]{{\left[#1\right]}}                  
\newcommand{\abs}[1]{{\left|#1\right|}}                 






\newtheorem{ackn}{Acknowledgments\!}

\newtheorem{theorem}{\textbf{Theorem}}[section]
\newtheorem{lemma}[theorem]{\textbf{Lemma}}
\newtheorem{proposition}[theorem]{\textbf{Proposition}}
\newtheorem{cor}[theorem]{\textbf{Corollary}}

\newtheorem{rem}[theorem]{\textbf{Remark}}
\theoremstyle{remark}

\numberwithin{equation}{section}
%

%
%

%

\title[Bochner type formulas for the Weyl tensor on four dimensional Einstein manifolds]
{Bochner type formulas for the Weyl tensor \\on four dimensional Einstein manifolds}

\author[Giovanni Catino]{Giovanni Catino}
\address[Giovanni Catino]{Dipartimento di Matematica, Politecnico di Milano, Piazza Leonardo da Vinci 32, 20133 Milano, Italy}
\email[]{giovanni.catino@polimi.it}

\author[Paolo Mastrolia]{Paolo Mastrolia}
\address[Paolo Mastrolia]{Dipartimento di Matematica, Universit\`{a} degli Studi di Milano, Via Saldini, Italy.}
\email[]{paolo.mastrolia@unimi.it}

\linespread{1.2}
%
%

\begin{document}


\begin{abstract} The very definition of an Einstein metric implies that all its geometry is encoded in the Weyl tensor. With this in mind, in this paper we derive higher-order Bochner type formulas for the Weyl tensor on a four dimensional Einstein manifold. In particular, we prove a second Bochner type formula which, formally, extends to the covariant derivative level the classical one for the Weyl tensor obtained by Derdzinski in 1983. As a consequence, we deduce some integral identities involving the Weyl tensor and its derivatives on a compact four dimensional Einstein manifold.
\end{abstract}

\maketitle

\begin{center}

\noindent{\it Key Words: Einstein metrics; Weyl tensor; Bochner type formulas}

\medskip

\centerline{\bf AMS subject classification:  53C20, 53C21, 53C25}

\end{center}

\

\section{Introduction}

A smooth Riemannian manifold $(M,g)$ of dimension $n\geq 3$ is said to be {\em Einstein} if the Ricci tensor of the metric $g$ satisfies
$$
\ricc \,=\, \lambda \, g \,,
$$
for some $\lambda\in\erre$. In particular, every Einstein metric has scalar curvature $R=n\lambda$. In dimension three, Einstein metrics have constant sectional curvature, where in dimension $n\geq 4$, the decomposition of the Riemann tensor and the Einstein condition imply
$$
Riem \,=\, W + \frac{R}{2n(n-1)} g\kn g \,,
$$
where $W$ is the {\em Weyl tensor} and $\kn$ is the Kulkarni-Nomizu product. Thus, all the {\em geometry} of an Einstein metric $g$ is encoded in its Weyl tensor $W$ and, obviously, in the constant $R$. Moreover, the special form of $Riem$ naturally restricts the class of admissible Weyl-type tensors (see \cite{lich, tachi, besse}). We recall that the Weyl tensor $W$ has the same symmetries of $Riem$, is totally trace free and, on an Einstein manifold, it is also divergence free; this latter property yields that the second Bianchi identity holds also for $W$, implying a PDE for the Laplacian of $W$ of the type
$$
\Delta W \,=\, \frac{R}{2} \,W + W\ast W 
$$
(where $W\ast W$ is a quadratic term). Contracting the previous equation with $W$, after some manipulations one can get the well known (first) {\em Bochner type formula}
$$
\frac{1}{2}\Delta |W|^{2} = |\nabla W|^{2}+\frac{R}{2}|W|^{2}-3\,W_{ijkl}W_{ijpq}W_{klpq} \,,
$$
which, in this particular form, holds only on four dimensional manifold with harmonic Weyl curvature (see \cite{derd} and the next section for details). Here and in the rest of the paper we adopt the Einstein summation convention over repeated indexes.

The aim of this paper is to find new {\em algebraic/analytic constraints} for $W$ on four dimensional Einstein manifolds. The starting point of our analysis is the following simple observation: if a smooth function $u$ satisfies a semilinear equation $\Delta u = f(u)$ on a $n$ dimensional Einstein manifold, then the classical Bochner formula becomes
$$
\frac{1}{2}\Delta |\nabla u|^{2} = |\nabla^{2} u|^{2} + \Big(\frac{R}{n}+f'(u)\Big)|\nabla u|^{2}\,.
$$
Thus, under suitable assumptions, one can deduce Liouville type results for this class of PDEs. With this in mind, exploiting the fact that the Weyl tensor of an Einstein metric formally satisfies a semilinear equation, among other results in this paper we derive a (second) Bochner type formula involving the covariant derivative of $W$. Namely, we prove the following result:

\begin{theorem}\label{teo-sbf} Let $(M,g)$ be a four dimensional Einstein manifold. Then the Weyl tensor satifies the equation
\begin{equation*}
\frac{1}{2}\Delta |\nabla W|^{2} = |\nabla^{2} W|^{2}+\frac{13}{12}R|\nabla W|^{2}-10\,W_{ijkl}W_{ijpq,t}W_{klpq,t} \,.
\end{equation*}
\end{theorem}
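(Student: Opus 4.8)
The plan is to differentiate the second-order (``semilinear'') equation for $W$ recalled in the Introduction and to contract the result with $\nabla W$, carrying out one derivative higher exactly the derivation that produces the first Bochner formula. The ingredients I would use are: the fact that on a four-dimensional Einstein manifold $W$ is harmonic, $W_{ijkl,l}=0$, so that $\operatorname{div}Riem=0$ and $\nabla Riem=\nabla W$; the Weitzenb\"ock identity $\Delta W_{ijkl}=\tfrac R2 W_{ijkl}+(W\ast W)_{ijkl}$ with its explicit quadratic term; the second Bianchi identity for $W$, namely $W_{ijkl,m}+W_{ijlm,k}+W_{ijmk,l}=0$; and the four-dimensional algebraic identities for the Weyl tensor, in particular $W_{ipqr}W_{jpqr}=\tfrac14|W|^2\delta_{ij}$, together with the relations among the few independent cubic contractions of $W$ and their differentiated analogues. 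Since $R$ is constant and all single traces of $\nabla W$ vanish (trace-freeness of $W$ and $\operatorname{div}W=0$), these will be the only external inputs.

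First I would apply $\nabla_t$ to $\Delta W_{ijkl}=\tfrac R2 W_{ijkl}+(W\ast W)_{ijkl}$. The right-hand side becomes $\tfrac R2\,W_{ijkl,t}$ plus terms of the schematic type $W\ast\nabla W$. On the left-hand side I would commute $\nabla_t$ past $\Delta$ via $\nabla_t\Delta S=\Delta\nabla_t S+[\nabla_t,\Delta]S$ for the $(0,4)$-tensor $S=W$; the commutator produces $\mathrm{Ric}\ast\nabla W$, $Riem\ast\nabla W$ and $(\operatorname{div}Riem)\ast W$ terms. Using $\mathrm{Ric}=\tfrac R4 g$, $\operatorname{div}Riem=0$ and $\nabla Riem=\nabla W$, and splitting $Riem$ into its Weyl and scalar parts, this collapses to a multiple of $R\,\nabla W$ plus an explicit $W\ast\nabla W$ term (no residual trace terms survive, by the remark above). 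The net outcome is a Weitzenb\"ock-type equation for the $(0,5)$-tensor $\nabla W$, of the form $\Delta(\nabla_t W_{ijkl})=\beta R\,\nabla_t W_{ijkl}+Q_{tijkl}$, with $\beta\in\mathbb Q$ explicit and $Q$ an explicit tensor quadratic in $W$ and $\nabla W$; here the second Bianchi identity is used to put $Q$ into normal form.

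Next I would use the elementary identity $\tfrac12\Delta|\nabla W|^2=|\nabla^2 W|^2+\langle\nabla W,\Delta\nabla W\rangle$ (for the rough Laplacian this carries no curvature correction) and substitute the Weitzenb\"ock equation just obtained, which gives
\[
\tfrac12\Delta|\nabla W|^2=|\nabla^2 W|^2+\beta R|\nabla W|^2+\langle\nabla W,Q\rangle,
\]
where $\langle\nabla W,Q\rangle$ is a sum of cubic contractions of the schematic form $W\ast\nabla W\ast\nabla W$. The final step is the four-dimensional reduction: using the Weyl symmetries, the second Bianchi identity, total trace-freeness and, crucially, the identity $W_{ipqr}W_{jpqr}=\tfrac14|W|^2\delta_{ij}$ and its differentiated consequences, every cubic contraction that appears has to be rewritten as a multiple of the single model invariant $W_{ijkl}W_{ijpq,t}W_{klpq,t}$. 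Tracking the rational coefficients through Steps~2--4 then forces $\beta R=\tfrac{13}{12}R$ and total cubic coefficient $-10$, which is the asserted formula.

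The main obstacle is precisely this bookkeeping. Commuting $\nabla$ and $\Delta$ on a tensor of this valence generates a large number of curvature terms, each of which must be simplified using the many symmetries of $W$ and the Bianchi identities; and the collapse of all the $W\ast\nabla W\ast\nabla W$ contractions to one invariant is the derivative-level analogue of the (already non-trivial) four-dimensional linear-algebra fact underlying the classical Bochner formula. The structural steps are routine; extracting the exact constants $\tfrac{13}{12}$ and $10$ is where essentially all of the work lies.
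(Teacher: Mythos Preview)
Your outline is essentially the paper's own argument: obtain a rough Bochner identity by commuting $\nabla$ through $\Delta$ (this is Proposition~\ref{pro-boch}), evaluate $\langle\nabla W,\nabla\Delta W\rangle$ directly from the explicit equation for $\Delta W$ (this is Lemma~\ref{lem-paolo}), and then collapse all the resulting $W\ast\nabla W\ast\nabla W$ contractions to the single invariant $W_{ijkl}W_{ijpq,t}W_{klpq,t}$.

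The one place your sketch understates the difficulty is that last collapse. The two reductions actually needed are
\[
W_{ijkl}\,W_{jpqt,k}\,W_{ipqt,l}=-\tfrac12\,W_{ijkl}\,W_{ijpq,t}\,W_{klpq,t},
\qquad
W_{ijkl}\,W_{ipkq,t}\,W_{jplq,t}=\tfrac12\,W_{ijkl}\,W_{ijpq,t}\,W_{klpq,t},
\]
and the paper does \emph{not} extract them from $W_{ipqr}W_{jpqr}=\tfrac14|W|^2\delta_{ij}$, the second Bianchi identity, or ``differentiated consequences'' of the undifferentiated cubic relation \eqref{WWW}. They are proved (Lemmas~\ref{lem-key1} and~\ref{lem-key2}) by writing $W^\pm$ in Derdzi\'nski's eigenframe $\omega^\pm,\eta^\pm,\theta^\pm$ of $\Lambda^\pm$, using the quaternionic multiplication table of that frame, and---for the first identity---the pointwise relations \eqref{eq-divz} that express $d\lambda^\pm,d\mu^\pm,d\nu^\pm$ in terms of the connection one-forms $a^\pm,b^\pm,c^\pm$ and encode $\operatorname{div}W^\pm=0$. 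Note in particular that in the first identity the derivative index is contracted against a Weyl index rather than left free, so it is not of the shape produced by differentiating an identity among cubic contractions of $W$ alone. Your proposal locates the work correctly but does not yet supply the tool that carries it out.
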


This formula extends to the covariant derivative level the previous one for the Weyl tensor obtained by Derdzinski in \cite{derd},  but it requires  the metric to be Einstein and not only to have harmonic Weyl curvature.
We point out that it is possible to derive quite easily a ``rough'' Bochner type identity for the covariant derivative of Weyl (Proposition \ref{pro-boch}), and, with some work, even a formula for the $k$-th covariant derivative $\nabla^{k}W$ (Proposition \ref{pro-boch-k}). These identities, although new, does not exploit the algebraic peculiarities of dimension four, which are on the contrary essential in the proof of Theorem \ref{teo-sbf} (see Lemma \ref{lem-key1} and \ref{lem-key2}).

An immediate consequence of our Bochner formula is the  following second order $L^{2}$-integral identity for the self-dual and anti-self-dual part of the Weyl tensor $W^{\pm}$: 
\begin{theorem}\label{thm-intbochintro} Let $(M^{4},g)$ be a compact four dimensional Einstein manifold. Then
$$
\int |\nabla^{2}W^{\pm}|^{2} -\frac{5}{3}\int|\Delta W^{\pm}|^{2} + \frac{R}{4}\int |\nabla W^{\pm}|^{2} = 0\,.
$$
\end{theorem}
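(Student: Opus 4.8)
The plan is to integrate the second Bochner formula of Theorem~\ref{teo-sbf} over the compact manifold and then to express the resulting cubic term entirely through $\int|\Delta W^{\pm}|^{2}$ and $\int|\nabla W^{\pm}|^{2}$. First, note that on an oriented four dimensional Einstein manifold the self-dual and anti-self-dual parts of the Weyl tensor decouple: the splitting $\Lambda^{2}=\Lambda^{+}\oplus\Lambda^{-}$ is parallel and orthogonal, $W=W^{+}+W^{-}$ with $W^{\pm}$ trace free and valued in $S^{2}(\Lambda^{\pm})$, and the second Bianchi identity yields $\Delta W^{\pm}=\frac{R}{2}W^{\pm}+W^{\pm}\ast W^{\pm}$ with no mixed term. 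Since the derivation of Theorem~\ref{teo-sbf} uses only this PDE and algebraic identities compatible with the $\Lambda^{\pm}$ splitting — and since $|\nabla W|^{2}=|\nabla W^{+}|^{2}+|\nabla W^{-}|^{2}$, $|\nabla^{2}W|^{2}=|\nabla^{2}W^{+}|^{2}+|\nabla^{2}W^{-}|^{2}$, and the cubic term splits likewise (the mixed contractions vanishing by $\Lambda^{+}\perp\Lambda^{-}$) — the formula holds verbatim for $W^{\pm}$. Integrating it over $M$ annihilates the left-hand side and gives
\begin{equation*}
\int|\nabla^{2}W^{\pm}|^{2}+\tfrac{13}{12}R\int|\nabla W^{\pm}|^{2}=10\,C^{\pm}\,,\qquad C^{\pm}:=\int W^{\pm}_{ijkl}W^{\pm}_{ijpq,t}W^{\pm}_{klpq,t}\,,
\end{equation*}
so everything reduces to evaluating $C^{\pm}$.

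Second, I would integrate $C^{\pm}$ by parts in the derivative index $t$. Applying the divergence theorem to the vector field $X^{t}=W^{\pm}_{ijkl}W^{\pm}_{ijpq}\nabla^{t}W^{\pm}_{klpq}$ gives $0=\int\nabla_{t}X^{t}$; in the Leibniz expansion the two terms in which the derivative falls on an undifferentiated Weyl factor each equal $C^{\pm}$ after relabelling dummy indices (the contraction pattern — one Weyl factor sharing an index pair with each of the two differentiated factors, which in turn share the remaining pair and the derivative index — is symmetric), while the third term is $\int W^{\pm}_{ijkl}W^{\pm}_{ijpq}\Delta W^{\pm}_{klpq}$. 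Hence
\begin{equation*}
C^{\pm}=-\tfrac{1}{2}\int W^{\pm}_{ijkl}W^{\pm}_{ijpq}\,\Delta W^{\pm}_{klpq}\,.
\end{equation*}

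Third — the technical heart — I would substitute $\Delta W^{\pm}=\frac{R}{2}W^{\pm}+W^{\pm}\ast W^{\pm}$ and reduce the resulting cubic and quartic Weyl integrals using the algebra of dimension four. It is convenient to regard $W^{\pm}$ pointwise as a trace free symmetric endomorphism $A$ of the three dimensional bundle $\Lambda^{\pm}$: then the quadratic part of the Bianchi PDE is forced (by the one-dimensionality of the space of quadratic $S^{2}_{0}(\Lambda^{\pm})$-covariants of $A$, with coefficient fixed by the classical first Bochner formula) to be a specific multiple of $A^{2}-\tfrac{1}{3}|A|^{2}\operatorname{Id}$, and Cayley--Hamilton on the $3\times 3$ matrix $A$ gives $\operatorname{tr}(A^{4})=\tfrac{1}{2}(\operatorname{tr}A^{2})^{2}$ and $\operatorname{tr}(A^{3})=3\det A$. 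Inserting these, and using the integrated first Bochner formula to trade $\int\operatorname{tr}(A^{3})$ for $\int|\nabla W^{\pm}|^{2}$ and $\int|W^{\pm}|^{2}$, one finds that all $\int|W^{\pm}|^{2}$ and $\int|W^{\pm}|^{4}$ contributions cancel, leaving
\begin{equation*}
\int W^{\pm}_{ijkl}W^{\pm}_{ijpq}\,\Delta W^{\pm}_{klpq}=-\tfrac{1}{3}\int|\Delta W^{\pm}|^{2}-\tfrac{R}{6}\int|\nabla W^{\pm}|^{2}\,,\qquad\text{so}\qquad C^{\pm}=\tfrac{1}{6}\int|\Delta W^{\pm}|^{2}+\tfrac{R}{12}\int|\nabla W^{\pm}|^{2}\,.
\end{equation*}

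Finally, feeding $C^{\pm}$ back into the integrated Bochner formula,
\begin{equation*}
\int|\nabla^{2}W^{\pm}|^{2}+\tfrac{13}{12}R\int|\nabla W^{\pm}|^{2}-10\Big(\tfrac{1}{6}\int|\Delta W^{\pm}|^{2}+\tfrac{R}{12}\int|\nabla W^{\pm}|^{2}\Big)=0\,,
\end{equation*}
which is precisely $\int|\nabla^{2}W^{\pm}|^{2}-\tfrac{5}{3}\int|\Delta W^{\pm}|^{2}+\tfrac{R}{4}\int|\nabla W^{\pm}|^{2}=0$. The main obstacle is the third step: one must identify the exact quadratic term in the Bianchi PDE for $W^{\pm}$ and then run the dimension-four algebra carefully — tracking the normalization relating tensor contractions to endomorphism traces — so that the $\int|W^{\pm}|^{2}$ and $\int|W^{\pm}|^{4}$ terms genuinely cancel; the divergence-theorem bookkeeping in the second step, though elementary, also requires care with the index relabellings.
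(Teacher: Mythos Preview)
Your proposal is correct and follows the same overall strategy as the paper: integrate the second Bochner formula (Theorem~\ref{teo-sbf}) over $M$ and then express the cubic term $C^{\pm}=\int W^{\pm}_{ijkl}W^{\pm}_{ijpq,t}W^{\pm}_{klpq,t}$ purely in terms of $\int|\Delta W^{\pm}|^{2}$ and $\int|\nabla W^{\pm}|^{2}$. The difference lies in how you evaluate $C^{\pm}$. The paper obtains the relation $6\,C^{\pm}=\int|\Delta W^{\pm}|^{2}+\tfrac{R}{2}\int|\nabla W^{\pm}|^{2}$ in one stroke by integrating Lemma~\ref{lem-paolo} (equivalently equation \eqref{eq-lapsel}): one differentiates the known PDE $\Delta W^{\pm}=\tfrac{R}{2}W^{\pm}-\cdots$ and contracts with $\nabla W^{\pm}$, using Lemma~\ref{lem-key2}. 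You instead integrate by parts once more to $C^{\pm}=-\tfrac{1}{2}\int W^{\pm}W^{\pm}\Delta W^{\pm}$, then identify the quadratic part of $\Delta W^{\pm}$ as the unique $S^{2}_{0}(\Lambda^{\pm})$-covariant $-6\big(A^{2}-\tfrac{1}{3}\operatorname{tr}(A^{2})I\big)$ and close the computation with Cayley--Hamilton for the $3\times 3$ trace-free matrix $A$ together with the integrated first Bochner formula. Your route is valid and has the virtue of making the $3$-dimensional endomorphism algebra do the work; however it is more circuitous, since Lemma~\ref{lem-paolo} is already an ingredient in the proof of Theorem~\ref{teo-sbf} you invoke, so the paper's shortcut is available for free. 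One minor point: your assertion that Theorem~\ref{teo-sbf} holds verbatim for $W^{\pm}$ relies on the vanishing of mixed contractions such as $W^{\mp}_{ijkl}W^{\pm}_{jpqt,k}W^{\pm}_{ipqt,l}$; the paper establishes this explicitly (equation~\eqref{eq-mix}), and you should invoke it rather than leave it implicit.
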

As a consequence, we show the following identity:
\begin{proposition} Let $(M^{4},g)$ be a compact four dimensional Einstein manifold. Then
$$
\int |\nabla^{2} W^{\pm}|^{2} + \frac{23}{12}R \int |\nabla W^{\pm}|^{2} = \frac{5}{12}\int |W^{\pm}|^{2}\Big(6|W^{\pm}|^{2}-R^{2}\Big) \,.
$$ 
\end{proposition}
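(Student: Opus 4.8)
The plan is to combine the integral identity of Theorem~\ref{thm-intbochintro} with the integrated classical Bochner formula and Derdzinski's ``semilinear'' equation for $W^{\pm}$, the only genuinely new ingredient being a quartic algebraic identity special to dimension four. I first record the facts I need about the self-dual and anti-self-dual parts. Since $(M^{4},g)$ is Einstein, each of $W^{+}$ and $W^{-}$ is divergence free, the $W^{+}$ and $W^{-}$ contributions decouple in every quantity below, and all the formulas stated above for $W$ hold verbatim with $W$ replaced by $W^{\pm}$. In particular $W^{\pm}$ satisfies
$$
\Delta W^{\pm} \,=\, \frac{R}{2}\,W^{\pm} + W^{\pm}\ast W^{\pm}
$$
(with $\Delta$ the connection Laplacian and $W^{\pm}\ast W^{\pm}$ the explicit quadratic Derdzinski term); contracting with $W^{\pm}$ gives, exactly as in the derivation of the classical Bochner formula, $W^{\pm}_{ijkl}(W^{\pm}\ast W^{\pm})_{ijkl} = -3\,W^{\pm}_{ijkl}W^{\pm}_{ijpq}W^{\pm}_{klpq}$; and integrating the classical Bochner formula on the closed manifold yields
$$
\int W^{\pm}_{ijkl}W^{\pm}_{ijpq}W^{\pm}_{klpq} \,=\, \frac{1}{3}\int|\nabla W^{\pm}|^{2} + \frac{R}{6}\int|W^{\pm}|^{2}\,.
$$

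Next I use Theorem~\ref{thm-intbochintro} to eliminate the Hessian term. Substituting $\int|\nabla^{2}W^{\pm}|^{2} = \tfrac{5}{3}\int|\Delta W^{\pm}|^{2} - \tfrac{R}{4}\int|\nabla W^{\pm}|^{2}$ into the claimed identity and simplifying, the Proposition becomes equivalent to
$$
\int|\Delta W^{\pm}|^{2} + R\int|\nabla W^{\pm}|^{2} \,=\, \frac{3}{2}\int|W^{\pm}|^{4} - \frac{R^{2}}{4}\int|W^{\pm}|^{2}\,.
$$
To handle the left-hand side I expand, using the equation above,
$$
|\Delta W^{\pm}|^{2} \,=\, \frac{R^{2}}{4}|W^{\pm}|^{2} + R\,W^{\pm}_{ijkl}(W^{\pm}\ast W^{\pm})_{ijkl} + |W^{\pm}\ast W^{\pm}|^{2}\,,
$$
replace the middle contraction by $-3\,W^{\pm}_{ijkl}W^{\pm}_{ijpq}W^{\pm}_{klpq}$, and integrate, using the integrated Bochner formula above to eliminate the cubic term. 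After the cubic terms collapse one is left with
$$
\int|\Delta W^{\pm}|^{2} + R\int|\nabla W^{\pm}|^{2} \,=\, -\frac{R^{2}}{4}\int|W^{\pm}|^{2} + \int|W^{\pm}\ast W^{\pm}|^{2}\,.
$$

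Comparing the last two displays, the whole Proposition reduces to the pointwise identity $|W^{\pm}\ast W^{\pm}|^{2} = \tfrac{3}{2}|W^{\pm}|^{4}$, and this is exactly where dimension four is essential: identifying $W^{\pm}$ with a trace-free symmetric endomorphism of the three-dimensional bundle of (anti-)self-dual two-forms, the Cayley--Hamilton theorem forces $\operatorname{tr}\bigl((W^{\pm})^{4}\bigr) = \tfrac{1}{2}\bigl(\operatorname{tr}(W^{\pm})^{2}\bigr)^{2}$, so every totally contracted quartic expression in $W^{\pm}$ is a fixed multiple of $|W^{\pm}|^{4}$; unwinding this through the normalization relating $W^{\pm}$ as a $(0,4)$-tensor to $W^{\pm}$ as a curvature operator --- equivalently, through the algebraic identities of Lemma~\ref{lem-key1} and Lemma~\ref{lem-key2} --- pins down the constant $\tfrac{3}{2}$. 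Inserting this identity finishes the proof. I expect this last step to be the only delicate one: everything before it is integration by parts and bookkeeping on the integral identities, whereas here one must keep track of every normalization constant so that $\tfrac{3}{2}$ (equivalently the coefficient $\tfrac{5}{12}$ in the statement) comes out exactly.

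An alternative that bypasses Theorem~\ref{thm-intbochintro} is to integrate the second Bochner formula of Theorem~\ref{teo-sbf} directly (which holds for $W^{\pm}$ by the same decoupling), so that $\int|\nabla^{2}W^{\pm}|^{2} + \tfrac{13}{12}R\int|\nabla W^{\pm}|^{2} = 10\int W^{\pm}_{ijkl}W^{\pm}_{ijpq,t}W^{\pm}_{klpq,t}$; two integrations by parts together with the equation for $\Delta W^{\pm}$ turn the cubic--derivative term into $-\tfrac{R}{4}\int W^{\pm}_{ijkl}W^{\pm}_{ijpq}W^{\pm}_{klpq}$ plus a totally contracted quartic term in $W^{\pm}$, the latter again a multiple of $\int|W^{\pm}|^{4}$ by the dimension-four algebra, and the classical Bochner formula disposes of the remaining cubic term. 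Either way, the crux is the same single quartic identity.
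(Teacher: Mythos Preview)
Your overall strategy is sound and runs parallel to the paper's: both routes use Theorem~\ref{thm-intbochintro} to trade $\int|\nabla^{2}W^{\pm}|^{2}$ for $\int|\Delta W^{\pm}|^{2}$, reduce to the companion identity $\int|\Delta W^{\pm}|^{2}+R\int|\nabla W^{\pm}|^{2}=\tfrac{1}{4}\int|W^{\pm}|^{2}(6|W^{\pm}|^{2}-R^{2})$, and then pin this down via a purely algebraic quartic relation for $W^{\pm}$. Your reduction (steps through the display $\int|\Delta W^{\pm}|^{2}+R\int|\nabla W^{\pm}|^{2}=-\tfrac{R^{2}}{4}\int|W^{\pm}|^{2}+\int|W^{\pm}\ast W^{\pm}|^{2}$) is correct, and the Cayley--Hamilton observation that any degree-four scalar invariant of a trace-free symmetric endomorphism of a three-dimensional space is a fixed multiple of the square of the norm is exactly the right structural reason why a pointwise identity $|W^{\pm}\ast W^{\pm}|^{2}=c\,|W^{\pm}|^{4}$ must hold. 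This is arguably cleaner than the paper's route, which reaches its quartic identity through the commutation formula and an integration by parts (Lemma~\ref{lem-quart}) rather than by expanding $|\Delta W^{\pm}|^{2}$ directly.

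The genuine gap is that you never actually determine $c$. Invoking Lemma~\ref{lem-key1} and Lemma~\ref{lem-key2} does not help here: those lemmas concern cubic contractions of $W^{\pm}$ with $\nabla W^{\pm}$, not algebraic quartics in $W^{\pm}$, so they cannot ``pin down'' the constant. What is needed is precisely the kind of direct eigenvalue computation in Derdzinski's frame that the paper carries out in the proof of Lemma~\ref{lem-quart} (there for the different quartic $Q=\tfrac{1}{4}|W^{+}|^{4}$): write $2W^{+}=\lambda\,\omega\otimes\omega+\mu\,\eta\otimes\eta+\nu\,\theta\otimes\theta$, compute the three diagonal coefficients of $(W^{+}\ast W^{+})$ in this basis from the explicit Derdzinski form of $\Delta W^{+}$, and then evaluate the norm. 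Until that calculation is done your argument establishes only that the right-hand side of the Proposition equals $c'\int|W^{\pm}|^{4}-\tfrac{5R^{2}}{12}\int|W^{\pm}|^{2}$ for \emph{some} universal $c'$, not that $c'=\tfrac{5}{2}$. The same remark applies to your alternative via Theorem~\ref{teo-sbf}: the two integrations by parts you sketch again land on a fully contracted quartic whose coefficient must be computed, and the computation is of the same nature and difficulty as the one you deferred.
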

Finally, Theorem \ref{thm-intbochintro}, combined with an improved algebraic integral estimate relating the Hessian and the Laplacian of $W$, yields the following gap result in the form of a Poincar\'e type inequality:
\begin{proposition}\label{thm-gap} Let $(M^{4},g)$ be a four dimensional Einstein manifold with positive scalar curvature $R$. Then
$$
\int|\nabla^{2} W^{\pm}|^{2} \geq \frac{R}{12} \int |\nabla W^{\pm}|^{2} \,,
$$
with equality if and only if $\nabla W^{\pm} \equiv 0$.
\end{proposition}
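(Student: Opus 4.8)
The plan is to deduce the asserted inequality from the integral identity of Theorem~\ref{thm-intbochintro} together with a sharp integral estimate comparing the full Hessian of $W^{\pm}$ with its Laplacian. First I would note that the hypothesis $R>0$ --- which, for an Einstein four dimensional manifold, is equivalent to $\ricc=\tfrac{R}{4}\,g$ with $\tfrac{R}{4}>0$ --- forces $(M^{4},g)$ to be compact by the Bonnet--Myers theorem, so that every integration by parts below is legitimate and Theorem~\ref{thm-intbochintro} applies. The argument then reduces to establishing the ``improved algebraic integral estimate''
\begin{equation*}
\int|\Delta W^{\pm}|^{2}\,\le\,\frac{12}{5}\int|\nabla^{2}W^{\pm}|^{2},
\end{equation*}
with equality if and only if $\nabla W^{\pm}$ is parallel. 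The naive pointwise bound $|\Delta W^{\pm}|^{2}\le 4\,|\nabla^{2}W^{\pm}|^{2}$ (Cauchy--Schwarz in the two derivative indices, in dimension $n=4$) only gives the constant $4$, which is not sufficient; the improvement to $12/5$ is exactly the dimension-four ``algebraic peculiarity'' that must be exploited.

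To get the estimate I would use that $\nabla W^{\pm}$ is very far from being an arbitrary section of $T^{*}M\otimes(\text{algebraic Weyl tensors})$: since on an Einstein manifold $W$, hence $W^{\pm}$, is totally trace free, divergence free and satisfies the second Bianchi identity, the five-tensor $T:=\nabla W^{\pm}$ obeys $T_{ijkl,m}+T_{ijlm,k}+T_{ijmk,l}=0$ and $T_{ijkl,l}=0$, besides having vanishing traces in the first four indices. Differentiating these relations once more, contracting in suitable index pairs, and commuting covariant derivatives --- which on an Einstein four-manifold only produces curvature terms of the schematic form $R\,\nabla W^{\pm}$ and $W\ast\nabla W^{\pm}$, and \emph{not} further second derivatives --- one can, after integrations by parts on the closed manifold $M$, bound $\int|\Delta W^{\pm}|^{2}$ by $\tfrac{12}{5}\int|\nabla^{2}W^{\pm}|^{2}$ up to exact divergences and curvature terms; the latter are then absorbed using the curvature identities already exploited in the proofs of Theorems~\ref{teo-sbf} and~\ref{thm-intbochintro}. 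What makes the constant come out to be $12/5$ rather than some worse value is the precise bookkeeping of the combinatorial coefficients produced by the second Bianchi identity and the four-dimensional trace identities, which ultimately encode the decomposition of $\nabla W^{\pm}$ into its irreducible $SO(4)$ components. Equality forces $\nabla^{2}W^{\pm}$ to be ``pure trace'' in the two derivative slots compatibly with the Bianchi constraints, and one checks this is equivalent to $\nabla W^{\pm}$ being parallel.

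Granting the estimate, the conclusion is immediate. Writing Theorem~\ref{thm-intbochintro} as $\tfrac{5}{3}\int|\Delta W^{\pm}|^{2}=\int|\nabla^{2}W^{\pm}|^{2}+\tfrac{R}{4}\int|\nabla W^{\pm}|^{2}$ and substituting the estimate gives
\begin{equation*}
\int|\nabla^{2}W^{\pm}|^{2}-\frac{R}{12}\int|\nabla W^{\pm}|^{2}\,=\,\frac{5}{9}\Big(\frac{12}{5}\int|\nabla^{2}W^{\pm}|^{2}-\int|\Delta W^{\pm}|^{2}\Big)\,\ge\,0,
\end{equation*}
which is the claimed inequality since $R>0$. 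For the rigidity statement, equality forces equality in the Hessian--Laplacian estimate, hence $\nabla W^{\pm}$ is parallel; then $\nabla^{2}W^{\pm}\equiv 0$ and $\Delta W^{\pm}\equiv 0$, so Theorem~\ref{thm-intbochintro} gives $\tfrac{R}{4}\int|\nabla W^{\pm}|^{2}=0$, whence $\nabla W^{\pm}\equiv 0$ because $R>0$; the converse is trivial. The main obstacle is plainly the Hessian--Laplacian estimate: one must track the combinatorial factors coming from the second Bianchi identity and the four-dimensional trace identities precisely enough to reach the \emph{sharp} constant $12/5$ --- a sub-optimal constant would only yield a weaker Poincar\'e inequality with a worse gap --- and one must correctly identify its equality case.
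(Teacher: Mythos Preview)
Your overall framework is exactly the paper's: establish the sharp integral estimate $\int|\nabla^{2}W^{\pm}|^{2}\ge\tfrac{5}{12}\int|\Delta W^{\pm}|^{2}$ (equivalently, your $\tfrac{12}{5}$ bound) and combine it with Theorem~\ref{thm-intbochintro}; your displayed algebra is correct and the Bonnet--Myers remark is apt. The gap is that you do not actually prove the estimate. The paper does \emph{not} obtain the constant $\tfrac{12}{5}$ by ``bookkeeping of the combinatorial coefficients produced by the second Bianchi identity and the four-dimensional trace identities.'' Instead (Theorem~\ref{pro-imprhess}) it decomposes $|\nabla^{2}W^{\pm}|^{2}$ pointwise into the squares of the symmetric and antisymmetric parts in the two derivative indices: the symmetric part gives $\tfrac{1}{4}|\Delta W^{\pm}|^{2}$ by the usual trace inequality, while the antisymmetric part satisfies an exact integral identity $\int|W^{\pm}_{ijkl,st}-W^{\pm}_{ijkl,ts}|^{2}=\tfrac{2}{3}\int|\Delta W^{\pm}|^{2}$ (the self-dual version of Lemma~\ref{lem-1}). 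That identity is the heart of the matter, and its proof is not generic tensor calculus: it requires the specific four-dimensional relation $W^{\pm}_{ijkl}W^{\pm}_{jpqt,k}W^{\pm}_{ipqt,l}=-\tfrac{1}{2}W^{\pm}_{ijkl}W^{\pm}_{ijpq,t}W^{\pm}_{klpq,t}$ of Lemma~\ref{lem-key1} (which in turn uses the harmonic Weyl condition and Derdzinski's quaternionic frame), combined with Corollary~\ref{cor-d2} and equation~\eqref{eq-lapsel}. Your sketch never isolates this mechanism.

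The equality case is a more serious gap. The assertion that ``one checks'' pure-trace-in-the-derivative-slots is equivalent to $\nabla W^{\pm}$ being parallel is precisely where the difficulty lies. Equality in the trace inequality only gives $W^{\pm}_{ijkl,st}+W^{\pm}_{ijkl,ts}=\tfrac{1}{2}(\Delta W^{\pm}_{ijkl})\delta_{st}$ pointwise, which says nothing directly about the antisymmetric part and is far from $\nabla^{2}W^{\pm}\equiv 0$. The paper (equality case of Theorem~\ref{pro-imprhess}) takes the divergence of this identity, contracts with $\nabla W^{\pm}$, and invokes the second Bochner formula of Theorem~\ref{teo-sbf} to derive the pointwise constraint $W^{\pm}_{ijkl}W^{\pm}_{ijpq,t}W^{\pm}_{klpq,t}=\tfrac{7}{60}R\,|\nabla W^{\pm}|^{2}$. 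Since $R>0$, this forces $\tfrac{7}{120}R$ to be an eigenvalue of $W^{\pm}$ on $\Lambda^{2}$ wherever $\nabla W^{\pm}\ne 0$; a further argument shows this eigenvalue must be the top one $\nu^{\pm}$, yielding $\nu^{\pm}<R/6$ and hence positive isotropic curvature on a dense set. By real analyticity of Einstein metrics this holds everywhere, and one concludes via the Micallef--Wang rigidity theorem for Einstein manifolds with positive isotropic curvature. This geometric input is essential when $R>0$ and is absent from your proposal.
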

The compactness of $M$, in the previous statements, is required only to guarantee the validity of some integration by parts argument, and thus could be extended to the negative or Ricci flat cases under suitable decay assumptions at infinity.

\

The paper is organized in the following sections:

\tableofcontents

\

\section{Definitions and notations} \label{sec-def}

The Riemann curvature
operator of an oriented Riemannian manifold $(M^n,g)$ is defined by
$$
\mathrm{R}(X,Y)Z=\nabla_{X}\nabla_{Y}Z-\nabla_{Y}\nabla_{X}Z-\nabla_{[X,Y]}Z\,.
$$
Throughout the article, the Einstein convention of summing over the repeated indices will be adopted. In a local coordinate system the components of the $(1, 3)$-Riemann
curvature tensor are given by
$R^{l}_{ijk}\tfrac{\partial}{\partial
  x^{l}}=\mathrm{R}\big(\tfrac{\partial}{\partial
  x^{j}},\tfrac{\partial}{\partial
  x^{k}}\big)\tfrac{\partial}{\partial x^{i}}$ and we denote  by $Riem$ its $(0,4)$ version with components by
$R_{ijkl}=g_{im}R^{m}_{jkl}$. The Ricci tensor is obtained by the contraction
$R_{ik}=g^{jl}R_{ijkl}$ and $R=g^{ik}R_{ik}$ will
denote the scalar curvature. The so called Weyl tensor is then
defined by the following decomposition formula in dimension $n\geq 3$,
\begin{eqnarray}
\label{Weyl}
W_{ijkl}  & = & R_{ijkl} \, - \, \frac{1}{n-2} \, (R_{ik}g_{jl}-R_{il}g_{jk}
+R_{jl}g_{ik}-R_{jk}g_{il})  \nonumber \\
&&\,+\frac{R}{(n-1)(n-2)} \,
(g_{ik}g_{jl}-g_{il}g_{jk})\, \, .
\end{eqnarray}
The Weyl tensor shares the symmetries of the curvature
tensor. Moreover, as it can be easily seen by the formula above, all of its contractions with the metric are zero, i.e. $W$ is totally trace-free. In dimension three, $W$ is identically zero on every Riemannian manifold, whereas, when $n\geq 4$, the vanishing of the Weyl tensor is
a relevant condition, since it is  equivalent to the local
  conformal flatness of $(M^n,g)$. We also recall that in dimension $n=3$,  local conformal
  flatness is equivalent to the vanishing of the Cotton tensor
\begin{equation}\label{def_cot}
C_{ijk} =  R_{ij,k} - R_{ik,j}  -
\frac{1}{2(n-1)}  \big( R_k  g_{ij} -  R_j
g_{ik} \big)\,,
\end{equation}
where $R_{ij,k}=\nabla_k R_{ij}$ and $R_k=\nabla_k R$ denote, respectively, the components of the covariant derivative of the Ricci tensor and of the differential of the scalar curvature.
By direct computation, we can see that the Cotton tensor $C$
satisfies the following symmetries
\begin{equation}\label{CottonSym}
C_{ijk}=-C_{ikj},\,\quad\quad C_{ijk}+C_{jki}+C_{kij}=0\,,
\end{equation}
moreover it is totally trace-free,
\begin{equation}\label{CottonTraces}
g^{ij}C_{ijk}=g^{ik}C_{ijk}=g^{jk}C_{ijk}=0\,,
\end{equation}
by its skew--symmetry and Schur lemma.  Furthermore, it satisfies
\begin{equation}\label{eq_nulldivcotton}
C_{ijk,i} = 0,
\end{equation}
see for instance \cite[Equation 4.43]{catmasmonrig}. We recall that, for $n\geq 4$,  the Cotton tensor can also be defined as one of the possible divergences of the Weyl tensor:
\begin{equation}\label{def_Cotton_comp_Weyl}
C_{ijk}=\pa{\frac{n-2}{n-3}}W_{tikj, t}=-\pa{\frac{n-2}{n-3}}W_{tijk, t}.
\end{equation}
A computation shows that the two definitions coincide (see e.g. \cite{besse}).

We say that a $n$-dimensional, $n\geq 3$, Riemannian manifold $(M^{n},g)$ is an {\em Einstein manifold} if the Ricci tensor satisfies
$$
\ricc \,=\, \lambda g \,,
$$
for some $\lambda\in\erre$. In particular $R=n\lambda \in\erre$ and the Cotton tensor $C$ vanishes. If $n\geq 4$, equation \eqref{def_Cotton_comp_Weyl} implies that the divergence of the Weyl tensor and thus of the Riemann tensor are identically null, i.e.
\begin{equation}\label{harmall}
W_{tijk,t} \,=\, 0\,, \quad R_{tijk,t}=0
\end{equation}
on every Einstein manifold. Manifolds satisfying these curvature conditions are said to have {\em harmonic Weyl curvature} or {\em harmonic curvature}, respectively. Note that, from the decomposition of the curvature tensor, one has
\begin{equation}\label{RiemannEinstein}
  R_{ijkt} = W_{ijkt} + \frac{R}{n(n-1)}\pa{g_{ik}g_{jt}-g_{it}g_{jk}}.
\end{equation}
The Hessian $\nabla^{2}$ of some tensor $T$ of local components $T_{i_{1}\dots i_{k}}^{j_{1}\dots j_{l}}$ will be 
$$
(\nabla^{2}T)_{pq}=\nabla_{q}\nabla_{p}T_{i_{1}\dots i_{k}}^{j_{1}\dots j_{l}}=T_{i_{1}\dots i_{k},pq}^{j_{1}\dots j_{l}}
$$ 
and similarly $\nabla^{k}$ for higher derivatives. The (rough) Laplacian of a tensor $T$ is given by $\Delta T_{i_{1}\dots i_{k}}^{j_{1}\dots j_{l}} =g^{pq}T_{i_{1}\dots i_{k},pq}^{j_{1}\dots j_{l}}$. The Riemannian metric induces norms on all the
 tensor bundles, and in coordinates the squared norm is given by 
$$
|T|^{2}=g^{i_{1}m_{1}}\cdots g^{i_{k}m_{k}}
 g_{j_{1}n_{1}}\dots g_{j_{l}n_{l}} T_{i_{1}\dots
   i_{k}}^{j_{1}\dots j_{l}} T_{m_{1}\dots m_{k}}^{n_{1}\dots
   n_{l}}\,.
$$

\

\section{Some algebraic formulas for the Weyl tensor} \label{sec3}

In this section we present some known and new algebraic identities involving the Weyl tensor and its covariant derivative.

\subsection{General dimension $n\geq 4$}
To perform computations, in this subsection, we freely use the method of the moving frame referring to a local orthonormal coframe of the
$n$-dimensional, $n\geq 4$, Riemannian manifold $(M^{n},g)$. If not specified, all indexes will belong to the set $\{1,\ldots,n\}$. 

First of all, a direct consequence of the definition of the Weyl tensor and of the first Bianchi identity for the Riemann curvature tensor is the first Bianchi identity for $W$:
\begin{align}
  &W_{ijkt}+W_{itjk}+W_{iktj}=0.
\end{align}
As far as the first derivatives of $W$ are concerned, we have (see for instance \cite{catmasmonrig})
\begin{lemma}\label{lemma_fake2ndBianchiWeyl} On every $n$-dimensional, $n\geq 4$, Riemannian manifold one has
  \begin{equation}\label{fake2ndBianchiWeyl}
    W_{ijkt, l}+W_{ijlk, t}+W_{ijtl, k}=\frac{1}{n-2}\pa{C_{itl}\delta_{jk}+C_{ilk}\delta_{jt}+C_{ikt}\delta_{jl}-C_{jtl}\delta_{ik}-C_{jlk}\delta_{it}-C_{jkt}\delta_{il}}.
  \end{equation}
\end{lemma}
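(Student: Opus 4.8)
The plan is to derive \eqref{fake2ndBianchiWeyl} directly from the second Bianchi identity for the Riemann tensor by substituting the Weyl decomposition \eqref{Weyl}. First I would write down the second Bianchi identity in the cyclic form
\[
R_{ijkt,l}+R_{ijlk,t}+R_{ijtl,k}=0,
\]
and then replace each $R_{ij\,\cdot\,\cdot}$ by its expression from \eqref{Weyl}, namely $R_{ijkt}=W_{ijkt}+\frac{1}{n-2}(R_{ik}g_{jt}-R_{it}g_{jk}+R_{jt}g_{ik}-R_{jk}g_{it})-\frac{R}{(n-1)(n-2)}(g_{ik}g_{jt}-g_{it}g_{jk})$. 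Since the metric is parallel, differentiating only hits the Ricci and scalar curvature terms, so the left-hand side becomes the desired cyclic sum $W_{ijkt,l}+W_{ijlk,t}+W_{ijtl,k}$ plus a sum of terms of the form $\frac{1}{n-2}(R_{\cdot\cdot,\cdot})\otimes g$ and $\frac{R_{,\cdot}}{(n-1)(n-2)}\,g\otimes g$.

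Next I would collect those Ricci-derivative terms and recognize them, after using the cyclic structure, as contractions of the Cotton tensor. Recall from \eqref{def_cot} that $C_{ijk}=R_{ij,k}-R_{ik,j}-\frac{1}{2(n-1)}(R_k g_{ij}-R_j g_{ik})$; the point is that the combination of $R_{\cdot\cdot,\cdot}g$-terms produced by the three cyclic Bianchi pieces, together with the $R_{,\cdot}\,g\,g$-terms, organizes exactly into the six-term expression on the right-hand side of \eqref{fake2ndBianchiWeyl}. Concretely, after moving the $W$-cyclic sum to the left, I would group the remaining terms as $\delta_{jk}$-coefficient, $\delta_{jt}$-coefficient, $\delta_{jl}$-coefficient (and the three with $i$ and $j$ swapped with a sign), and verify that each coefficient is precisely $\pm\frac{1}{n-2}$ times the appropriate component of $C$. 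The scalar-curvature terms should be absorbed into completing the Cotton tensor via its definition \eqref{def_cot}, with the $\frac{1}{2(n-1)}$ factors matching.

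The main obstacle — really the only delicate point — is the bookkeeping: keeping careful track of the index positions, signs, and symmetrizations coming from the three cyclic terms, and checking that the Schur-type scalar-curvature contributions cancel or combine correctly so that only the Cotton combinations survive. This is a finite, deterministic computation with no conceptual difficulty; I would organize it by first verifying the identity contracted on a single pair of indices (to fix the normalization constant $\frac{1}{n-2}$ against the convention in \eqref{def_Cotton_comp_Weyl}) and then checking the full tensorial identity term by term. Alternatively, one can cite \cite{catmasmonrig} as the excerpt does, but the self-contained derivation above is short and transparent.
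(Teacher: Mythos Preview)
Your plan is correct and is exactly the standard derivation: start from the second Bianchi identity for $Riem$, substitute the decomposition \eqref{Weyl}, and regroup the Ricci- and scalar-curvature derivative terms into Cotton components via \eqref{def_cot}. The paper itself does not give a proof of this lemma at all---it simply states the identity and refers the reader to \cite{catmasmonrig}---so your self-contained computation is a genuine addition rather than a duplication, and the only point requiring care is, as you note, the index and sign bookkeeping.
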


As a consequence we obtain the following identity:

\begin{lemma}\label{lem_GradWeylNorm}
   On every $n$-dimensional, $n\geq 4$, Riemannian manifold one has
  \begin{equation*}
    W_{ijkl, t}W_{ijkt, l} = \frac{1}{2}\abs{\nabla W}^2 - \frac{1}{n-3}\abs{\diver W}^2.
  \end{equation*}
  In particular, on a manifold with harmonic Weyl curvature, one has
   \begin{equation}\label{GradWeylNormEinstein}
    W_{ijkl, t}W_{ijkt, l} = \frac{1}{2}\abs{\nabla W}^2.
  \end{equation}
\end{lemma}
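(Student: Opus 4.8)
The plan is to obtain the identity by contracting the ``fake second Bianchi identity'' \eqref{fake2ndBianchiWeyl} of Lemma~\ref{lemma_fake2ndBianchiWeyl} against a covariant derivative of the Weyl tensor. First I would use the skew-symmetry $W_{ijlk,t}=-W_{ijkl,t}$ to rewrite \eqref{fake2ndBianchiWeyl} as
$$
W_{ijkt,l}-W_{ijkl,t}+W_{ijtl,k}=\frac{1}{n-2}\pa{C_{itl}\delta_{jk}+C_{ilk}\delta_{jt}+C_{ikt}\delta_{jl}-C_{jtl}\delta_{ik}-C_{jlk}\delta_{it}-C_{jkt}\delta_{il}},
$$
and then multiply both sides by $W_{ijkt,l}$, summing over all repeated indices.

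On the left-hand side I expect the first term to give $\abs{\nabla W}^2$, the second $-W_{ijkl,t}W_{ijkt,l}$, and the third, after a cyclic relabelling of the dummy indices $k,t,l$ and use of the skew-symmetries of $W$, again $-W_{ijkl,t}W_{ijkt,l}$; so the left-hand side should collapse to $\abs{\nabla W}^2-2\,W_{ijkl,t}W_{ijkt,l}$.

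On the right-hand side, each Kronecker delta forces a contraction of two indices inside $\nabla W$. Four of the six resulting terms are covariant derivatives of traces of $W$, hence vanish since $W$ is totally trace-free and $\nabla g=0$; the two remaining terms, coming from $C_{ikt}\delta_{jl}$ and $-C_{jkt}\delta_{il}$, each produce a divergence of the Weyl tensor, which by \eqref{def_Cotton_comp_Weyl} is a fixed multiple of the Cotton tensor. Collecting the coefficients should leave the right-hand side equal to $\tfrac{2(n-3)}{(n-2)^2}\abs{C}^2$. Equating both sides yields $W_{ijkl,t}W_{ijkt,l}=\tfrac12\abs{\nabla W}^2-\tfrac{n-3}{(n-2)^2}\abs{C}^2$, and one last use of \eqref{def_Cotton_comp_Weyl}, which gives $\abs{\diver W}^2=\bigl(\tfrac{n-3}{n-2}\bigr)^2\abs{C}^2$, turns this into the claimed formula; the harmonic Weyl case \eqref{GradWeylNormEinstein} follows at once by putting $\diver W=0$.

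The only real difficulty is combinatorial: carrying out the index relabellings consistently — in particular, making sure that the cyclic permutation of $k,t,l$ in the product of two copies of $\nabla W$ is applied correctly — and verifying that each ``trace'' term on the right is a contraction over a pair of slots in which $W$ actually vanishes (as opposed to a pair tied together only by the first Bianchi identity). Once the contractions are organized, the rest is bookkeeping of numerical factors.
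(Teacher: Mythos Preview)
Your proposal is correct and follows essentially the same route as the paper: both contract the ``fake second Bianchi identity'' \eqref{fake2ndBianchiWeyl} against a copy of $\nabla W$, kill four of the six Cotton terms using trace-freeness, and convert the surviving divergences via \eqref{def_Cotton_comp_Weyl}. The only cosmetic difference is that you contract with $W_{ijkt,l}$ whereas the paper contracts with $W_{ijkl,t}$; this shifts which two Cotton terms survive (yours are $C_{ikt}\delta_{jl}$ and $-C_{jkt}\delta_{il}$, the paper's are $C_{ilk}\delta_{jt}$ and $-C_{jlk}\delta_{it}$) and makes the identification of the ``third'' left-hand term with $-A$ explicit via your cyclic relabelling, whereas in the paper that step is absorbed into the final ``immediately implies the thesis''.
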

\begin{proof}
  Using Lemma \ref{fake2ndBianchiWeyl} and the fact that the Weyl tensor is totally trace-free we have
  \begin{align*}
  W_{ijkl, t}W_{ijkt, l} &= W_{ijkl, t}\set{-W_{ijtl, k}-W_{ijlk, t}}\\&+\frac{1}{n-2}W_{ijkl, t}\pa{C_{itl}\delta_{jk}+C_{ilk}\delta_{jt}+C_{ikt}\delta_{jl}-C_{jtl}\delta_{ik}-C_{jlk}\delta_{it}-C_{jkt}\delta_{il}}\\ &=\abs{\nabla W}^2 - W_{ijkl, t}W_{ijtl, k}+\frac{1}{n-2}W_{ijkl, t}\pa{C_{ilk}\delta_{jt}-C_{jlk}\delta_{it}}\\=&\abs{\nabla W}^2 - W_{ijkl, t}W_{ijtl, k}+\frac{2}{n-2}W_{itkl, t}C_{ilk},
  \end{align*}
  which together with \eqref{def_Cotton_comp_Weyl} immediately implies the thesis.
\end{proof}

For the second and third derivatives of $W$, it is known that (see for instance \cite{catmasmonrig}) 
\begin{lemma}\label{lem-comsec} On every $n$-dimensional, $n\geq 4$, Riemannian manifold one has
\begin{align}
  W_{ijkl, st}-W_{ijkl, ts} &= W_{rjkl}R_{rist}+W_{irkl}R_{rjst}+W_{ijrl}R_{rkst}+W_{ijkr}R_{rlst}; \label{SecondDerivWeylusingRiem}\\ W_{ijkl, trs}-W_{ijkl, tsr} &= W_{vjkl, t}R_{virs}+W_{ivkl, t}R_{vjrs}+W_{ijvl, t}R_{vkrs}+W_{ijkv, t}R_{vlrs}+W_{ijkl, v}R_{vtrs}. \label{ThirdDerivWeylusingRiem}
\end{align}
\end{lemma}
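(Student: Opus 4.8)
The plan is to recognize both identities as instances of the Ricci commutation rule for covariant derivatives: the first is this rule applied to the $(0,4)$-tensor $W$, and the second is the same rule applied to the $(0,5)$-tensor $\nabla W$ whose components are $W_{ijkl,t}$. So the strategy is simply to recall the general rule and specialize it, keeping track of signs against the curvature convention fixed in Section~\ref{sec-def}.

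Concretely, for any smooth $(0,p)$-tensor field $T$ with local components $T_{a_1\cdots a_p}$, expanding $\nabla_t\nabla_s T-\nabla_s\nabla_t T$ in a local frame and using the definition of the curvature operator $\mathrm{R}(\partial_s,\partial_t)$ acting on each of the $p$ tensor slots gives, in the conventions of Section~\ref{sec-def} (and working in a local orthonormal coframe, so that the $(1,3)$ and $(0,4)$ forms of the Riemann tensor are interchangeable),
$$
T_{a_1\cdots a_p,\,st}-T_{a_1\cdots a_p,\,ts}\;=\;\sum_{m=1}^{p} R_{r\,a_m\,s\,t}\,T_{a_1\cdots a_{m-1}\,r\,a_{m+1}\cdots a_p}\,,
$$
with $r$ summed. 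This holds on every Riemannian manifold and uses no algebraic property of $T$ beyond tensoriality. Taking $T=W$ (four covariant indices $i,j,k,l$) produces exactly four curvature terms, obtained by replacing in turn $i,j,k,l$ by the dummy index $r$ and pairing with $R_{r\,\cdot\,st}$, which is \eqref{SecondDerivWeylusingRiem}.

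For \eqref{ThirdDerivWeylusingRiem} I would regard $W_{ijkl,t}=(\nabla W)_{ijklt}$ as a $(0,5)$-tensor and commute the two outer derivatives $\nabla_r$ and $\nabla_s$ acting on it, i.e. apply the displayed identity with $T=\nabla W$ and with $(r,s)$ in the roles of $(s,t)$. Now there are five slots $i,j,k,l,t$, hence five curvature terms; four of them reproduce the pattern of the first formula with the extra derivative $\nabla_t$ carried along (yielding $W_{vjkl,t}R_{virs}+W_{ivkl,t}R_{vjrs}+W_{ijvl,t}R_{vkrs}+W_{ijkv,t}R_{vlrs}$), and the fifth, coming from the derivative index $t$ itself, is $W_{ijkl,v}R_{vtrs}$. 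This is precisely \eqref{ThirdDerivWeylusingRiem}.

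The computation is entirely routine; there is no genuine obstacle. The only points deserving a moment's attention are matching the sign and the order of the last two indices of $R$ to the convention $\mathrm{R}(X,Y)Z=\nabla_X\nabla_Y Z-\nabla_Y\nabla_X Z-\nabla_{[X,Y]}Z$ fixed above, and observing that the symmetries and trace-free property of $W$ play no role here — they only become relevant later, when the right-hand sides are simplified. In short, the lemma is the specialization of the standard Ricci commutation identity to $W$ and to $\nabla W$, and a reference such as \cite{catmasmonrig} already records it.
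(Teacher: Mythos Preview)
Your proposal is correct: both identities are exactly the Ricci commutation rule specialized to the $(0,4)$-tensor $W$ and the $(0,5)$-tensor $\nabla W$, and your sign tracking against the convention $(\nabla^2 T)_{pq}=\nabla_q\nabla_p T$ and $R_{ijkl}=g_{im}R^m_{jkl}$ is consistent with the stated formulas. The paper does not actually supply a proof of this lemma---it simply records the identities with a reference to \cite{catmasmonrig}---so there is nothing to compare against here; if anything, your argument is the standard justification the citation points to, and the paper later reproves the general $k$-th order case (Lemma~\ref{LE_commutationKorderWeyl}) via the moving-frame/structure-equation computation, which is just an alternative packaging of the same Ricci identity.
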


Using the definition of the Weyl tensor in equation \eqref{SecondDerivWeylusingRiem}, we obtain
\begin{lemma}\label{lem-comsec} On every $n$-dimensional, $n\geq 4$, Riemannian manifold the following commutation formula holds:
\begin{align}
  W_{ijkl, st}-W_{ijkl, ts} &=W_{rjkl}W_{rist}+W_{irkl}W_{rjst}+W_{ijrl}W_{rkst}+W_{ijkr}W_{rlst} \\ \nonumber &+\frac{1}{n-2}\left[W_{rjkl}\pa{R_{rs}\delta_{it}-R_{rt}\delta_{is}+R_{it}\delta_{rs}-R_{is}\delta_{rt}}\right.\\\nonumber &\qquad\qquad +W_{irkl}\pa{R_{rs}\delta_{jt}-R_{rt}\delta_{js}+R_{jt}\delta_{rs}-R_{js}\delta_{rt}}\\\nonumber
   & \qquad\qquad +W_{ijrl}\pa{R_{rs}\delta_{kt}-R_{rt}\delta_{ks}+R_{kt}\delta_{rs}-R_{ks}\delta_{rt}} \\\nonumber &\left.\qquad\qquad+W_{ijkr}\pa{R_{rs}\delta_{lt}-R_{rt}\delta_{ls}+R_{lt}\delta_{rs}-R_{ls}\delta_{rt}}\right]
  \\\nonumber &-\frac{R}{(n-1)(n-2)}\left[W_{rjkl}\pa{\delta_{rs}\delta_{it}-\delta_{rt}\delta_{is}}+W_{irkl}\pa{\delta_{rs}\delta_{jt}-\delta_{rt}\delta_{js}}\right.\\ \nonumber &\qquad\qquad\qquad\qquad\left.+W_{ijrl}\pa{\delta_{rs}\delta_{kt}-\delta_{rt}\delta_{ks}}+W_{ijkr}\pa{\delta_{rs}\delta_{lt}-\delta_{rt}\delta_{ls}}\right].
\end{align}
In particular, on every four dimensional Einstein manifold one has
\begin{align*}
W_{ijkl,st}-W_{ijkl,ts} =&\, W_{rjkl}W_{rist}+W_{irkl}W_{rjst}+W_{ijrl}W_{rkst}+W_{ijkr}W_{rlst}+\\
&+\frac{R}{12}\big(W_{sjkl}\delta_{it}-W_{tjkl}\delta_{is}+W_{iskl}\delta_{jt}-W_{itkl}\delta_{js}\\
&+W_{ijsl}\delta_{kt}-W_{ijtl}\delta_{ks}+W_{ijks}\delta_{lt}-W_{ijkt}\delta_{ls}\big)\,,
\end{align*}
and
$$
W_{ijkl,si} = W_{irkl}W_{rjsi}+W_{ijrl}W_{rksi}+W_{ijkr}W_{rlsi}+\frac{R}{4}W_{sjkl}\,.
$$
\end{lemma}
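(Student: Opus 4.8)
The plan is to substitute the curvature decomposition \eqref{Weyl} into the Ricci commutation identity \eqref{SecondDerivWeylusingRiem} and then specialize to the four--dimensional Einstein case. Recall that \eqref{SecondDerivWeylusingRiem} writes $W_{ijkl,st}-W_{ijkl,ts}$ as a sum of four terms, one for each slot of $W$, of the shape $W_{rjkl}R_{rist}$. Into each curvature factor $R_{\,\cdot\,\cdot\,st}$ I would insert \eqref{Weyl}: the $W$--part of the four substitutions produces exactly the four quadratic terms $W_{rjkl}W_{rist}+W_{irkl}W_{rjst}+W_{ijrl}W_{rkst}+W_{ijkr}W_{rlst}$, while the Ricci part and the scalar part, once each Kronecker delta is contracted against the appropriate slot of $W$, reproduce line by line the long linear bracket in the statement. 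No special property of $W$ is used at this stage; it is pure bookkeeping, the only care being to keep track of which slot each $\delta$ collapses onto.

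For the Einstein specialization in dimension four I would put $R_{ab}=\frac{R}{4}\delta_{ab}$ in the orthonormal coframe and use $\frac{1}{n-2}=\frac12$, $\frac{R}{(n-1)(n-2)}=\frac{R}{6}$. Each of the four slots then contributes, from the Ricci part, the quantity $\frac{2R}{n(n-2)}$ multiplying a pair such as $W_{sjkl}\delta_{it}-W_{tjkl}\delta_{is}$, and from the scalar part $-\frac{R}{(n-1)(n-2)}$ times the same pair; since $\frac{2}{n(n-2)}-\frac{1}{(n-1)(n-2)}=\frac{1}{n(n-1)}$, which equals $\frac{1}{12}$ for $n=4$, the entire linear part collapses to the stated $\frac{R}{12}$--bracket while the four quadratic terms are untouched.

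For the last identity I would contract, in the formula just obtained, the second covariant--derivative index with the first slot of $W$, i.e.\ set $t=i$ and sum. First note that $W_{ijkl,is}=\nabla_s\big(W_{ijkl,i}\big)=0$, because $W_{ijkl,i}$ is a divergence of $W$ and hence vanishes on an Einstein manifold by \eqref{harmall}; therefore $W_{ijkl,si}=W_{ijkl,si}-W_{ijkl,is}$ is given by the commutator with $t=i$. In the quadratic part the first term becomes $W_{rjkl}W_{risi}$, and $\sum_i W_{risi}=0$ since it is a trace of $W$, while the remaining three are $W_{irkl}W_{rjsi}+W_{ijrl}W_{rksi}+W_{ijkr}W_{rlsi}$. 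In the $\frac{R}{12}$--bracket, three of the eight terms become traces of $W$ (of type $W_{iikl}$, $W_{ijil}$, $W_{ijki}$) and drop, leaving $4W_{sjkl}-W_{sjkl}+W_{jskl}+W_{kjsl}+W_{ljks}$; using the skew--symmetry $W_{jskl}=-W_{sjkl}$ and, via the first Bianchi identity together with the pair symmetry, $W_{kjsl}+W_{ljks}=W_{sjkl}$, the bracket equals $3W_{sjkl}$ and its coefficient becomes $\frac{R}{4}$.

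The main obstacle is organizational rather than conceptual: the general commutation formula carries sixteen ``linear'' terms besides the quadratic ones, and a single sign error or a delta collapsed onto the wrong slot contaminates everything downstream. The only genuinely non--routine algebraic step is the final reduction of the $\frac{R}{12}$--bracket, where the relation $W_{kjsl}+W_{ljks}=W_{sjkl}$ has to be extracted from the first Bianchi identity and the symmetries of the Weyl tensor.
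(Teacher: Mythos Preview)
Your proposal is correct and follows precisely the route indicated in the paper: substitute the decomposition \eqref{Weyl} into the Ricci identity \eqref{SecondDerivWeylusingRiem}, then specialize to the four--dimensional Einstein setting and finally trace with $t=i$. The paper in fact records no details beyond the sentence ``using the definition of the Weyl tensor in equation \eqref{SecondDerivWeylusingRiem}'', so your write--up, including the Bianchi reduction $W_{kjsl}+W_{ljks}=W_{sjkl}$ and the coefficient check $\tfrac{2}{n(n-2)}-\tfrac{1}{(n-1)(n-2)}=\tfrac{1}{n(n-1)}$, supplies more than the paper itself does.
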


The general commutation formula for $k$-th covariant derivatives, $k\geq 3$, is contained in the following lemma which is well known but for the sake of completeness we provide it with a proof.
\begin{lemma}\label{LE_commutationKorderWeyl}
  On every $n$-dimensional, $n\geq 4$, Riemannian manifold, for every $k\in\mathds{N}$, $k\geq 3$, one has
  \begin{align}\label{CommutationWeylKorder}
    W_{\alpha\beta\gamma\delta, i_1\cdots i_{k-1}i_k} - W_{\alpha\beta\gamma\delta, i_1\cdots i_{k}i_{k-1}} &=W_{p\beta\gamma\delta, i_1\cdots i_{k-2}}R_{p\alpha i_{k-1}i_k} + W_{\alpha p \gamma\delta, i_1\cdots i_{k-2}}R_{p\beta i_{k-1}i_k}\\&+W_{\alpha\beta p \delta, i_1\cdots i_{k-2}}R_{p\gamma i_{k-1}i_k}+W_{\alpha\beta\gamma p, i_1\cdots i_{k-2}}R_{p\delta i_{k-1}i_k} \\ &+\sum_{h=1}^{k-2}W_{\alpha\beta\gamma\delta, i_1\cdots j_h\cdots i_{k-2}}R_{j_hi_hi_{k-1}i_k}.
  \end{align}
\end{lemma}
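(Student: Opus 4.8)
The plan is to recognise \eqref{CommutationWeylKorder} as nothing but the Ricci commutation identity applied to the single covariant tensor $\nabla^{k-2}W$ of rank $k+2$. With the convention $T_{\ldots,pq}=\nabla_q\nabla_p T_{\ldots}$ fixed in Section \ref{sec-def}, interchanging the last two subscripts $i_{k-1},i_k$ in $W_{\alpha\beta\gamma\delta,i_1\cdots i_{k-1}i_k}$ is exactly interchanging the two outermost derivatives $\nabla_{i_k}\nabla_{i_{k-1}}$ acting on the tensor $T_{\alpha\beta\gamma\delta i_1\cdots i_{k-2}}:=W_{\alpha\beta\gamma\delta,i_1\cdots i_{k-2}}$, which is meaningful precisely because $k\geq 3$. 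So the first step is to record the general rule: for an arbitrary $(0,m)$-tensor $T$ and any two indices $s,t$,
\[
T_{a_1\cdots a_m,st}-T_{a_1\cdots a_m,ts}=\sum_{r=1}^{m}T_{a_1\cdots a_{r-1}\,p\,a_{r+1}\cdots a_m}\,R_{p\,a_r\,s\,t},
\]
with the same curvature sign convention as in \eqref{SecondDerivWeylusingRiem}. This is classical; I would prove it by induction on $m$, the case $m=1$ being the action of the curvature operator on $1$-forms (obtained from the case of vector fields, i.e.\ the very definition of $\mathrm{R}(X,Y)Z$, by pairing with the duality bracket and using that $\nabla_s\nabla_t-\nabla_t\nabla_s$ annihilates functions, by torsion-freeness), and the inductive step following from the fact that the zeroth-order operator $\nabla_s\nabla_t-\nabla_t\nabla_s$ is a derivation of the tensor product: upon antisymmetrising in $s,t$ the symmetric cross terms produced by the Leibniz rule cancel.

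The second step is just specialisation. Taking $m=k+2$, ordered free indices $(\alpha,\beta,\gamma,\delta,i_1,\ldots,i_{k-2})$, $T_{\alpha\beta\gamma\delta i_1\cdots i_{k-2}}=W_{\alpha\beta\gamma\delta,i_1\cdots i_{k-2}}$ and $(s,t)=(i_{k-1},i_k)$, I split the sum over $r$ into the four terms in which the replaced slot is one of the Weyl indices $\alpha,\beta,\gamma,\delta$ — these are, verbatim, the four explicit curvature terms $W_{p\beta\gamma\delta,\ldots}R_{p\alpha i_{k-1}i_k}+\cdots+W_{\alpha\beta\gamma p,\ldots}R_{p\delta i_{k-1}i_k}$ — and the $k-2$ terms in which the replaced slot is a derivative index $i_h$ with $1\le h\le k-2$; renaming the summation index $p$ to $j_h$ in the latter turns them into $\sum_{h=1}^{k-2}W_{\alpha\beta\gamma\delta,i_1\cdots j_h\cdots i_{k-2}}R_{j_h i_h i_{k-1}i_k}$. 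Adding the two groups is exactly \eqref{CommutationWeylKorder}.

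This argument is entirely formal: there is no analytic content and no genuinely hard step, only the bookkeeping of indices and the care needed to keep the curvature sign aligned with \eqref{SecondDerivWeylusingRiem}. A useful consistency check is that the case $k=3$ — the rank-$5$ instance with a single derivative index $i_1$ — must reduce to \eqref{ThirdDerivWeylusingRiem}, which it does; this also shows that one could alternatively phrase the proof as an induction on $k$ based at \eqref{ThirdDerivWeylusingRiem}, although the direct argument above is cleaner.
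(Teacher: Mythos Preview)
Your proof is correct. You recognise that \eqref{CommutationWeylKorder} is nothing but the Ricci commutation identity applied to the rank-$(k+2)$ tensor $\nabla^{k-2}W$, and you check carefully that the sign and index conventions match those of \eqref{SecondDerivWeylusingRiem}; the consistency check against \eqref{ThirdDerivWeylusingRiem} for $k=3$ is a nice touch.

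The paper takes a different, more hands-on route: it works directly in the moving-frame formalism, writing out $W_{\alpha\beta\gamma\delta,i_1\cdots i_{k-1}}\theta^{i_{k-1}}$ in terms of the connection forms $\theta^p_\alpha$, taking a further exterior derivative, invoking the second structure equation to produce the curvature forms, and then skew-symmetrising. In effect the paper re-derives the Ricci identity in this particular instance rather than quoting it. Your argument is shorter and more conceptual, since it isolates the single general fact being used (that $[\nabla_s,\nabla_t]$ acts as a derivation by curvature on any covariant tensor) and then specialises; the paper's computation buys self-containment within the moving-frame machinery already in play in Section~\ref{sec3}, and makes visible how the $k-2$ extra curvature terms emerge one by one from differentiating the connection-form corrections attached to each intermediate derivative index.
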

\begin{proof}
  The proof of the lemma follows the same lines of Lemma 4.4 in \cite{catmasmonrig}. 
  By definition of covariant derivative we have
  \begin{align*}
    W_{\alpha\beta\gamma\delta, i_1\cdots i_{k-1}}\theta^{i_{k-1}} &= dW_{\alpha\beta\gamma\delta, i_1\cdots i_{k-2}}-W_{p\beta\gamma\delta, i_1\cdots i_{k-2}}\theta^p_{\alpha}-W_{\alpha p\gamma\delta, i_1\cdots i_{k-2}}\theta^p_{\beta}  \\&-W_{\alpha\beta p \gamma, i_1\cdots i_{k-2}}\theta^p_{\delta}-W_{\alpha\beta\gamma p, i_1\cdots i_{k-2}}\theta^p_{\delta} \\ &\underbrace{-W_{\alpha\beta\gamma\delta, p i_2\cdots i_{k-2}\theta^p_{i_1}} -\ldots - W_{\alpha\beta\gamma\delta,  i_1\cdots i_{k-3} p\theta^p_{i_{k-2}}}}_{k-2 \text{ terms}}.
  \end{align*}
  Now we differentiate the previous relation, using the second structure equation and the definition of the curvature forms; after a simple but long computation, simplifying we deduce
  \begin{align}
    W_{\alpha\beta\gamma\delta, i_1\cdots i_{k-1}i_k}\theta^{i_{k-1}}\wedge\theta^{i_k} &=-\frac{1}{2}\pa{W_{p\beta\gamma\delta, i_1\cdots i_{k-2}}R_{p\alpha i_k i_{k-1}} + W_{\alpha p \gamma\delta, i_1\cdots i_{k-2}}R_{p\beta i_k i_{k-1}}}\theta^{i_{k-1}}\wedge\theta^{i_k}\\&-\frac{1}{2}\pa{W_{\alpha\beta p \delta, i_1\cdots i_{k-2}}R_{p\gamma i_k i_{k-1}}+W_{\alpha\beta\gamma p, i_1\cdots i_{k-2}}R_{p\delta i_k i_{k-1}}}\theta^{i_{k-1}}\wedge\theta^{i_k}\\&-\frac{1}{2}\pa{\sum_{h=1}^{k-2}W_{\alpha\beta\gamma\delta, i_1\cdots j_h\cdots i_{k-2}}R_{j_hi_hi_{k-1}i_k}}\theta^{i_{k-1}}\wedge\theta^{i_k}.
  \end{align}
  Skew-symmetrizing the left-hand side, we obtain equation \eqref{CommutationWeylKorder}.
\end{proof}

\subsection{Dimension four} In this subsection we recall  some known identities involving the Weyl tensor and we prove some new formulas involving its covariant derivative. For algebraic reasons, all of them hold {\em only} in dimension four.

First we recall that, if $T=\{T_{ijkl}\}$ is a tensor with the same symmetries of the Riemann tensor  (algebraic curvature tensor), it defines a symmetric operator, $T:\Lambda^{2}\longrightarrow \Lambda^{2}$ on the bundle of two-forms $\Lambda^{2}$ by
\begin{equation}\label{conv}
(T \omega)_{kl} \,:=\,\frac{1}{2}T_{ijkl}\omega_{ij}\,,
\end{equation}
with $\omega\in\Lambda^{2}$. Hence we have that $\lambda$ is an eigenvalue of $T$ if $T_{ijkl}\omega_{ij} = 2\lambda\,\omega_{kl}$, for some $0\neq \omega\in\Lambda^{2}$; note that the operator norm on $\Lambda^{2}$ satisfies $\Vert T\Vert^{2}_{\Lambda^{2}}=\frac{1}{4}|T|^{2}$.

The key feature is that $\Lambda^{2}$, on an oriented Riemannian manifold of dimension four $(M^{4},g)$, decomposes as the sum of two subbundles $\Lambda^{\pm}$, i.e.
\begin{equation}\label{dec}
\Lambda^{2}=\Lambda^{+} \oplus \Lambda^{-}\,.
\end{equation}
These subbundles are by definition the eigenspaces of the Hodge operator
$$
\star:\Lambda^{2}\rightarrow \Lambda^{2}
$$
corresponding respectively to the eigenvalue $\pm 1$. In the literature, sections of $\Lambda^{+}$ are called {\em self-dual} two-forms, whereas sections of $\Lambda^{-}$ are called {\em anti-self-dual} two-forms. Now, since the curvature tensor $Riem$ may be viewed as a map $\mathcal{R}:\Lambda^{2}\ra\Lambda^{2}$, according to \eqref{dec} we have the curvature decomposition
\begin{displaymath}
\mathcal{R}=\left(\begin{array}{c|c}
W^{+}+\frac{R}{12}\,I & \overset{\circ}{Ric} \\
\hline
\overset{\circ}{Ric} & W^{-}+\frac{R}{12}\,I \end{array}\right), \end{displaymath}
where 
$$
W = W^{+} + W^{-}
$$
and the self-dual and anti-self-dual $W^{\pm}$ are trace-free endomorphisms of $\Lambda^{\pm}$, $I$ is the identity map of $\Lambda^{2}$ and $\overset{\circ}{Ric}$ represents the trace-free Ricci curvature $Ric-\frac{R}{4}g$.

Following Derdzinski \cite{derd}, for $x\in M^{4}$, we can choose an oriented orthogonal basis $\omega^{+}, \eta^{+}, \theta^{+}$ (respectively, $\omega^{-}, \eta^{-}, \theta^{-}$) of $\Lambda^{+}_{x}$ (respectively, $\Lambda^{-}_{x}$), consisting of eigenvectors of $W^{\pm}$ such that $|\omega^{\pm}|=|\eta^{\pm}|=|\theta^{\pm}|=\sqrt{2}$ and, at $x$,
\begin{equation}\label{eq-derw}
W^{\pm} \,=\, \frac{1}{2}\big(\lambda^{\pm}\omega^{\pm}\otimes\omega^{\pm}+\mu^{\pm} \eta^{\pm}\otimes\eta^{\pm}+\nu^{\pm}\theta^{\pm}\otimes\theta^{\pm}\big)
\end{equation}
where $\lambda^{\pm}\leq\mu^{\pm}\leq\nu^{\pm}$ are the eigenvalues of $W^{\pm}_{x}$. Since $W^{\pm}$ are trace-free, one has $\lambda^{\pm}+\mu^{\pm}+\nu^{\pm}=0$. By definition, we have 
$$
\Vert W^{\pm}\Vert^{2}_{\Lambda^{2}}=(\lambda^{\pm})^{2}+(\mu^{\pm})^{2}+(\nu^{\pm})^{2}.
$$ 
Since it will be repetedly used later, we recall that the orthogonal basis $\omega^{\pm}, \eta^{\pm}, \theta^{\pm}$ forms a quaternionic structure on $T_{x}M$ (see \cite[Lemma 2]{derd}), namely in some local frame
$$
\omega^{\pm}_{ip}\omega^{\pm}_{pj}\,=\,\eta^{\pm}_{ip}\eta^{\pm}_{pj}\,=\,\theta^{\pm}_{ip}\theta^{\pm}_{pj}\,=\,-\delta_{ij}\,, 
$$
$$
\omega^{\pm}_{ip}\eta^{\pm}_{pj}=\theta^{\pm}_{ij},\quad\quad \eta^{\pm}_{ip}\theta^{\pm}_{pj}=\omega^{\pm}_{ij},\quad\quad \theta^{\pm}_{ip}\omega^{\pm}_{pj}=\eta^{\pm}_{ij}\,.
$$

The following identities on the Weyl tensor in dimension four are known (see \cite{derd} and \cite{jackparker} respectively)

\begin{lemma} On every four dimensional Riemannian manifold, one has
\begin{equation}\label{WeylWeylMetric}
W_{ijkt}W_{ijkl} = \frac{1}{4}|W|^{2}g_{tl} = \Vert W\Vert^{2}_{\Lambda^{2}}g_{tl}
\end{equation}
and
\begin{equation}\label{WWW}
W_{ijkl}W_{ipkq}W_{jplq} = \frac{1}{2} W_{ijkl} W_{ijpq}W_{klpq} .
\end{equation}
\end{lemma}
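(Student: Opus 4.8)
The plan is to prove both \eqref{WeylWeylMetric} and \eqref{WWW} as pointwise algebraic identities: fix $x\in M^{4}$, work in the Derdzinski frame, and insert the normal form \eqref{eq-derw}, i.e. $W^{\pm}=\tfrac12\sum_{a}\lambda^{\pm}_{a}\,\phi^{\pm}_{a}\otimes\phi^{\pm}_{a}$, into $W=W^{+}+W^{-}$, where I write $\phi^{\pm}_{1},\phi^{\pm}_{2},\phi^{\pm}_{3}$ for $\omega^{\pm},\eta^{\pm},\theta^{\pm}$ and $\lambda^{\pm}_{1},\lambda^{\pm}_{2},\lambda^{\pm}_{3}$ for $\lambda^{\pm},\mu^{\pm},\nu^{\pm}$. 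Everything then reduces to contractions of the eigen--two--forms, for which I will use the facts already recalled in the text: $(\phi^{\pm}_{a})_{ij}(\phi^{\pm}_{b})_{ij}$ is a constant multiple of $\delta_{ab}$, each $\phi^{\pm}$ squares to $-\mathrm{Id}$, the multiplication table $\phi^{\pm}_{a}\phi^{\pm}_{b}=-\delta_{ab}\mathrm{Id}+\varepsilon_{abc}\phi^{\pm}_{c}$ holds, and $\phi^{+}_{a},\phi^{-}_{b}$ commute and are mutually orthogonal. The recurring mechanism is that any scalar obtained by contracting three eigenforms cyclically, $(\phi_{a})_{ij}(\phi_{b})_{ip}(\phi_{c})_{jp}$, equals (using only antisymmetry) the matrix trace $\operatorname{tr}(\phi_{a}\phi_{b}\phi_{c})$, which is a constant multiple of $\varepsilon_{abc}$ when all three lie in the same $\Lambda^{\sigma}$ and vanishes when two lie in $\Lambda^{+}$ and one in $\Lambda^{-}$ (expand $\phi^{+}_{a}\phi^{+}_{b}$ and use $\operatorname{tr}\phi^{-}_{c}=\operatorname{tr}(\phi^{+}_{d}\phi^{-}_{c})=0$).

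For \eqref{WeylWeylMetric}: expanding $W_{ijkt}W_{ijkl}$, the two mixed terms $W^{+}_{ijkt}W^{-}_{ijkl}$ and $W^{-}_{ijkt}W^{+}_{ijkl}$ vanish at once, since those factors are contracted over the pair $ij$ on which an eigenform of $W^{+}$ meets one of $W^{-}$, and these are orthogonal. In $W^{+}_{ijkt}W^{+}_{ijkl}$ the sum over eigenforms collapses to the diagonal (again by orthogonality in the $ij$ slot), and then $(\phi^{+}_{a})_{kt}(\phi^{+}_{a})_{kl}=\delta_{tl}$, a restatement of $(\phi^{+})^{2}=-\mathrm{Id}$ together with antisymmetry, gives $W^{+}_{ijkt}W^{+}_{ijkl}=\tfrac14|W^{+}|^{2}\delta_{tl}$, and likewise for $W^{-}$. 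Adding, $W_{ijkt}W_{ijkl}=\tfrac14(|W^{+}|^{2}+|W^{-}|^{2})\delta_{tl}=\tfrac14|W|^{2}\delta_{tl}$, and the recalled relation $\Vert W\Vert^{2}_{\Lambda^{2}}=\tfrac14|W|^{2}$ finishes it. Conceptually this only says that the symmetric $2$-tensor $W_{ipqr}W_{jpqr}$ is pure trace in dimension four, with trace $|W|^{2}$.

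For \eqref{WWW}: substituting \eqref{eq-derw} into each of the three Weyl factors on both sides, the key structural point is that every cubic contraction here factors into two index--disjoint blocks, each of the cyclic shape above. Hence all mixed $W^{+}W^{+}W^{-}$-type terms drop out, while the pure $W^{\sigma}$ terms are computed from $\operatorname{tr}(\phi^{\sigma}_{a}\phi^{\sigma}_{b}\phi^{\sigma}_{c})=c\,\varepsilon_{abc}$: each side collapses to a universal constant times $\sum_{a,b,c}\varepsilon_{abc}^{2}\,\lambda^{\sigma}_{a}\lambda^{\sigma}_{b}\lambda^{\sigma}_{c}=6\,\lambda^{\sigma}\mu^{\sigma}\nu^{\sigma}$, that is, a constant times $\det W^{+}+\det W^{-}$; comparing the two constants produces exactly the factor $\tfrac12$. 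As an alternative, since $W^{+}W^{-}$ vanishes as a composition of operators on $\Lambda^{2}$, the space of cubic $O(4)$-invariants of a Weyl-type tensor in dimension four is one--dimensional, spanned by $\operatorname{tr}((W^{+})^{3})+\operatorname{tr}((W^{-})^{3})$; both sides of \eqref{WWW} are such invariants, so they are proportional, and the ratio is pinned down by a single evaluation (noting e.g. $W_{ijkl}W_{ijpq}W_{klpq}=8\operatorname{tr}_{\Lambda^{2}}\mathcal{W}^{3}=8(\operatorname{tr}((W^{+})^{3})+\operatorname{tr}((W^{-})^{3}))$ for $\mathcal{W}$ the operator induced by $W$ on $\Lambda^{2}$).

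The only real obstacle is the index bookkeeping in \eqref{WWW}: the hard part is to keep straight which index pairs are shared between which pair of Weyl factors, so as to recognise each resulting scalar as a trace $\operatorname{tr}(\phi_{a}\phi_{b}\phi_{c})$; once the contractions are organised this way, the quaternion multiplication table makes everything mechanical. Finally, I note that both identities are purely algebraic and local, so only $\dim M=4$ is used — no curvature hypothesis, in particular not the Einstein condition, enters their proof.
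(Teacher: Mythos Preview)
The paper does not actually prove this lemma; it only records the two identities and cites Derdzinski for \eqref{WeylWeylMetric} and Jack--Parker for \eqref{WWW}. So there is no ``paper's proof'' to compare against, and your proposal is really a standalone verification. Your approach via the Derdzinski normal form \eqref{eq-derw} and the quaternionic multiplication table is the standard and correct route; the argument for \eqref{WeylWeylMetric} is clean and complete as written.

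For \eqref{WWW} your overall strategy is right, but one sentence is inaccurate and hides a step you need to make explicit. You write that ``every cubic contraction here factors into two index--disjoint blocks, each of the cyclic shape above'' and hence each side collapses to a constant times $\sum_{a,b,c}\varepsilon_{abc}^{2}\lambda^{\sigma}_{a}\lambda^{\sigma}_{b}\lambda^{\sigma}_{c}$. This is true for the left-hand side $W_{ijkl}W_{ipkq}W_{jplq}$, but not for the right-hand side: in $W_{ijkl}W_{ijpq}W_{klpq}$ the eigenform contractions are three \emph{inner-product} blocks $\langle\phi_{a},\phi_{b}\rangle\langle\phi_{a},\phi_{c}\rangle\langle\phi_{b},\phi_{c}\rangle\propto\delta_{ab}\delta_{ac}\delta_{bc}$, so the pure $W^{\sigma}$ part reduces to a constant times $\sum_{a}(\lambda^{\sigma}_{a})^{3}$, not to $\sum\varepsilon_{abc}^{2}\lambda_{a}\lambda_{b}\lambda_{c}$. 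Both are proportional to $\lambda^{\sigma}\mu^{\sigma}\nu^{\sigma}$ only because $\lambda^{\sigma}+\mu^{\sigma}+\nu^{\sigma}=0$ (trace-freeness of $W^{\sigma}$), via $\sum_{a}(\lambda_{a})^{3}=3\lambda\mu\nu$; you should state this explicitly, since without it the two sides are genuinely independent cubic invariants. Once that is in place, comparing constants does give the factor $\tfrac12$, and your vanishing argument for all mixed $W^{+}/W^{-}$ terms is correct. Your alternative argument via the one-dimensionality of cubic $O(4)$-invariants of a Weyl-type tensor is also valid and is arguably the cleanest way to finish.
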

\begin{rem} It is easy to see that the two identities holds independently for the self-dual and anti-self-dual part of $W$.
\end{rem}

As far as the covariant derivative of Weyl is concerned, it can be shown that (see again \cite{derd}), locally, one has
\begin{align}\label{eq-derder}
2 \nabla W^{\pm} &= \big(d\lambda^{\pm}\otimes\omega^{\pm}+(\lambda^{\pm}-\mu^{\pm})c^{\pm}\otimes\eta^{\pm}+(\nu^{\pm}-\lambda^{\pm})b^{\pm}\otimes\theta^{\pm}\big)\otimes\omega^{\pm}\\\nonumber
&+\big(d\mu^{\pm}\otimes\eta^{\pm}+(\lambda^{\pm}-\mu^{\pm})c^{\pm}\otimes\omega^{\pm}+(\mu^{\pm}-\nu^{\pm})a^{\pm}\otimes\theta^{\pm}\big)\otimes\eta^{\pm}\\\nonumber
&+\big(d\nu^{\pm}\otimes\theta^{\pm}+(\nu^{\pm}-\lambda^{\pm})b^{\pm}\otimes\omega^{\pm}+(\mu^{\pm}-\nu^{\pm})a^{\pm}\otimes\eta^{\pm}\big)\otimes\theta^{\pm}\,,
\end{align}
for some one forms $a^{\pm},b^{\pm},c^{\pm}$.
By orthogonality,  we get
\begin{equation}\label{eq-nqder}
\Vert\nabla W^{\pm}\Vert^{2}_{\Lambda^{2}} = |d\lambda^{\pm}|^{2}+|d\mu^{\pm}|^{2}+|d\nu^{\pm}|^{2}  +2(\mu^{\pm}-\nu^{\pm})^{2}|a^{\pm}|^{2}+2(\lambda^{\pm}-\nu^{\pm})^{2}|b^{\pm}|^{2}+ 2(\lambda^{\pm}-\mu^{\pm})^{2}|c^{\pm}|^{2}
\end{equation}
It follows from \eqref{eq-derder}, that $g$ has harmonic Weyl curvature, i.e. $\diver(W)=0$, if and only if the following relations (locally) hold (see \cite{derd})
\begin{equation}\label{eq-divz}
\begin{cases}
\lambda_{k} = (\lambda-\mu)\theta_{kl}c_{l}+(\lambda-\nu)\eta_{kl}b_{l} \\
\mu_{k} = (\mu-\lambda)\theta_{kl}c_{l}+(\mu-\nu)\omega_{kl}a_{l} \\
\nu_{k} = (\nu-\lambda)\eta_{kl}b_{l}+(\nu-\mu)\omega_{kl}a_{l} \,,
\end{cases}
\end{equation}
where we recall that $\lambda_{k}=(d\lambda)_{k}$. The next identites will be crucial for the proof of Theorem \ref{teo-sbf}.

\begin{lemma}\label{lem-key1} On every four dimensional Riemannian manifold with harmonic Weyl curvature, one has
$$
W^{\pm}_{ijkl}W^{\pm}_{jpqt,k}W^{\pm}_{ipqt,l} = -\frac{1}{2} W^{\pm}_{ijkl}W^{\pm}_{ijpq,t}W^{\pm}_{klpq,t}\,.
$$
Moreover, one has
$$
W_{ijkl}W_{jpqt,k}W_{ipqt,l} = -\frac{1}{2} W_{ijkl}W_{ijpq,t}W_{klpq,t} \,.
$$
\end{lemma}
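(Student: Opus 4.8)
The plan is to deduce the statement for the full Weyl tensor $W$ from the two statements for the self-dual and anti-self-dual parts $W^\pm$, and to prove the latter by a direct computation in Derdzinski's canonical frame. For the reduction, write $W=W^++W^-$ and expand both trilinear contractions; every ``mixed'' term vanishes. On the right-hand side $W_{ijkl}W_{ijpq,t}W_{klpq,t}$ this is because any algebraic contraction in which a full index pair of a self-dual factor is paired with a full index pair of an anti-self-dual factor is zero, since $\langle\Lambda^{+},\Lambda^{-}\rangle=0$; checking the three shared pairs of that pattern disposes of all mixed terms. On the left-hand side $W_{ijkl}W_{jpqt,k}W_{ipqt,l}$ the same mechanism kills every term in which the pair $(qt)$ is shared by one self-dual and one anti-self-dual derivative factor, and the one surviving mixed pattern, of the form $W^{+}_{ijkl}W^{-}_{jpqt,k}W^{-}_{ipqt,l}$, reduces — after contracting $(qt)$ and using the quaternionic relations — to a multiple of $\mathrm{tr}(\widehat{\phi}\,\widehat{\psi}\,\widehat{\psi})$ with $\phi\in\Lambda^{+}$, $\psi\in\Lambda^{-}$ viewed as skew endomorphisms of $TM$, which vanishes because $\mathrm{tr}\,\widehat{\phi}=0$ and $\mathrm{tr}(\widehat{\phi}\,\widehat{\psi})=-\langle\phi,\psi\rangle=0$. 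Hence it suffices to establish the identity for $W^{\pm}$.

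For the $W^{\pm}$ identity I would work at a fixed point and substitute the canonical expressions \eqref{eq-derw} for $W^{\pm}$ and \eqref{eq-derder} for $\nabla W^{\pm}$ into both sides. Using orthogonality of $\omega^{\pm},\eta^{\pm},\theta^{\pm}$ and their quaternionic multiplication rules, every contraction collapses to an explicit polynomial in the eigenvalues $\lambda^{\pm}\le\mu^{\pm}\le\nu^{\pm}$ and in the one-forms $d\lambda^{\pm},d\mu^{\pm},d\nu^{\pm},a^{\pm},b^{\pm},c^{\pm}$; the harmonic-Weyl relations \eqref{eq-divz} are then used to eliminate $d\lambda^{\pm},d\mu^{\pm},d\nu^{\pm}$ in favour of $a^{\pm},b^{\pm},c^{\pm}$, after which one checks that the two sides are the same polynomial. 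A useful consistency check along the way: since $\nabla_{m}W^{\pm}$ is, for each fixed $m$, again a self-dual trace-free algebraic Weyl-type tensor, identity \eqref{WWW} applies to $W^{\pm}+s\,\nabla_{m}W^{\pm}$ for every $s$, and comparing the $s^{2}$-coefficients (after summing over $m$ and using that both contraction patterns in \eqref{WWW} are symmetric under permuting their three factors) yields the companion identity $W^{\pm}_{ijkl}W^{\pm}_{ipkq,m}W^{\pm}_{jplq,m}=\tfrac12\,W^{\pm}_{ijkl}W^{\pm}_{ijpq,m}W^{\pm}_{klpq,m}$, whose right-hand side is exactly the one appearing in the lemma. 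What remains is then to show that on a four-dimensional harmonic-Weyl manifold the two left-hand contraction patterns coincide up to a sign, which accounts for the factor $-\tfrac12$ and can be carried out either by a routine but lengthy manipulation using the second Bianchi identity for $W^{\pm}$ — valid here because $\diver W^{\pm}=0$, so that \eqref{fake2ndBianchiWeyl} gives $W^{\pm}_{ijkt,l}+W^{\pm}_{ijlk,t}+W^{\pm}_{ijtl,k}=0$ — together with repeated use of the trace-freeness of $W^{\pm}$, or by the representation-theoretic argument below.

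I expect the main obstacle to be organizational: keeping the combinatorial coefficients straight through the substantial frame computation, and being systematic about which vanishing lemma is invoked for each mixed term. Conceptually, a pointwise proportionality of the two cubic expressions can hold at all, and the harmonic-Weyl hypothesis is essential, because under $\diver W^{\pm}=0$ the covariant derivative $\nabla W^{\pm}$ is confined to a single irreducible $SO(4)$-summand of $T^{*}M\otimes\mathcal{W}^{\pm}$ (the one on which $d\lambda^{\pm},d\mu^{\pm},d\nu^{\pm}$ are determined by $a^{\pm},b^{\pm},c^{\pm}$); on that summand the space of $SO(4)$-invariant trilinear forms in $(W^{\pm},\nabla W^{\pm},\nabla W^{\pm})$ symmetric in the last two arguments is one-dimensional, so the two sides of the lemma are forced to be proportional and the constant $-\tfrac12$ is determined by evaluating both on a single nonzero choice of $\nabla W^{\pm}$. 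Dropping harmonicity enlarges this space of invariants and the identity fails.
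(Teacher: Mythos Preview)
Your primary strategy—expand both sides in Derdzinski's eigenframe via \eqref{eq-derw} and \eqref{eq-derder}, reduce to polynomials in $\lambda,\mu,\nu,d\lambda,d\mu,d\nu,a,b,c$, impose \eqref{eq-divz}, and match—is exactly what the paper does; your treatment of the mixed terms in the passage to the full Weyl tensor is likewise the paper's argument in slightly different language (the paper writes $2W^{+}_{jpqt,k}W^{+}_{ipqt,l}=A_{kl}\delta_{ij}+B_{kl}\omega^{+}_{ij}+C_{kl}\eta^{+}_{ij}+D_{kl}\theta^{+}_{ij}$ and pairs this against $W^{-}_{ijkl}$, which is your quaternionic observation made explicit).

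Where you diverge is in the side remarks. Your polarization trick—applying \eqref{WWW} to the self-dual algebraic Weyl tensor $W^{\pm}+s\,\nabla_{m}W^{\pm}$ and reading off the $s^{2}$-coefficient, using that both contraction patterns in \eqref{WWW} are totally symmetric in their three factors—yields the companion identity $W^{\pm}_{ijkl}W^{\pm}_{ipkq,m}W^{\pm}_{jplq,m}=\tfrac12\,W^{\pm}_{ijkl}W^{\pm}_{ijpq,m}W^{\pm}_{klpq,m}$ for free. The paper proves this separately, as its next lemma, by a second full eigenframe computation of comparable length; your route is cleaner here and makes transparent why that half needs no harmonic-Weyl hypothesis. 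On the other hand, the step you then defer—that the two left-hand contraction patterns differ exactly by a sign under $\diver W^{\pm}=0$—is precisely the content that genuinely requires \eqref{eq-divz}, and it is what the paper establishes by the frame computation. Your representation-theoretic heuristic (that under the divergence constraint $\nabla W^{\pm}$ sits in a single $SO(4)$-irreducible, on which the relevant invariant is unique) is correct, but to turn it into a proof you still owe the one-dimensionality count and a sample evaluation to pin down the constant; and the alternative ``routine Bianchi manipulation'' you allude to is not, in practice, shorter than the direct computation you have already committed to.
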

\begin{proof} First we prove the self-dual case (the anti-self-dual case is very similar), namely, we show that, if the $g$ has (half) harmonic Weyl curvature, then
\begin{equation}\label{gianni}
W^{+}_{ijkl}W^{+}_{jpqt,k}W^{+}_{ipqt,l} = -\frac{1}{2} W^{+}_{ijkl}W^{+}_{ijpq,t}W^{+}_{klpq,t}\,.
\end{equation}
To simplify the notation, we suppress the $+$ symbol on the eigenvectors and eingenvalues. From \eqref{eq-derder}, we have at some point
\begin{align*}
2 W^{+}_{ijkl} =&\,\, \lambda\,\omega_{ij}\omega_{kl}+\mu\,\eta_{ij}\eta_{kl}+\nu\,\theta_{ij}\theta_{kl}\,, \\
2 W^{+}_{ijpq,t} =&\,\, \Big(\lambda_{t}\omega_{pq}+(\lambda-\mu)c_{t}\eta_{pq}+(\nu-\lambda)b_{t}\theta_{pq}\Big)\omega_{ij}\\
&+ \Big(\mu_{t}\eta_{pq}+(\lambda-\mu)c_{t}\omega_{pq}+(\mu-\nu)a_{t}\theta_{pq}\Big)\eta_{ij}\\
&+ \Big(\nu_{t}\theta_{pq}+(\nu-\lambda)b_{t}\omega_{pq}+(\mu-\nu)a_{t}\eta_{pq}\Big)\theta_{ij}\,,\\
2 W^{+}_{ipqt,l} =&\,\, \Big(\lambda_{l}\omega_{qt}+(\lambda-\mu)c_{l}\eta_{qt}+(\nu-\lambda)b_{l}\theta_{qt}\Big)\omega_{ip}\\
&+ \Big(\mu_{l}\eta_{qt}+(\lambda-\mu)c_{l}\omega_{qt}+(\mu-\nu)a_{l}\theta_{qt}\Big)\eta_{ip}\\
&+ \Big(\nu_{l}\theta_{qt}+(\nu-\lambda)b_{l}\omega_{qt}+(\mu-\nu)a_{l}\eta_{qt}\Big)\theta_{ip}\,.
\end{align*}
By orthogonality and the fact that $|\omega|^{2}=|\eta|^{2}=|\theta|^{2}=2$, we get
\begin{align*}
2W^{+}_{ijkl}W^{+}_{ijpq,t} =&\,\, 2\lambda\, \omega_{kl}\Big(\lambda_{t}\omega_{pq}+(\lambda-\mu)c_{t}\eta_{pq}+(\nu-\lambda)b_{t}\theta_{pq}\Big)\\
&+2\mu\,\eta_{kl}\Big(\mu_{t}\eta_{pq}+(\lambda-\mu)c_{t}\omega_{pq}+(\mu-\nu)a_{t}\theta_{pq}\Big)\\
&+2\nu\,\theta_{kl}\Big(\nu_{t}\theta_{pq}+(\nu-\lambda)b_{t}\omega_{pq}+(\mu-\nu)a_{t}\eta_{pq}\Big)\,.
\end{align*}
Note that the coefficient 2 on the left-hand side is due to the convention \eqref{conv}. Hence
\begin{align*}
W^{+}_{ijkl}W^{+}_{ijpq,t}W^{+}_{klpq,t} =&\,\, 4\lambda|\lambda_{t}\omega_{pq}+(\lambda-\mu)c_{t}\eta_{pq}+(\nu-\lambda)b_{t}\theta_{pq}|^{2}\\
&+4\mu|\mu_{t}\eta_{pq}+(\lambda-\mu)c_{t}\omega_{pq}+(\mu-\nu)a_{t}\theta_{pq}|^{2}\\
&+4\nu|\nu_{t}\theta_{pq}+(\nu-\lambda)b_{t}\omega_{pq}+(\mu-\nu)a_{t}\eta_{pq}|^{2}
\end{align*}
A simple computation, using the fact that $\lambda+\mu+\nu=0$, implies
\begin{align} \label{eqrhs}
\frac{1}{8}\,W^{+}_{ijkl}W^{+}_{ijpq,t}W^{+}_{klpq,t} =& \,\, \lambda |d\lambda|^{2}+\mu|d\mu|^{2}+\nu|d\nu|^{2}\\\nonumber
&-\lambda(\mu-\nu)^{2}|a|^{2}-\mu(\nu-\lambda)^{2}|b|^{2}-\nu(\lambda-\mu)^{2}|c|^{2}\,.
\end{align}
We note that this formula holds on every four dimensional Riemannian manifold. Concerning the left-hand side of \eqref{gianni}, using the quaternionic structure one has
\begin{align*}
4W^{+}_{ijkl}W^{+}_{ipqt,l} =& -\lambda\Big(\lambda_{l}\omega_{qt}+(\lambda-\mu)c_{l}\eta_{qt}+(\nu-\lambda)b_{l}\theta_{qt}\Big)\omega_{kl}\delta_{jp}\\
&-\lambda\Big(\mu_{l}\eta_{qt}+(\lambda-\mu)c_{l}\omega_{qt}+(\mu-\nu)a_{l}\theta_{qt}\Big)\omega_{kl}\theta_{jp}\\
&+\lambda\Big(\nu_{l}\theta_{qt}+(\nu-\lambda)b_{l}\omega_{qt}+(\mu-\nu)a_{l}\eta_{qt}\Big)\omega_{kl}\eta_{jp}\\
&+\mu\Big(\lambda_{l}\omega_{qt}+(\lambda-\mu)c_{l}\eta_{qt}+(\nu-\lambda)b_{l}\theta_{qt}\Big)\eta_{kl}\theta_{jp}\\
&-\mu\Big(\mu_{l}\eta_{qt}+(\lambda-\mu)c_{l}\omega_{qt}+(\mu-\nu)a_{l}\theta_{qt}\Big)\eta_{kl}\delta_{jp}\\
&-\mu\Big(\nu_{l}\theta_{qt}+(\nu-\lambda)b_{l}\omega_{qt}+(\mu-\nu)a_{l}\eta_{qt}\Big)\eta_{kl}\omega_{jp}\\
&-\nu\Big(\lambda_{l}\omega_{qt}+(\lambda-\mu)c_{l}\eta_{qt}+(\nu-\lambda)b_{l}\theta_{qt}\Big)\theta_{kl}\eta_{jp}\\
&+\nu\Big(\mu_{l}\eta_{qt}+(\lambda-\mu)c_{l}\omega_{qt}+(\mu-\nu)a_{l}\theta_{qt}\Big)\theta_{kl}\omega_{jp}\\
&-\nu\Big(\nu_{l}\theta_{qt}+(\nu-\lambda)b_{l}\omega_{qt}+(\mu-\nu)a_{l}\eta_{qt}\Big)\theta_{kl}\delta_{jp} \,.
\end{align*}
Since $W^{+}$ is trace free, a computation shows 
\begin{align*}
2W^{+}_{ijkl}&W^{+}_{ipqt,l}W^{+}_{jpqt,k} =\\
&+4 \lambda\Big(\mu_{k}\eta_{qt}+(\lambda-\mu)c_{k}\omega_{qt}+(\mu-\nu)a_{k}\theta_{qt}\Big) \Big(\nu_{l}\theta_{qt}+(\nu-\lambda)b_{l}\omega_{qt}+(\mu-\nu)a_{l}\eta_{qt}\Big)\omega_{kl}\\
&+4\mu\Big(\nu_{k}\theta_{qt}+(\nu-\lambda)b_{k}\omega_{qt}+(\mu-\nu)a_{k}\eta_{qt}\Big)\Big(\lambda_{l}\omega_{qt}+(\lambda-\mu)c_{l}\eta_{qt}+(\nu-\lambda)b_{l}\theta_{qt}\Big)\eta_{kl}\\
&+4\nu\Big(\lambda_{k}\omega_{qt}+(\lambda-\mu)c_{k}\eta_{qt}+(\nu-\lambda)b_{k}\theta_{qt}\Big)\Big(\mu_{l}\eta_{qt}+(\lambda-\mu)c_{l}\omega_{qt}+(\mu-\nu)a_{l}\theta_{qt}\Big)\theta_{kl}\\
=&\,\, 8\Big(\lambda_{k}D_{k}+\mu_{k}E_{k}+\nu_{k}F_{k} \\
&\quad+\lambda(\lambda-\mu)(\lambda-\nu)\omega_{kl}c_{k}b_{l}+\mu(\mu-\nu)(\lambda-\mu)\eta_{kl}a_{k}c_{l}+\nu(\nu-\lambda)(\mu-\nu)\theta_{kl}b_{k}a_{l}\Big)\,,
\end{align*}
where
\begin{align*}
D_{k} &=\nu(\lambda-\mu)\theta_{kl}c_{l}-\mu(\nu-\lambda)\eta_{kl}b_{l} \,,\\
E_{k} &= \lambda(\mu-\nu)\omega_{kl}a_{l}-\nu(\lambda-\mu)\theta_{kl}c_{l} \,,\\
F_{k} &= \mu(\nu-\lambda)\eta_{kl}b_{l}-\lambda(\mu-\nu)\omega_{kl}a_{l} \,.
\end{align*}
Since $g$ has harmonic Weyl curvature, from \eqref{eq-divz}, one has
\begin{align*}
D_{k} &=-\lambda\,\lambda_{k}-\mu(\lambda-\mu)\theta_{kl}c_{l}+\nu(\nu-\lambda)\eta_{kl}b_{l} \,,\\
E_{k} &= -\mu\,\mu_{k}-\mu(\mu-\nu)\omega_{kl}a_{l}+\lambda(\lambda-\mu)\theta_{kl}c_{l} \,,\\
F_{k} &= -\nu\,\nu_{k}-\lambda(\nu-\lambda)\eta_{kl}b_{l}+\mu(\mu-\nu)\omega_{kl}a_{l} \,.
\end{align*}
Substituting in the expression above, using again \eqref{eq-divz} and the fact that $|\omega_{kl}a_{l}|^{2}=|a|^{2}$, we get
\begin{align*}
2W^{+}_{ijkl}W^{+}_{ipqt,l}W^{+}_{jpqt,k} &= 8 \Big(\lambda |d\lambda|^{2}+\mu|d\mu|^{2}+\nu|d\nu|^{2}\\\nonumber
&\quad\quad-\lambda(\mu-\nu)^{2}|a|^{2}-\mu(\nu-\lambda)^{2}|b|^{2}-\nu(\lambda-\mu)^{2}|c|^{2}\Big)\,.
\end{align*}
Comparing with \eqref{eqrhs}, we obtain the first formula stated in this lemma, namely
$$
W^{+}_{ijkl}W^{+}_{ipqt,l}W^{+}_{jpqt,k} = -\frac{1}{2}W^{+}_{ijkl}W^{+}_{ijpq,t}W^{+}_{klpq,t} \,.
$$
As we have already observed, the proof for $W^{-}$ is the same. To conclude, we have to show the identity for the full Weyl tensor
$$
W_{ijkl}W_{jpqt,k}W_{ipqt,l} = -\frac{1}{2} W_{ijkl}W_{ijpq,t}W_{klpq,t} \,.
$$
Clearly, since the covariant derivative decomposes orthogonally 
$$
\nabla W = \nabla W^{+} + \nabla W^{-}\,,
$$ one has
$$
W_{ijkl}W_{ijpq,t}W_{klpq,t} = W^{+}_{ijkl}W^{+}_{ijpq,t}W^{+}_{klpq,t}+W^{-}_{ijkl}W^{-}_{ijpq,t}W^{-}_{klpq,t}
$$
and
\begin{align*}
W_{ijkl}W_{jpqt,k}W_{ipqt,l} =&\, W^{+}_{ijkl}W^{+}_{jpqt,k}W^{+}_{ipqt,l}+W^{-}_{ijkl}W^{-}_{jpqt,k}W^{-}_{ipqt,l} \\
& + W_{ijkl}^{+}W^{-}_{jpqt,k}W^{-}_{ipqt,l}+W_{ijkl}^{-}W^{+}_{jpqt,k}W^{+}_{ipqt,l}\,.
\end{align*}
Hence, it remains to show that
\begin{equation}\label{eq-mix}
W_{ijkl}^{\pm}W^{\mp}_{jpqt,k}W^{\mp}_{ipqt,l} = 0 \,.
\end{equation}
In fact, one has
\begin{align*}
2 W^{+}_{jpqt,k}W^{+}_{ipqt,l} =&\,\, A_{kl}\delta_{ij}+B_{kl}\omega^{+}_{ij}+C_{kl}\eta^{+}_{ij}+D_{kl}\theta^{+}_{ij}\,,
\end{align*}
for some two tensors $A,B,C,D$. Since $\omega^{-},\eta^{-},\theta^{-}$ are orthogonal to $\omega^{+},\eta^{+},\theta^{+}$ we get the result and this concludes the proof of the lemma.
\end{proof}
Finally, we have the following identity
\begin{lemma}\label{lem-key2} On every four dimensional Riemannian manifold, one has
$$
W_{ijkl}W_{ipkq,t}W_{jplq,t} = \frac{1}{2} W_{ijkl} W_{ijpq,t}W_{klpq,t}\,.
$$
\end{lemma}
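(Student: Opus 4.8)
The plan is to obtain Lemma \ref{lem-key2} by polarizing the cubic algebraic identity \eqref{WWW}. The starting observation is that \eqref{WWW} is of a purely algebraic nature: it holds for \emph{every} totally trace-free tensor $T=\{T_{ijkl}\}$ with the symmetries of a curvature tensor on a four dimensional inner product space — the set $\mathcal W$ of such tensors being a real vector space — since every such $T$ arises as the Weyl tensor, at a point, of a suitable metric. Accordingly, introduce the trilinear forms
$$
F(A,B,C):=A_{ijkl}B_{ipkq}C_{jplq},\qquad G(A,B,C):=A_{ijkl}B_{ijpq}C_{klpq}
$$
for $A,B,C\in\mathcal W$, so that \eqref{WWW} reads $F(W,W,W)=\tfrac12\,G(W,W,W)$.

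The first point I would check is that $F$ and $G$ are invariant under every permutation of $A,B,C$. Invariance under the swap $B\leftrightarrow C$ follows at once by renaming dummy indices ($i\leftrightarrow j$, $k\leftrightarrow l$ in $F$; $i\leftrightarrow k$, $j\leftrightarrow l$ in $G$) and using the pair antisymmetries; a second, slightly more delicate relabeling of the dummy indices — together with the pair symmetries — gives invariance under $A\leftrightarrow B$, and since these two transpositions generate $S_3$ the forms are fully symmetric. This step is pure bookkeeping, and is the only mildly fiddly part of the argument.

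Next I would polarize. Since $W+sV\in\mathcal W$ for every $V\in\mathcal W$ and $s\in\erre$, identity \eqref{WWW} applied to $W+sV$ holds for all $s$; equating the coefficients of $s^{2}$ and using trilinearity of $F$ and $G$ gives
$$
F(V,V,W)+F(V,W,V)+F(W,V,V)=\tfrac12\big(G(V,V,W)+G(V,W,V)+G(W,V,V)\big).
$$
By the $S_3$-symmetry just established, each side has its three summands equal, so this reduces to $F(W,V,V)=\tfrac12\,G(W,V,V)$, that is,
$$
W_{ijkl}\,V_{ipkq}\,V_{jplq}=\tfrac12\,W_{ijkl}\,V_{ijpq}\,V_{klpq}\qquad\text{for all }V\in\mathcal W .
$$

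Finally I would apply this with $V=W_{\cdot,t}$. Fix a point of $M^{4}$ and a local orthonormal frame; for each fixed index $t$ the tensor $V_{ijkl}:=W_{ijkl,t}$ lies in $\mathcal W$, because covariant differentiation preserves the curvature-type symmetries and, $g$ being parallel, $g^{jl}W_{ijkl,t}=(g^{jl}W_{ijkl})_{,t}=0$, so $W_{\cdot,t}$ is still totally trace-free. Hence, for each fixed $t$ and without summation,
$$
W_{ijkl}\,W_{ipkq,t}\,W_{jplq,t}=\tfrac12\,W_{ijkl}\,W_{ijpq,t}\,W_{klpq,t},
$$
and summing over $t$ yields the lemma. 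I would also note that an alternative, computational proof is available along the lines of Lemma \ref{lem-key1} — expanding $W^{\pm}$ and $\nabla W^{\pm}$ via \eqref{eq-derder} and the quaternionic relations — which here does not require \eqref{eq-divz} and hence holds with no harmonic-Weyl hypothesis; but the polarization argument above is considerably shorter.
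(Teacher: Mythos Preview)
Your polarization argument is correct and is genuinely different from the paper's proof. The paper proceeds by a direct computation: it first reduces to the self-dual part by showing that the mixed terms $W_{ijkl}W^{-}_{ipkq,t}W^{+}_{jplq,t}$, $W^{-}_{ijkl}W^{+}_{ipkq,t}W^{+}_{jplq,t}$, etc., all vanish (using the orthogonality of $\Lambda^{+}$ and $\Lambda^{-}$), and then verifies the $W^{+}$-identity by expanding both sides via the Derdzinski decomposition \eqref{eq-derw}, \eqref{eq-derder} and the quaternionic relations, eventually matching each side to the expression $\lambda|d\lambda|^{2}+\mu|d\mu|^{2}+\nu|d\nu|^{2}-\lambda(\mu-\nu)^{2}|a|^{2}-\mu(\nu-\lambda)^{2}|b|^{2}-\nu(\lambda-\mu)^{2}|c|^{2}$.

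Your route is considerably shorter and conceptually cleaner: once one observes that \eqref{WWW} is a purely algebraic identity valid for every element of $\mathcal W$ (which is legitimate, since every such tensor is realized as a Weyl tensor at a point, and the cited proof in \cite{jackparker} is algebraic), and that the trilinear forms $F$ and $G$ are $S_{3}$-symmetric (your index check is correct; in particular $F(B,A,C)=B_{ipkq}A_{ijkl}C_{pjql}=A_{ijkl}B_{ipkq}C_{jplq}=F(A,B,C)$ after two sign flips in $C$), polarization immediately yields $F(W,V,V)=\tfrac12 G(W,V,V)$ for every $V\in\mathcal W$, and $V=\nabla_{t}W$ lies in $\mathcal W$ since covariant differentiation preserves the curvature symmetries and the trace-free condition. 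The paper's computational approach, on the other hand, has the advantage of producing as a byproduct the explicit expression \eqref{eqrhs} in terms of eigenvalues and the one-forms $a,b,c$, which is reused elsewhere; your argument bypasses that machinery entirely, and in fact gives the slightly stronger unsummed identity for each fixed $t$.
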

\begin{proof} First of all, 
\begin{align*}
W_{ijkl}W_{ipkq,t}W_{jplq,t} =&\, W^{+}_{ijkl}W^{+}_{ipkq,t}W^{+}_{jplq,t}+W^{-}_{ijkl}W^{-}_{ipkq,t}W^{-}_{jplq,t}\\
&+2W_{ijkl}W^{-}_{ipkq,t}W^{+}_{jplq,t}+W^{-}_{ijkl}W^{+}_{ipkq,t}W^{+}_{jplq,t}+W^{+}_{ijkl}W^{-}_{ipkq,t}W^{-}_{jplq,t}\,.
\end{align*}
Following the last part of the proof of Lemma \ref{lem-key1}, it is not difficult to deduce that
$$
W_{ijkl}W^{-}_{ipkq,t}W^{+}_{jplq,t}= W^{-}_{ijkl}W^{+}_{ipkq,t}W^{+}_{jplq,t}=W^{+}_{ijkl}W^{-}_{ipkq,t}W^{-}_{jplq,t}=0 \,,
$$
and thus
$$
W_{ijkl}W_{ipkq,t}W_{jplq,t} = W^{+}_{ijkl}W^{+}_{ipkq,t}W^{+}_{jplq,t}+W^{-}_{ijkl}W^{-}_{ipkq,t}W^{-}_{jplq,t}\,.
$$
So we need to show that
$$
W^{+}_{ijkl}W^{+}_{ipkq,t}W^{+}_{jplq,t} = \frac{1}{2}W^{+}_{ijkl}W^{+}_{ijpq,t}W^{+}_{klpq,t}\,.
$$
From equation \eqref{eqrhs}, which holds on every four dimensional Riemannian manifold, we have
\begin{align*} 
\frac{1}{8}\,W^{+}_{ijkl}W^{+}_{ijpq,t}W^{+}_{klpq,t} =& \,\, \lambda |d\lambda|^{2}+\mu|d\mu|^{2}+\nu|d\nu|^{2}\\
&-\lambda(\mu-\nu)^{2}|a|^{2}-\mu(\nu-\lambda)^{2}|b|^{2}-\nu(\lambda-\mu)^{2}|c|^{2}\,,
\end{align*}
and the corresponding expression holds for $W^{-}$. Concerning the left-hand side, we recall that
\begin{align*}
2 W^{+}_{ijkl} =&\,\, \lambda\,\omega_{ij}\omega_{kl}+\mu\,\eta_{ij}\eta_{kl}+\nu\,\theta_{ij}\theta_{kl} \,, \\
2 W^{+}_{ipkq,t} =&\,\, \Big(\lambda_{t}\omega_{kq}+(\lambda-\mu)c_{t}\eta_{kq}+(\nu-\lambda)b_{t}\theta_{kq}\Big)\omega_{ip}\\
&+ \Big(\mu_{t}\eta_{kq}+(\lambda-\mu)c_{t}\omega_{kq}+(\mu-\nu)a_{t}\theta_{kq}\Big)\eta_{ip}\\
&+ \Big(\nu_{t}\theta_{kq}+(\nu-\lambda)b_{t}\omega_{kq}+(\mu-\nu)a_{t}\eta_{kq}\Big)\theta_{ip} \,,\\
2 W^{+}_{jplq,t} =&\,\, \Big(\lambda_{t}\omega_{lq}+(\lambda-\mu)c_{t}\eta_{lq}+(\nu-\lambda)b_{t}\theta_{lq}\Big)\omega_{jp}\\
&+ \Big(\mu_{t}\eta_{lq}+(\lambda-\mu)c_{t}\omega_{lq}+(\mu-\nu)a_{t}\theta_{lq}\Big)\eta_{jp}\\
&+ \Big(\nu_{t}\theta_{lq}+(\nu-\lambda)b_{t}\omega_{lq}+(\mu-\nu)a_{t}\eta_{lq}\Big)\theta_{jp}\,.
\end{align*}
Using the quaternionic structure, we have
\begin{align*} 
4W^{+}_{ipkq,t}W^{+}_{jplq,t} =& -\Big(\lambda_{t}\omega_{kq}+(\lambda-\mu)c_{t}\eta_{kq}+(\nu-\lambda)b_{t}\theta_{kq}\Big)\Big(\mu_{t}\eta_{lq}+(\lambda-\mu)c_{t}\omega_{lq}+(\mu-\nu)a_{t}\theta_{lq}\Big)\theta_{ij}\\
&+\Big(\lambda_{t}\omega_{kq}+(\lambda-\mu)c_{t}\eta_{kq}+(\nu-\lambda)b_{t}\theta_{kq}\Big)\Big(\nu_{t}\theta_{lq}+(\nu-\lambda)b_{t}\omega_{lq}+(\mu-\nu)a_{t}\eta_{lq}\Big)\eta_{ij}\\
&+\Big(\mu_{t}\eta_{kq}+(\lambda-\mu)c_{t}\omega_{kq}+(\mu-\nu)a_{t}\theta_{kq}\Big)\Big(\lambda_{t}\omega_{lq}+(\lambda-\mu)c_{t}\eta_{lq}+(\nu-\lambda)b_{t}\theta_{lq}\Big)\theta_{ij}\\
&-\Big(\mu_{t}\eta_{kq}+(\lambda-\mu)c_{t}\omega_{kq}+(\mu-\nu)a_{t}\theta_{kq}\Big)\Big(\nu_{t}\theta_{lq}+(\nu-\lambda)b_{t}\omega_{lq}+(\mu-\nu)a_{t}\eta_{lq}\Big)\omega_{ij}\\
&-\Big(\nu_{t}\theta_{kq}+(\nu-\lambda)b_{t}\omega_{kq}+(\mu-\nu)a_{t}\eta_{kq}\Big)\Big(\lambda_{t}\omega_{lq}+(\lambda-\mu)c_{t}\eta_{lq}+(\nu-\lambda)b_{t}\theta_{lq}\Big)\eta_{ij}\\
&+\Big(\nu_{t}\theta_{kq}+(\nu-\lambda)b_{t}\omega_{kq}+(\mu-\nu)a_{t}\eta_{kq}\Big)\Big(\mu_{t}\eta_{lq}+(\lambda-\mu)c_{t}\omega_{lq}+(\mu-\nu)a_{t}\theta_{lq}\Big)\omega_{ij}\\
&+T_{kl}\delta_{ij}\,,
\end{align*}
for some two-tensor $T$. Hence,
\begin{align*} 
4W^{+}_{ipkq,t}&W^{+}_{jplq,t} =\\
&-2\Big(\mu_{t}\eta_{kq}+(\lambda-\mu)c_{t}\omega_{kq}+(\mu-\nu)a_{t}\theta_{kq}\Big)\Big(\nu_{t}\theta_{lq}+(\nu-\lambda)b_{t}\omega_{lq}+(\mu-\nu)a_{t}\eta_{lq}\Big)\omega_{ij}\\
&-2\Big(\nu_{t}\theta_{kq}+(\nu-\lambda)b_{t}\omega_{kq}+(\mu-\nu)a_{t}\eta_{kq}\Big)\Big(\lambda_{t}\omega_{lq}+(\lambda-\mu)c_{t}\eta_{lq}+(\nu-\lambda)b_{t}\theta_{lq}\Big)\eta_{ij}\\
&-2\Big(\lambda_{t}\omega_{kq}+(\lambda-\mu)c_{t}\eta_{kq}+(\nu-\lambda)b_{t}\theta_{kq}\Big)\Big(\mu_{t}\eta_{lq}+(\lambda-\mu)c_{t}\omega_{lq}+(\mu-\nu)a_{t}\theta_{lq}\Big)\theta_{ij}\\
&+T_{kl}\delta_{ij}\,.
\end{align*}
This implies
\begin{align*} 
2W^{+}_{ijkl}&W^{+}_{ipkq,t}W^{+}_{jplq,t} = \\
&-4\lambda\Big(\mu_{t}\eta_{kq}+(\lambda-\mu)c_{t}\omega_{kq}+(\mu-\nu)a_{t}\theta_{kq}\Big)\Big(\nu_{t}\theta_{lq}+(\nu-\lambda)b_{t}\omega_{lq}+(\mu-\nu)a_{t}\eta_{lq}\Big)\omega_{kl}\\
&-4\mu\Big(\nu_{t}\theta_{kq}+(\nu-\lambda)b_{t}\omega_{kq}+(\mu-\nu)a_{t}\eta_{kq}\Big)\Big(\lambda_{t}\omega_{lq}+(\lambda-\mu)c_{t}\eta_{lq}+(\nu-\lambda)b_{t}\theta_{lq}\Big)\eta_{kl}\\
&-4\nu\Big(\lambda_{t}\omega_{kq}+(\lambda-\mu)c_{t}\eta_{kq}+(\nu-\lambda)b_{t}\theta_{kq}\Big)\Big(\mu_{t}\eta_{lq}+(\lambda-\mu)c_{t}\omega_{lq}+(\mu-\nu)a_{t}\theta_{lq}\Big)\theta_{kl}\\
&=-8\lambda\Big((\mu-\nu)^{2}|a|^{2}-\mu_{t}\nu_{t}\Big)-8\mu\Big((\nu-\lambda)^{2}|b|^{2}-\lambda_{t}\nu_{t}\Big)-8\nu\Big((\lambda-\mu)^{2}|c|^{2}-\lambda_{t}\mu_{t}\Big)\,.
\end{align*}
Now, since $d\lambda+d\mu+d\nu=0$, one has
\begin{align*}
2\lambda_{t}\mu_{t}&=|d\nu|^{2}-|d\lambda|^{2}-|d\mu|^{2} \,,\\
2\lambda_{t}\nu_{t}&=|d\mu|^{2}-|d\lambda|^{2}-|d\nu|^{2} \,,\\
2\mu_{t}\nu_{t}&=|d\lambda|^{2}-|d\mu|^{2}-|d\nu|^{2}\,.
\end{align*}
Finally, from the relation $\lambda+\mu+\nu=0$, we get 
\begin{align*} 
\frac{1}{4}\,W^{+}_{ijkl}W^{+}_{ipkq,t}W^{+}_{jplq,t}=& \,\, \lambda |d\lambda|^{2}+\mu|d\mu|^{2}+\nu|d\nu|^{2}\\
&-\lambda(\mu-\nu)^{2}|a|^{2}-\mu(\nu-\lambda)^{2}|b|^{2}-\nu(\lambda-\mu)^{2}|c|^{2}\,,
\end{align*}
and this concludes the proof of the lemma.
\end{proof}

\

\section{The classical Bochner formula for the Weyl tensor}

In this section we recall and prove the well known Bochner formula for manifolds with harmonic Weyl curvature. 
\begin{lemma}
  Let $\pa{M, g}$ be a Riemannian manifold of dimension $n\geq 4$, with harmonic Weyl tensor (i.e. $W_{mijk,m}=0$). Then
  \begin{align}\label{LaplacianOfHarmonicWeyl}
    \Delta W_{ijkl} &=R_{ip}W_{pjkl}-R_{jp}W_{pikl}-2\pa{W_{ipjq}W_{pqkl}-W_{ipql}W_{jpqk}+W_{ipqk}W_{jpql}} \\\nonumber  &+\frac{1}{n-2}\sq{R_{jp}W_{pikl}-R_{ip}W_{pjkl}+R_{lp}\pa{W_{pjki}-W_{pikj}}-R_{kp}\pa{W_{pjli}-W_{pilj}}}\\ \nonumber &+\frac{1}{n-2}\sq{R_{pq}\pa{W_{piql}\delta_{kj}-W_{pjql}\delta_{ki}+W_{pikq}\delta_{lj}-W_{pjkq}\delta_{li}}}\,.
  \end{align}
 As a consequence, one has
  \begin{equation}\label{BWHarmonicWeyl}
    \frac{1}{2}\Delta |W|^{2} = |\nabla W|^{2}+2R_{pq}W_{pikl}W_{qikl}-2\Big(2W_{ijkl}W_{ipkq}W_{jplq}+\tfrac{1}{2}W_{ijkl}W_{ijpq}W_{klpq}\Big) \,.
  \end{equation}
\end{lemma}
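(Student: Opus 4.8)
The plan is to first establish the pointwise identity \eqref{LaplacianOfHarmonicWeyl} for $\Delta W$ and then obtain \eqref{BWHarmonicWeyl} by contracting it with $W$ and invoking the Bochner--Weitzenb\"ock identity for the squared norm.

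For \eqref{LaplacianOfHarmonicWeyl}: since $W_{mijk,m}=0$, equation \eqref{def_Cotton_comp_Weyl} shows that the Cotton tensor vanishes, so Lemma \ref{lemma_fake2ndBianchiWeyl} reduces to the genuine ``second Bianchi identity for $W$'', namely $W_{ijkt,l}+W_{ijlk,t}+W_{ijtl,k}=0$. I would differentiate this identity covariantly in the slot-$l$ direction and trace over $l$, which expresses $\Delta W_{ijkt}=W_{ijkt,ll}$ as $-W_{ijlk,tl}-W_{ijtl,kl}$. In each of these two terms I commute the two covariant derivatives by means of \eqref{SecondDerivWeylusingRiem}; the ``already ordered'' pieces that survive the commutation are of the form $\nabla(\diver W)$, hence vanish, because $W_{mijk,m}=0$ together with the symmetries of the Weyl tensor forces every single-slot divergence of $W$ to vanish. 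What remains is a sum of terms of the shape $W_{\bullet\bullet\bullet\bullet}R_{\bullet\bullet\bullet\bullet}$. Substituting the inversion of the decomposition \eqref{Weyl}, each such term splits into a $W\ast W$ piece, a $\ricc$-linear piece and a scalar-curvature piece; reorganising these using the first Bianchi identity for $W$ and the total trace-freeness of $W$, the scalar-curvature contribution cancels, the $W\ast W$ contribution collapses to $-2(W_{ipjq}W_{pqkl}-W_{ipql}W_{jpqk}+W_{ipqk}W_{jpql})$, and the $\ricc$-linear contribution assembles into the two displayed brackets.

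For \eqref{BWHarmonicWeyl}: I use the Weitzenb\"ock identity for the squared norm, $\tfrac12\Delta|W|^{2}=W_{ijkl,mm}W_{ijkl}+|\nabla W|^{2}=\langle\Delta W,W\rangle+|\nabla W|^{2}$ (with $\Delta$ the rough Laplacian), and contract \eqref{LaplacianOfHarmonicWeyl} against $W_{ijkl}$. The pure-Kronecker terms in the last line of \eqref{LaplacianOfHarmonicWeyl} vanish because $W$ is trace-free. For the $\ricc$-linear terms, repeated use of the first Bianchi identity and of the pair symmetry $W_{ijkl}=W_{klij}$ shows that every contraction of $W$ with itself over three indices reduces, up to sign, to $W_{pikl}W_{qikl}$, and the surviving coefficients add up to $+2$, producing the term $2R_{pq}W_{pikl}W_{qikl}$. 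For the cubic terms, the first Bianchi identity gives $W_{ipjq}W_{pqkl}=\tfrac12 W_{ijpq}W_{klpq}$, while the other two cubic terms coincide after an antisymmetric relabelling of a pair of summation indices; collecting everything yields $-2\big(2W_{ijkl}W_{ipkq}W_{jplq}+\tfrac12 W_{ijkl}W_{ijpq}W_{klpq}\big)$, which is \eqref{BWHarmonicWeyl}.

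The main obstacle here is purely computational: the commutation step and, above all, the reorganisation of the quadratic curvature terms via the first Bianchi identity are error-prone rather than conceptually difficult, and keeping the index symmetries straight is the delicate point. A useful consistency check is to specialise to an Einstein metric in dimension four, where $\ricc=\tfrac{R}{4}g$ makes the $\tfrac{1}{n-2}$-brackets in \eqref{LaplacianOfHarmonicWeyl} cancel, so that $\Delta W=\tfrac{R}{2}W+W\ast W$, and \eqref{BWHarmonicWeyl} reduces to the classical first Bochner formula quoted in the Introduction.
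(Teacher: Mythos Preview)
Your proposal is correct and follows essentially the same route as the paper: both start from the second Bianchi identity for $W$ (available because harmonic Weyl kills the Cotton tensor), differentiate and trace, commute the two derivatives via \eqref{SecondDerivWeylusingRiem} so that the ordered pieces vanish as divergences of $W$, and then expand the resulting $W\ast R$ terms through the Riemann decomposition and the first Bianchi identity; the scalar formula \eqref{BWHarmonicWeyl} is obtained in both cases by contracting \eqref{LaplacianOfHarmonicWeyl} with $W$ and using $\tfrac12\Delta|W|^{2}=|\nabla W|^{2}+\langle W,\Delta W\rangle$. One small point of wording: the relation you quote, $W_{ipjq}W_{pqkl}=\tfrac12 W_{ijpq}W_{klpq}$, only holds after contraction with $W_{ijkl}$, not as a bare two-tensor identity.
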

\begin{proof}
  We follow closely the argument in \cite{sin, hebvau}. Since $\diver (W)=W_{mlij, m}\equiv 0$ and $n\geq4$, the Cotton tensor vanishes  identically and thus we have the validity of the second Bianchi identity (see e.g. \cite{catmasmonrig})
  \begin{equation}\label{2ndBIWeyl}
    W_{klij, m}+W_{klmi, j}+W_{kljm, i}=0;
  \end{equation}
  taking the covariant derivative, tracing with respect to $m$ and using the symmetries of the Weyl tensor we have
  \[
  -W_{klij, mm}-W_{klmi, jm}+W_{mjkl, im}=0,
  \]
  which can be written, equivalently, as
  \begin{equation}\label{LaplW_eq1}
-W_{klij, mm}-\pa{W_{klmi, jm} -W_{klmi, mj}}+\pa{W_{mjkl, im}-W_{mjkl, mi}}=0
  \end{equation}
  (note that $W_{klmi, jm} -W_{klmi, mj} = W_{mikl, jm} -W_{mikl, mj}$).
First we analyse the second term in the previous relation (the third can be obtained from the second by interchanging $i$ and $j$); from the commutation relation \eqref{SecondDerivWeylusingRiem} we have
\[
W_{klmi, jm} -W_{klmi, mj}=W_{rlmi}R_{rkjm}+W_{krmi}R_{rljm}+W_{klri}R_{rmjm}+W_{klmr}R_{rijm},
\]
which becomes, after a simple computation using the decomposition of the Riemann curvature tensor and the first Bianchi identity,
\begin{align}\label{firstterm}
  W_{klmi, jm} -W_{klmi, mj} &= -W_{rlmi}W_{rkmj}+W_{rkmi}W_{rlmj}-W_{mirj}W_{mrlk}-R_{rj}W_{irkl} \\ \nonumber &+\frac{1}{n-2}\sq{R_{im}W_{mjkl}+R_{lm}W_{mikj}+R_{km}W_{mijl}} \\ \nonumber &-\frac{1}{n-2}\sq{R_{rm}\pa{W_{mirl}\delta_{kj}+W_{mikr}\delta_{lj}+W_{mrkl}\delta_{ij}}}.
\end{align}
Substituting in \eqref{LaplW_eq1} and renaming indexes we obtain equation \eqref{LaplacianOfHarmonicWeyl}. \eqref{BWHarmonicWeyl} now follows immediately from \eqref{LaplacianOfHarmonicWeyl}, since $\frac{1}{2}\Delta\abs{W}^2 = \abs{\nabla W}^2 + W_{ijkt}W_{ijkt, ll}$.
\end{proof}

In particular, in dimension four we have

\begin{cor} On a four dimensional manifold with harmonic Weyl curvature one has
\begin{equation}\label{eq-bw}
\Delta W_{ijkl}=W_{ijkl, tt} = \frac{R}{2}W_{ijkl}-2(W_{ipjq}W_{pqkl}-W_{ipql}W_{jpqk}+W_{ipqk}W_{jqpl}) \,
\end{equation}
and
\begin{equation}\label{nice}
\frac{1}{2}\Delta |W|^{2} = |\nabla W|^{2}+\frac{R}{2}|W|^{2}-3W_{ijkl}W_{ijpq}W_{klpq} \,.
\end{equation}

\end{cor}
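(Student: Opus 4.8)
The plan is to derive both identities by specializing the general-dimension formulas \eqref{LaplacianOfHarmonicWeyl} and \eqref{BWHarmonicWeyl} to $n=4$, where the dimensional factor becomes $\tfrac{1}{n-2}=\tfrac12$ and the dimension-four algebraic identities of Section~\ref{sec3} become available.

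For \eqref{eq-bw} I would start from \eqref{LaplacianOfHarmonicWeyl}. Adding the two contributions proportional to $R_{ip}W_{pjkl}$ and to $R_{jp}W_{pikl}$, the Ricci part of the right-hand side collapses to $\tfrac12$ times $R_{ip}W_{pjkl}-R_{jp}W_{pikl}+R_{lp}(W_{pjki}-W_{pikj})-R_{kp}(W_{pjli}-W_{pilj})+R_{pq}(W_{piql}\delta_{kj}-W_{pjql}\delta_{ki}+W_{pikq}\delta_{lj}-W_{pjkq}\delta_{li})$. Now I would split $R_{ij}=\overset{\circ}{Ric}_{ij}+\tfrac R4 g_{ij}$. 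In the part proportional to $g$, all the terms carrying a Kronecker delta vanish because they involve a trace of $W$, which is zero; the four remaining terms, after using the first Bianchi identity for $W$ and the skew-symmetries, add up to exactly $\tfrac R2 W_{ijkl}$. The trace-free part contributes nothing: the same combination with $\overset{\circ}{Ric}$ in place of $R$ vanishes identically in dimension four. This is the crux of the argument — it follows from the first Bianchi identity for $W$ together with total trace-freeness, and it is exactly what has to happen, since $\Delta W$ is again a section of the Weyl bundle $S^2_0(\Lambda^2)$ and no trace-free-Ricci-type term can survive on the right. Finally, a short rearrangement of the quadratic terms $W\ast W$ via the first Bianchi identity puts them in the stated form, yielding \eqref{eq-bw}.

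For \eqref{nice} the quickest route is to specialize \eqref{BWHarmonicWeyl} directly (equivalently, contract \eqref{eq-bw} with $W_{ijkl}$ and use $\tfrac12\Delta|W|^2=|\nabla W|^2+W_{ijkl}\Delta W_{ijkl}$). Two dimension-four identities then finish the computation: by \eqref{WeylWeylMetric} the Ricci term collapses, $R_{pq}W_{pikl}W_{qikl}=\tfrac14|W|^2 R_{pq}g_{pq}=\tfrac R4|W|^2$, so $2R_{pq}W_{pikl}W_{qikl}=\tfrac R2|W|^2$; and by \eqref{WWW} one has $2W_{ijkl}W_{ipkq}W_{jplq}=W_{ijkl}W_{ijpq}W_{klpq}$, so that $-2\big(2W_{ijkl}W_{ipkq}W_{jplq}+\tfrac12 W_{ijkl}W_{ijpq}W_{klpq}\big)=-3\,W_{ijkl}W_{ijpq}W_{klpq}$. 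Substituting gives \eqref{nice}. The main obstacle is the step in the previous paragraph — verifying that in dimension four the trace-free-Ricci contractions drop out of the formula for $\Delta W_{ijkl}$ — which, unlike the collapse appearing in \eqref{nice} that is immediate from \eqref{WeylWeylMetric}, genuinely requires the first Bianchi identity for $W$ and is special to $n=4$; one must also take some care in bringing the quadratic curvature terms to the precise stated form.
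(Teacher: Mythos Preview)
Your approach is essentially the paper's: for \eqref{nice} you specialize \eqref{BWHarmonicWeyl} using \eqref{WeylWeylMetric} and \eqref{WWW}, which is exactly the one-line proof given; for \eqref{eq-bw} the paper in fact offers no separate argument, so your specialization of \eqref{LaplacianOfHarmonicWeyl} to $n=4$ is the natural route and already goes further than what is written. One caution: the heuristic that ``$\Delta W$ lies in the Weyl bundle, so no trace-free-Ricci term can survive'' is not a proof---nothing a priori prevents a nonzero $\overset{\circ}{Ric}\ast W$ combination from being totally trace-free with full Weyl symmetries---so the cancellation really must be checked as the direct dimension-four algebraic identity you describe (first Bianchi plus trace-freeness), which you correctly flag as the crux.
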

\begin{proof}
 The proof is just an easy computation using \eqref{BWHarmonicWeyl} with equations \eqref{WeylWeylMetric} and \eqref{WWW}.
 \end{proof}
An easy computation shows that the same equation holds for the self-dual and anti-self-dual part of the Weyl tensor, namely on every four dimensional manifold with half harmonic Weyl curvature, $\diver(W^{\pm})=0$, one has
\begin{equation}\label{niceself}
\frac{1}{2}\Delta |W^{\pm}|^{2} = |\nabla W^{\pm}|^{2}+\frac{R}{2}|W^{\pm}|^{2}-3W^{\pm}_{ijkl}W^{\pm}_{ijpq}W^{\pm}_{klpq} \,.
\end{equation}
These first Bochner formulas for the Weyl tensor have been exploited in the last decades by a number of authors. Just to mention some of them, we refer to  Derdzinski \cite{derd}, Singer \cite{sin}, Hebey-Vaugon \cite{hebvau}, Gursky \cite{gur1,gur2}, Gursky-Lebrun \cite{gurleb}, Yang \cite{yang} and references therein.

\

\section{Higher order ``rough'' Bochner formulas}

The aim of this section is to compute new ``rough'' Bochner type formulas for the $k$-th covariant derivative of the Weyl tensor. The reason for this terminology is that the proof do not make use of the algebraic properties related to dimension four, but only exploits the commutation rules for covariant derivatives of $W$. We first treat the case $k=1$.

\begin{proposition}\label{pro-boch} On a four dimensional Einstein manifold we have
$$
\frac{1}{2}\Delta|\nabla W|^{2} = |\nabla^{2} W|^{2} + \langle \nabla W, \nabla \Delta W \rangle +\frac{R}{4}|\nabla W|^{2} + 8 W_{ijkl,s}W_{rjkl,t}R_{rist}\,.
$$
Equivalently
$$
\frac{1}{2}\Delta|\nabla W|^{2} = |\nabla^{2} W|^{2} + \langle \nabla W, \nabla \Delta W \rangle +\frac{R}{4}|\nabla W|^{2} + 8 W_{ijkl,s}W_{rjkl,t}W_{rist} + \frac{2}{3}R\,W_{ijkl,s}W_{sjkl,i}\,.
$$
\end{proposition}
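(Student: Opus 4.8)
The plan is to differentiate the squared norm twice and reduce the whole statement to two commutations of covariant derivatives. Working in a local orthonormal frame and writing $|\nabla W|^{2}=W_{ijkl,s}W_{ijkl,s}$, a direct computation gives
\[
\tfrac12\Delta|\nabla W|^{2}=|\nabla^{2}W|^{2}+W_{ijkl,s}\,W_{ijkl,stt}.
\]
Since $\nabla_{s}\Delta W_{ijkl}=W_{ijkl,tts}$, what remains is to commute the index $s$ from the innermost to the outermost slot in $W_{ijkl,stt}$, i.e.\ to evaluate the two ``jumps'' in
\[
W_{ijkl,stt}=\bigl(W_{ijkl,stt}-W_{ijkl,tst}\bigr)+\bigl(W_{ijkl,tst}-W_{ijkl,tts}\bigr)+W_{ijkl,tts},
\]
and then to contract everything against $W_{ijkl,s}$, so that the last term produces exactly $\langle\nabla W,\nabla\Delta W\rangle$.

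For the second bracket I would apply the third--order commutation formula \eqref{ThirdDerivWeylusingRiem} with $(r,s)$ there replaced by $(s,t)$. It produces the four terms $W_{vjkl,t}R_{vist}+W_{ivkl,t}R_{vjst}+W_{ijvl,t}R_{vkst}+W_{ijkv,t}R_{vlst}$ together with the trace term $W_{ijkl,v}R_{vtst}$; using $n=4$ and $\ricc=\tfrac R4 g$ one has $R_{vtst}=R_{vs}=\tfrac R4\delta_{vs}$, so the trace term equals $\tfrac R4 W_{ijkl,s}$. Contracting with $W_{ijkl,s}$, the trace term yields $\tfrac R4|\nabla W|^{2}$, while each of the four Riemann terms, after renaming dummy indices and using the antisymmetries $W_{ijkl}=-W_{jikl}=-W_{ijlk}$ and the pair symmetry $W_{ijkl}=W_{klij}$ (all shared by $R_{ijkl}$), collapses to the single expression $W_{ijkl,s}W_{rjkl,t}R_{rist}$. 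Hence the second bracket contributes $4\,W_{ijkl,s}W_{rjkl,t}R_{rist}+\tfrac R4|\nabla W|^{2}$.

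For the first bracket, note that $W_{ijkl,stt}-W_{ijkl,tst}=\nabla_{t}\bigl(W_{ijkl,st}-W_{ijkl,ts}\bigr)$, and I would substitute the four--dimensional Einstein commutator of Lemma \ref{lem-comsec}. Differentiating in $t$ and summing, every term in which $\nabla_{t}$ lands on a Weyl factor whose last index is $t$ gives a divergence $W_{\cdots t,t}$, which vanishes on an Einstein manifold once the contracted index is brought to the first slot via \eqref{harmall} and the symmetries of $W$. What survives from the quartic block $W_{rjkl}W_{rist}+W_{irkl}W_{rjst}+W_{ijrl}W_{rkst}+W_{ijkr}W_{rlst}$ is, after the same symmetry reductions as above, $4\,W_{ijkl,s}W_{rjkl,t}W_{rist}$; and what survives from the $\tfrac R{12}$--block is $\tfrac R{12}\bigl(W_{sjkl,i}+W_{iskl,j}+W_{ijsl,k}+W_{ijks,l}\bigr)$, which upon contraction with $W_{ijkl,s}$ collapses (again by the symmetries of $W$) to $\tfrac R3\,W_{ijkl,s}W_{sjkl,i}$.

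It remains to assemble the three pieces. The decomposition \eqref{RiemannEinstein} gives $W_{ijkl,s}W_{rjkl,t}R_{rist}=W_{ijkl,s}W_{rjkl,t}W_{rist}+\tfrac R{12}W_{ijkl,s}W_{sjkl,i}$, the other $\delta$--contraction vanishing by $\diver W=0$; hence $4\,W_{ijkl,s}W_{rjkl,t}W_{rist}+\tfrac R3\,W_{ijkl,s}W_{sjkl,i}=4\,W_{ijkl,s}W_{rjkl,t}R_{rist}$, so the first bracket and the Riemann part of the second bracket combine to $8\,W_{ijkl,s}W_{rjkl,t}R_{rist}$, which is the first stated form; the second (``equivalent'') form follows by expanding this term once more through \eqref{RiemannEinstein}. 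The main difficulty is not conceptual but organizational: one must carry the two commutations with the correct slot positions and verify that each of the several divergence-type terms generated by differentiating the quartic and $\tfrac R{12}$ blocks is genuinely one of the divergences annihilated by \eqref{harmall}, which repeatedly forces one to use the symmetries of $W$ to put the contracted derivative index into the first position.
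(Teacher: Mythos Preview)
Your proposal is correct and follows essentially the same route as the paper: start from $\tfrac12\Delta|\nabla W|^{2}=|\nabla^{2}W|^{2}+W_{ijkl,s}W_{ijkl,stt}$, perform the two commutations $stt\to tst\to tts$ via Lemmas \ref{lem-comsec} and \eqref{ThirdDerivWeylusingRiem}, kill the divergence-type terms by \eqref{harmall}, and collapse the four curvature terms to a single one by the symmetries of $W$. The only cosmetic difference is that for the first bracket you use the Einstein-expanded commutator (Weyl block plus $\tfrac{R}{12}$ block) and then recombine via \eqref{RiemannEinstein}, whereas the paper keeps the Riemann form $\mathfrak{R}_{1}$ throughout; the computations are otherwise identical.
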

\begin{proof} Since $\abs{\nabla W}^2 = W_{ijkl, s}W_{ijkl, s}$ we have
\[
\pa{\abs{\nabla W}^2}_t = 2W_{ijkl, s}W_{ijkl, st}
\]
and, thus
\begin{equation}\label{Delta1}
  \frac{1}{2}\Delta \abs{\nabla W}^2 = \abs{\nabla^2 W}^2 + W_{ijkl, s}W_{ijkl, stt}.
\end{equation}
Now we want to write  $W_{ijkl, stt}$ in the previous equation as $W_{ijkl, tts}$ plus a remainder; to do so, we observe that
\begin{align*}
  W_{ijkl, stt} = \pa{W_{ijkl, st}}_t &= \pa{W_{ijkl, ts} + \mathfrak{R}_1}_t \\ &= W_{ijkl, tst} + \pa{\mathfrak{R}_1}_t \\ &= W_{ijkl, tts} + \mathfrak{R}_2 + \pa{\mathfrak{R}_1}_t,
\end{align*}
where $\mathfrak{R}_1$ and $\mathfrak{R}_2$ are two terms involving the Weyl tensor and the Riemann curvature tensor. Indeed, using Lemma \ref{lem-comsec}, the fact that $(M,g)$ is Einstein and equations \eqref{harmall}, we have
\[
\mathfrak{R}_1 = W_{rjkl}R_{rist}+W_{irkl}R_{rjst}+W_{ijrl}R_{rkst}+W_{ijkr}R_{rlst},
\]
\[
\pa{\mathfrak{R}_1}_t = W_{rjkl, t}R_{rist}+W_{irkl, t}R_{rjst}+W_{ijrl, t}R_{rkst}+W_{ijkr, t}R_{rlst};
\]
\[
\mathfrak{R}_2 = W_{vjkl, t}R_{vist}+W_{ivkl, t}R_{vjst}+W_{ijvl, t}R_{vkst}+W_{ijkv, t}R_{vlst}+\frac{R}{4}W_{ijkl, s}.
\]
Now, a straightforward computation shows that
\[
W_{ijkl, s}\mathfrak{R}_2 = 4W_{ijkl, s}W_{vjkl, t}R_{vist} + \frac{R}{4}\abs{\nabla W}^2 ,
\]
while
\[
W_{ijkl, s}\pa{\mathfrak{R}_1}_{t} = 4W_{ijkl, s}W_{vjkl, t}R_{vist} \,.
\]
This concludes the proof of the first formula. The second one follows using equation \eqref{RiemannEinstein}.
\end{proof}

\begin{rem} We explicitly note that in the previous proof it is not sufficient to assume $\operatorname{div}(\operatorname{Riem})=0$, but we have to require the metric to be Einstein.
\end{rem}

Following this proof, we obtain a first integral identity which will be used in the proof of Lemma \ref{lem-1}.
\begin{cor}\label{cor-d2} On a four dimensional compact Einstein manifold we have
$$
\int W_{ijkl,s}W_{rjkl,t}W_{rist}= -\frac{1}{8}\int \big|W_{ijkl,st}-W_{ijkl,ts}\big|^{2} - \frac{R}{24}\int|\nabla W|^{2}\,.
$$
\end{cor}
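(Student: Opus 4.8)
The plan is to integrate over the compact manifold $M$ the first identity of Proposition \ref{pro-boch}, combine it with a second integral relation obtained by integrating by parts the full squared norm $\int|W_{ijkl,st}-W_{ijkl,ts}|^{2}$, solve the resulting (nondegenerate) linear system for $\int W_{ijkl,s}W_{rjkl,t}R_{rist}$, and finally trade $R_{rist}$ for $W_{rist}$ using the decomposition of the curvature in dimension four. Throughout, $R$ is constant and all integrals over $M$ of a total Laplacian vanish.

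First I would integrate the first formula of Proposition \ref{pro-boch}; since $\int\tfrac12\Delta|\nabla W|^{2}=0$ and, by integration by parts, $\int\langle\nabla W,\nabla\Delta W\rangle=-\int|\Delta W|^{2}$, one obtains
\begin{equation*}
\int|\nabla^{2}W|^{2}-\int|\Delta W|^{2}+\frac{R}{4}\int|\nabla W|^{2}+8\int W_{ijkl,s}W_{rjkl,t}R_{rist}=0\,.\tag{$\star$}
\end{equation*}
For the second relation I would expand, using the symmetries of $W$ and relabelling, $\int|W_{ijkl,st}-W_{ijkl,ts}|^{2}=2\int|\nabla^{2}W|^{2}-2\int W_{ijkl,st}W_{ijkl,ts}$, and then integrate the cross term by parts. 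Moving one $\nabla_{t}$ gives $-\int W_{ijkl,s}W_{ijkl,tst}$; rewriting $W_{ijkl,tst}=W_{ijkl,tts}+\mathfrak{R}_{2}$ through the third–derivative commutation (this is exactly the term $\mathfrak{R}_{2}$ computed in the proof of Proposition \ref{pro-boch}, coming from \eqref{ThirdDerivWeylusingRiem} with $R_{vtst}=\tfrac{R}{4}\delta_{vs}$ on a four dimensional Einstein manifold), and integrating by parts once more the term $W_{ijkl,s}W_{ijkl,tts}=W_{ijkl,s}\nabla_{s}\Delta W_{ijkl}$, one arrives at
\begin{equation*}
\int W_{ijkl,st}W_{ijkl,ts}=\int|\Delta W|^{2}-4\int W_{ijkl,s}W_{rjkl,t}R_{rist}-\frac{R}{4}\int|\nabla W|^{2}\,,
\end{equation*}
so that $\int|W_{ijkl,st}-W_{ijkl,ts}|^{2}=2\int|\nabla^{2}W|^{2}-2\int|\Delta W|^{2}+8\int W_{ijkl,s}W_{rjkl,t}R_{rist}+\tfrac{R}{2}\int|\nabla W|^{2}$. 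Using $(\star)$ to eliminate $\int|\nabla^{2}W|^{2}-\int|\Delta W|^{2}$, every lower order term cancels and one is left with $\int W_{ijkl,s}W_{rjkl,t}R_{rist}=-\tfrac18\int|W_{ijkl,st}-W_{ijkl,ts}|^{2}$.

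It then remains to pass from $R_{rist}$ to $W_{rist}$. By \eqref{RiemannEinstein} in dimension four, $R_{rist}=W_{rist}+\tfrac{R}{12}(\delta_{rs}\delta_{it}-\delta_{rt}\delta_{is})$; contracting against $W_{ijkl,s}W_{rjkl,t}$, the $\delta_{rt}\delta_{is}$ piece vanishes because it contains the divergence $W_{sjkl,s}$, which is zero by \eqref{harmall}, leaving the extra term $\tfrac{R}{12}\int W_{ijkl,s}W_{sjkl,i}$. A purely algebraic manipulation with the symmetries of the Weyl tensor shows $W_{ijkl,s}W_{sjkl,i}=W_{ijkl,t}W_{ijkt,l}$, which by \eqref{GradWeylNormEinstein} equals $\tfrac12|\nabla W|^{2}$; hence $\int W_{ijkl,s}W_{rjkl,t}R_{rist}=\int W_{ijkl,s}W_{rjkl,t}W_{rist}+\tfrac{R}{24}\int|\nabla W|^{2}$, and substituting into the previous identity yields precisely the statement. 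I expect the only delicate points to be the index bookkeeping in the integration by parts of the cross term $\int W_{ijkl,st}W_{ijkl,ts}$ — in particular checking that the four curvature terms in $\mathfrak{R}_{2}$ contract to $4\int W_{ijkl,s}W_{rjkl,t}R_{rist}$ — and verifying the algebraic identity $W_{ijkl,s}W_{sjkl,i}=W_{ijkl,t}W_{ijkt,l}$; the rest is routine once $(\star)$ is in hand.
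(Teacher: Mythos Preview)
Your argument is correct and uses exactly the same ingredients as the paper: the commutation computations from the proof of Proposition~\ref{pro-boch}, one integration by parts, and the identity $W_{ijkl,s}W_{sjkl,i}=\tfrac12|\nabla W|^{2}$ from Lemma~\ref{lem_GradWeylNorm}. The paper's route is slightly more economical: instead of going all the way to the final formula of Proposition~\ref{pro-boch} and then separately expanding $\int|W_{ijkl,st}-W_{ijkl,ts}|^{2}$ via a second pass through the same commutation, it stops at the intermediate identity $\tfrac12\Delta|\nabla W|^{2}=|\nabla^{2}W|^{2}+W_{ijkl,s}W_{ijkl,tst}+4W_{ijkl,s}W_{rjkl,t}W_{rist}+\tfrac{R}{6}|\nabla W|^{2}$ (already with $W$ in place of $R$), integrates, and observes that $\int|\nabla^{2}W|^{2}+\int W_{ijkl,s}W_{ijkl,tst}=\tfrac12\int|W_{ijkl,st}-W_{ijkl,ts}|^{2}$ in one stroke, avoiding your linear system and the separate $R\to W$ conversion at the end.
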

\begin{proof}
From the proof of Proposition \ref{pro-boch}, it follows that
\begin{align*}
\frac{1}{2}\Delta|\nabla W|^{2} &= |\nabla^{2} W|^{2} + W_{ijkl,s}W_{ijkl,tst} + 4 W_{ijkl,s}W_{rjkl,t}R_{rist}\\
&= |\nabla^{2} W|^{2} + W_{ijkl,s}W_{ijkl,tst} + 4 W_{ijkl,s}W_{rjkl,t}W_{rist}+\frac{R}{3}W_{ijkl,s}W_{sjkl,i}\\
&=|\nabla^{2} W|^{2} + W_{ijkl,s}W_{ijkl,tst} + 4 W_{ijkl,s}W_{rjkl,t}W_{rist} + \frac{R}{6}|\nabla W|^{2}\,,
\end{align*}
where in the last equality we used Lemma \ref{lem_GradWeylNorm}. Now, noting that
$$
\frac{1}{2}|W_{ijkl,st}-W_{ijkl,ts}|^{2}=|\nabla^{2}W|^{2}-W_{ijkl,st}W_{ijkl,ts}
$$
and integrating on $M$ the previous equation, we obtain the result.
\end{proof}

The general Bochner formulas for the $k$-th covariant derivative of the Weyl tensor, $k\geq 2$, is contained in the next proposition.

\begin{proposition}\label{pro-boch-k} On a four dimensional Einstein manifold, for every $k\in\mathds{N}$, $k\geq 2$, we have

\begin{align}\label{BochnerBIG}
\frac{1}{2}\Delta|\nabla^k W|^{2} &= |\nabla^{k+1} W|^{2} + \langle \nabla^k W, \nabla \Delta \nabla^{k-1} W \rangle +\frac{R}{4}|\nabla^k W|^{2}\\&+ 8 W_{\alpha\beta\gamma i_0,i_1i_2\cdots i_{k-1}i_k}W_{\alpha\beta\gamma j_0,i_1i_2\cdots i_{k-1}j_k}R_{j_0i_0i_kj_k}\\ \nonumber &+2\sum_{h=1}^{k-1}W_{\alpha\beta\gamma\delta,i_1i_2\cdots i_{h}\cdots i_{k-1}i_k}W_{\alpha\beta\gamma\delta,i_1i_2\cdots j_{h}\cdots i_{k-1}j_k}R_{j_hi_hi_kj_k}\,.
\end{align}
\end{proposition}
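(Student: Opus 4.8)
The plan is to repeat, essentially verbatim in spirit, the argument proving Proposition~\ref{pro-boch}, using the general commutation rule of Lemma~\ref{LE_commutationKorderWeyl} in place of the second order one of Lemma~\ref{lem-comsec}. Working in a local orthonormal frame and writing $I=i_1\cdots i_k$, the starting point is the elementary identity
\begin{equation*}
\frac{1}{2}\Delta|\nabla^k W|^{2}=|\nabla^{k+1}W|^{2}+W_{\alpha\beta\gamma\delta,I}\,W_{\alpha\beta\gamma\delta,Ijj},
\end{equation*}
so that everything reduces to rewriting the last term. I would move the index $i_k$ to the right of the two Laplacian indices $jj$ in two steps. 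First, applying Lemma~\ref{LE_commutationKorderWeyl} at order $k+1$ to commute $i_k$ past the first $j$ in $W_{\alpha\beta\gamma\delta,i_1\cdots i_{k-1}i_kj}$ produces a remainder $\mathfrak R_1$ made of four ``algebraic'' terms $W_{p\beta\gamma\delta,i_1\cdots i_{k-1}}R_{p\alpha i_kj}+\cdots$ and of the $k-1$ ``derivative'' terms $\sum_{h=1}^{k-1}W_{\alpha\beta\gamma\delta,i_1\cdots j_h\cdots i_{k-1}}R_{j_hi_hi_kj}$. Differentiating in $j$, and then applying Lemma~\ref{LE_commutationKorderWeyl} at order $k+2$ to commute $i_k$ past the remaining $j$ in $W_{\alpha\beta\gamma\delta,i_1\cdots i_{k-1}j\,i_k\,j}$, produces a second remainder $\mathfrak R_2$ of the same shape, but with an extra derivative index $j$ attached to the $W$-factors and with the $h$-sum now running up to $h=k$. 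Collecting,
\begin{equation*}
W_{\alpha\beta\gamma\delta,Ijj}=W_{\alpha\beta\gamma\delta,i_1\cdots i_{k-1}jj\,i_k}+\mathfrak R_2+(\mathfrak R_1)_j,
\end{equation*}
where $W_{\alpha\beta\gamma\delta,i_1\cdots i_{k-1}jj\,i_k}=(\nabla\Delta\nabla^{k-1}W)_{\alpha\beta\gamma\delta,I}$, so its contraction with $\nabla^kW$ is exactly $\langle\nabla^kW,\nabla\Delta\nabla^{k-1}W\rangle$.

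It then remains to contract $\mathfrak R_2+(\mathfrak R_1)_j$ with $W_{\alpha\beta\gamma\delta,I}$, and this is where the Einstein hypothesis enters. When $\nabla_j$ in $(\mathfrak R_1)_j$ falls on a curvature factor it produces $R_{p\alpha i_kj,j}$ or $R_{j_hi_hi_kj,j}$, i.e.\ divergences of $Riem$ in the last slot, which vanish by \eqref{harmall}; hence $(\mathfrak R_1)_j$ is obtained simply by appending a $j$ to the $W$-factors of $\mathfrak R_1$, and its algebraic part and its $h$-sum coincide termwise with the algebraic part and the first $k-1$ terms of the $h$-sum of $\mathfrak R_2$. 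The only genuinely new term is the $h=k$ contribution to the $h$-sum of $\mathfrak R_2$, in which the two free curvature indices both equal the contracted $j$: since $\sum_j R_{j_k\,j\,i_k\,j}=R_{j_ki_k}=\frac{R}{4}\,\delta_{j_ki_k}$ on a four dimensional Einstein manifold, this term becomes $\frac{R}{4}\,W_{\alpha\beta\gamma\delta,i_1\cdots i_{k-1}i_k}$ and contributes $\frac{R}{4}|\nabla^kW|^{2}$ after contraction. Each of the eight algebraic terms (four from $\mathfrak R_2$, four from $(\mathfrak R_1)_j$), once contracted with $W_{\alpha\beta\gamma\delta,I}$, reduces by the pair-exchange and antisymmetry properties of $W$ and $Riem$ to $W_{\alpha\beta\gamma i_0,i_1\cdots i_{k-1}i_k}W_{\alpha\beta\gamma j_0,i_1\cdots i_{k-1}j_k}R_{j_0i_0i_kj_k}$, giving the coefficient $8$; and the two copies of the $h$-sum (one from $\mathfrak R_2$, one from $(\mathfrak R_1)_j$) give the factor $2$ in front of $\sum_{h=1}^{k-1}W_{\alpha\beta\gamma\delta,i_1\cdots i_h\cdots i_{k-1}i_k}W_{\alpha\beta\gamma\delta,i_1\cdots j_h\cdots i_{k-1}j_k}R_{j_hi_hi_kj_k}$. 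Assembling the pieces yields \eqref{BochnerBIG}.

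The content is essentially bookkeeping, and that is also where the main difficulty lies: one must carefully track the index strings through the two applications of Lemma~\ref{LE_commutationKorderWeyl} and verify, using only the algebraic symmetries of $W$ and $Riem$, that all the quadratic-in-$\nabla^kW$ contractions collapse to the two displayed ones with the stated numerical coefficients. Nothing beyond Lemma~\ref{LE_commutationKorderWeyl} is needed; the geometry enters only through $\operatorname{div}(Riem)=0$ and $\operatorname{Ric}=\frac{R}{4}g$, both consequences of the Einstein condition in dimension four. Finally, the hypothesis $k\ge 2$ is precisely what places the order-$(k+1)$ commutation within the range of validity of Lemma~\ref{LE_commutationKorderWeyl}, the case $k=1$ being Proposition~\ref{pro-boch}.
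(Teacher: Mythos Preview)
Your proposal is correct and follows essentially the same approach as the paper: the same two-step commutation of $i_k$ past the Laplacian indices via Lemma~\ref{LE_commutationKorderWeyl}, the same use of $\operatorname{div}(Riem)=0$ to kill the curvature-derivative terms in $(\mathfrak R_1)_j$, the same identification $\mathfrak R_2=(\mathfrak R_1)_j+\frac{R}{4}W_{\alpha\beta\gamma\delta,I}$ from the $h=k$ term, and the same symmetry argument collapsing the four algebraic terms into one. The paper's proof is organized identically, only phrased slightly more tersely.
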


\begin{proof} We follow the proof of Proposition \ref{pro-boch-k}. Since $\abs{\nabla^k W}^2 = W_{\alpha\beta\gamma\delta, i_1\cdots i_k}W_{\alpha\beta\gamma\delta, i_1\cdots i_k}$ we have
\[
\pa{\abs{\nabla^k W}^2}_t = 2W_{\alpha\beta\gamma\delta, i_1\cdots i_k}W_{\alpha\beta\gamma\delta, i_1\cdots i_kt}
\]
and thus
\begin{equation}\label{Delta1orderkrough}
  \frac{1}{2}\Delta \abs{\nabla^k W}^2 = \abs{\nabla^{k+1} W}^2 +W_{\alpha\beta\gamma\delta, i_1\cdots i_k}W_{\alpha\beta\gamma\delta, i_1\cdots i_ktt}.
\end{equation}
Now we want to write  $W_{\alpha\beta\gamma\delta, i_1\cdots i_ktt}$ as $W_{\alpha\beta\gamma\delta, i_1\cdots i_{k-1}tt i_k}$ plus a remainder, using Lemma \ref{LE_commutationKorderWeyl}; to do so, we observe that
\begin{align*}
  W_{\alpha\beta\gamma\delta, i_1\cdots i_ktt} = \pa{W_{\alpha\beta\gamma\delta, i_1\cdots i_kt}}_t &= \pa{W_{\alpha\beta\gamma\delta, i_1\cdots i_{k-1}ti_k} + \mathfrak{R}_1}_t \\ &= W_{\alpha\beta\gamma\delta, i_1\cdots i_{k-1}ti_kt} + \pa{\mathfrak{R}_1}_t \\ &= W_{\alpha\beta\gamma\delta, i_1\cdots i_{k-1}tti_k} + \mathfrak{R}_2 + \pa{\mathfrak{R}_1}_t,
\end{align*}
where $\mathfrak{R}_1$ and $\mathfrak{R}_2$ are two terms involving the Weyl tensor and the Riemann curvature tensor. Indeed, using using Lemma \ref{LE_commutationKorderWeyl}, the fact that $(M,g)$ is Einstein and equations \eqref{harmall},
\begin{align*}
\mathfrak{R}_1 &= W_{\alpha\beta\gamma\delta, i_1\cdots i_{k}t} -  W_{\alpha\beta\gamma\delta, i_1\cdots ti_{k}} \\&= \underbrace{W_{p\beta\gamma\delta, i_1\cdots i_{k-1}}R_{p\alpha i_{k}t}+W_{\alpha p \gamma\delta, i_1\cdots i_{k-1}}R_{p\beta i_{k}t}+\ldots + W_{\alpha\beta\gamma\delta, i_1\cdots i_{k-2}p}R_{p i_{k-1} i_{k}t}}_{k+3 \text{ terms}},
\end{align*}
\begin{align*}
\pa{\mathfrak{R}_1}_t = \underbrace{W_{p\beta\gamma\delta, i_1\cdots i_{k-1}t}R_{p\alpha i_{k}t}+W_{\alpha p \gamma\delta, i_1\cdots i_{k-1}t}R_{p\beta i_{k}t}+\ldots + W_{\alpha\beta\gamma\delta, i_1\cdots i_{k-2}pt}R_{p i_{k-1} i_{k}t}}_{k+3 \text{ terms}};
\end{align*}

\begin{align*}
\mathfrak{R}_2 &= W_{\alpha\beta\gamma\delta, i_1\cdots i_{k-1}ti_kt}-W_{\alpha\beta\gamma\delta, i_1\cdots i_{k-1}tti_k} \\&=\underbrace{W_{p\beta\gamma\delta, i_1\cdots i_{k-1}t}R_{p\alpha i_{k}t}+W_{\alpha p \gamma\delta, i_1\cdots i_{k-1}t}R_{p\beta i_{k}t}+\ldots + W_{\alpha\beta\gamma\delta, i_1\cdots i_{k-2}pt}R_{p i_{k-1} i_{k}t}}_{k+3 \text{ terms}}\\&+W_{\alpha\beta\gamma\delta, i_1\cdots i_{k-1}p}R_{pti_{k}t} \\&=\pa{\mathfrak{R}_1}_t + \frac{R}{4}W_{\alpha\beta\gamma\delta, i_1\cdots i_{k-1}i_k},
\end{align*}
and thus \eqref{Delta1orderkrough} becomes
\begin{equation*}
  \frac{1}{2}\Delta \abs{\nabla^k W}^2 = \abs{\nabla^{k+1} W}^2 +W_{\alpha\beta\gamma\delta, i_1\cdots i_k}\sq{W_{\alpha\beta\gamma\delta, i_1\cdots i_{k-1}tti_k} + 2\pa{\mathfrak{R}_1}_t+\frac{R}{4}W_{\alpha\beta\gamma\delta, i_1\cdots i_k}}.
\end{equation*}

Now, a lengthy computation shows that
\begin{align*}
W_{\alpha\beta\gamma\delta, i_1\cdots i_k}\pa{\mathfrak{R}_1}_t &= 4 W_{\alpha\beta\gamma i_0,i_1i_2\cdots i_{k-1}i_k}W_{\alpha\beta\gamma j_0,i_1i_2\cdots i_{k-1}j_k}R_{j_0i_0i_kj_k}\\ \nonumber &+\sum_{h=1}^{k-1}W_{\alpha\beta\gamma\delta,i_1i_2\cdots i_{h}\cdots i_{k-1}i_k}W_{\alpha\beta\gamma\delta,i_1i_2\cdots j_{h}\cdots i_{k-1}j_k}R_{j_hi_hi_kj_k},
\end{align*}

implying equation \eqref{BochnerBIG}.
\end{proof}

\begin{rem} To help the reader, we highlight that the cases $k=2,3,4$ read as follows:
\begin{align*}
\frac{1}{2}\Delta|\nabla^2 W|^{2} &= |\nabla^{3} W|^{2} + \langle \nabla^2 W, \nabla \Delta \nabla W \rangle +\frac{R}{4}|\nabla^2 W|^{2}+ 8 W_{ijkl,tr}W_{pjkl,ts}R_{pirs}\\ &+2W_{ijkl, tr}W_{ijkl, ps}R_{ptrs}\,,
\end{align*}

\begin{align*}
\frac{1}{2}\Delta|\nabla^3 W|^{2} &= |\nabla^{4} W|^{2} + \langle \nabla^3 W, \nabla \Delta \nabla^2 W \rangle +\frac{R}{4}|\nabla^3 W|^{2}+ 8 W_{ijkl,trs}W_{pjkl,tru}R_{pisu}\\ &+2W_{ijkl,pru}W_{ijkl,trs}R_{ptsu}+2W_{ijkl,tpu}W_{ijkl,trs}R_{prsu}\,,
\end{align*}

\begin{align*}
\frac{1}{2}\Delta|\nabla^4 W|^{2} &= |\nabla^{5} W|^{2} + \langle \nabla^4 W, \nabla \Delta \nabla^3 W \rangle +\frac{R}{4}|\nabla^4 W|^{2}+ 8 W_{ijkl,trsu}W_{pjkl,trsv}R_{piuv}\\ &+2W_{ijkl,trsu}W_{ijkl,prsv}R_{ptuv}+2W_{ijkl,trsu}W_{ijkl,tpsv}R_{pruv}+2W_{ijkl,trsu}W_{ijkl,trpv}R_{psuv}\,.
\end{align*}
\end{rem}

\begin{rem} We note that, with suitable changes, Proposition \ref{pro-boch-k} holds in every dimension.
\end{rem}

To conclude this section, we observe that, with no changes in the proofs, all the previous formulas hold also for the self-dual and anti-self-dual part of Weyl:
\begin{proposition}\label{pro-boch-k-pm} On a four dimensional Einstein manifold, for every $k\in\mathds{N}$, $k\geq 2$, we have
\begin{align}\label{BochnerBIGpm}
\frac{1}{2}\Delta|\nabla^k W^{\pm}|^{2} &= |\nabla^{k+1} W^{\pm}|^{2} + \langle \nabla^k W^{\pm}, \nabla \Delta \nabla^{k-1} W^{\pm} \rangle +\frac{R}{4}|\nabla^k W^{\pm}|^{2}\\&+ 8 W^{\pm}_{\alpha\beta\gamma i_0,i_1i_2\cdots i_{k-1}i_k}W^{\pm}_{\alpha\beta\gamma j_0,i_1i_2\cdots i_{k-1}j_k}R_{j_0i_0i_kj_k}\\ \nonumber &+2\sum_{h=1}^{k-1}W^{\pm}_{\alpha\beta\gamma\delta,i_1i_2\cdots i_{h}\cdots i_{k-1}i_k}W^{\pm}_{\alpha\beta\gamma\delta,i_1i_2\cdots j_{h}\cdots i_{k-1}j_k}R_{j_hi_hi_kj_k}\,.
\end{align}
\end{proposition}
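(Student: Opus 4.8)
The plan is to reproduce, essentially word for word, the argument proving Proposition \ref{pro-boch-k}, after recording the three structural facts that make the substitution of $W^{\pm}$ for $W$ legitimate. First, the self-dual and anti-self-dual parts $W^{\pm}$ of $W$ are genuine tensor fields sharing all the algebraic symmetries of the Riemann tensor; since the Hodge operator $\star$ on $\Lambda^{2}$ is $\nabla$-parallel (the metric and the orientation being parallel), $\star$ commutes with covariant differentiation, so $\nabla^{k}W^{\pm}=\pa{\nabla^{k}W}^{\pm}$ for every $k$, and by the orthogonality of $\Lambda^{2}=\Lambda^{+}\oplus\Lambda^{-}$ one has $\abs{\nabla^{k}W}^{2}=\abs{\nabla^{k}W^{+}}^{2}+\abs{\nabla^{k}W^{-}}^{2}$. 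Second, on a four dimensional Einstein manifold $\diver W=0$, and in dimension four this condition splits as $\diver W^{+}=0$ and $\diver W^{-}=0$ (see \cite{derd}, cf. \eqref{eq-divz}); in particular $W^{\pm}_{tijk,t}=0$. Third, the commutation identity of Lemma \ref{LE_commutationKorderWeyl} was established using only the symmetries of $W$ and the Ricci identity — never the value of $\diver W$ — and therefore holds verbatim with $W$ replaced by $W^{\pm}$, the \emph{full} Riemann tensor still appearing in the curvature terms.

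Granting these facts, I would run the computation exactly as in the proof of Proposition \ref{pro-boch-k}. From $\abs{\nabla^{k}W^{\pm}}^{2}=W^{\pm}_{\alpha\beta\gamma\delta,i_{1}\cdots i_{k}}W^{\pm}_{\alpha\beta\gamma\delta,i_{1}\cdots i_{k}}$, differentiating twice gives
\[
\tfrac12\Delta\abs{\nabla^{k}W^{\pm}}^{2}=\abs{\nabla^{k+1}W^{\pm}}^{2}+W^{\pm}_{\alpha\beta\gamma\delta,i_{1}\cdots i_{k}}\,W^{\pm}_{\alpha\beta\gamma\delta,i_{1}\cdots i_{k}tt}.
\]
Commuting the two indices $tt$ past $i_{k}$ twice — using the $W^{\pm}$-version of Lemma \ref{LE_commutationKorderWeyl}, the Einstein condition and $W^{\pm}_{tijk,t}=0$ — rewrites $W^{\pm}_{\alpha\beta\gamma\delta,i_{1}\cdots i_{k}tt}$ as $W^{\pm}_{\alpha\beta\gamma\delta,i_{1}\cdots i_{k-1}tti_{k}}+2(\mathfrak{R}_{1})_{t}+\frac{R}{4}W^{\pm}_{\alpha\beta\gamma\delta,i_{1}\cdots i_{k}}$, with $\mathfrak{R}_{1},\mathfrak{R}_{2}$ the same remainders as in Proposition \ref{pro-boch-k} but now built from $W^{\pm}$ and $Riem$. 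Recognizing $W^{\pm}_{\alpha\beta\gamma\delta,i_{1}\cdots i_{k-1}tti_{k}}$ as a component of $\nabla\Delta\nabla^{k-1}W^{\pm}$, and carrying out the contraction $W^{\pm}_{\alpha\beta\gamma\delta,i_{1}\cdots i_{k}}(\mathfrak{R}_{1})_{t}$ precisely as there, one assembles \eqref{BochnerBIGpm}.

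The only point demanding any care — the ``obstacle'', mild as it is — is the bookkeeping of the curvature terms: in the commutation step $W^{\pm}$ is contracted against the \emph{full} Riemann tensor $R_{p\alpha i_{k}t}$, not against $W^{\pm}$, so one must resist re-expanding $Riem=W^{+}+W^{-}+\frac{R}{12}\,g\kn g$ before the final formula is reached; it is precisely keeping $Riem$ intact that reproduces the coefficients $8$ and $2$ in front of the curvature sums, identical to those of \eqref{BochnerBIG}. Everything else is a transcription of the proof already carried out for $W$.
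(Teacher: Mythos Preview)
Your proposal is correct and matches the paper's own treatment, which simply observes that ``with no changes in the proofs, all the previous formulas hold also for the self-dual and anti-self-dual part of Weyl'' and states Proposition~\ref{pro-boch-k-pm} without further argument. Your three structural remarks (parallelism of $\star$, the $W^{\pm}$-version of Lemma~\ref{LE_commutationKorderWeyl}, and keeping the full $Riem$ in the curvature terms) are exactly the justifications underlying that observation; note only that the vanishing of $\diver W^{\pm}$, while true, is not actually invoked in the commutation argument---what is used is $R_{tijk,t}=0$, which follows from the Einstein condition alone.
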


\

\section{The second Bochner type formula: proof of Theorem \ref{teo-sbf}} 

In this section we first prove Theorem \ref{teo-sbf}, namely we show that the following second Bochner type formula,
\begin{equation*}
\frac{1}{2}\Delta |\nabla W|^{2} = |\nabla^{2} W|^{2}+\frac{13}{12}R|\nabla W|^{2}-10\,W_{ijkl}W_{ijpq,t}W_{klpq,t} \,,
\end{equation*}
holds on every four dimensional Einstein manifold. 

\begin{proof}[Proof of Theorem \ref{teo-sbf}] From Proposition \ref{pro-boch} we know that
$$
\frac{1}{2}\Delta|\nabla W|^{2} = |\nabla^{2} W|^{2} + \langle \nabla W, \nabla \Delta W \rangle +\frac{R}{4}|\nabla W|^{2} + 8 W_{ijkl,s}W_{rjkl,t}W_{rist} + \frac{2}{3}R\,W_{ijkl,s}W_{sjkl,i}\,.
$$
Now observe that, using Lemma \ref{lem_GradWeylNorm}, one has
$$
W_{ijkl,s}W_{sjkl,i}=W_{ijkl,s}W_{ijks,l}=\frac{1}{2}|\nabla W|^{2}\,.
$$
Moreover, since renaming indexes we have $W_{ijkl,s}W_{rjkl,t}W_{rist}=W_{ijkl}W_{jpqt,k}W_{ipqt,l}$, from Lemma \ref{lem-key1}, we get
$$
W_{ijkl,s}W_{rjkl,t}W_{rist} = -\frac{1}{2}W_{ijkl}W_{ijpq,t}W_{klpq,t}\,.
$$
Now, Theorem \ref{teo-sbf} follows from this lemma.
\begin{lemma}\label{lem-paolo} On every four dimensional Riemannian manifold with harmonic Weyl curvature, one has
$$
\langle \nabla W, \nabla \Delta W \rangle = \frac{1}{2}R|\nabla W|^{2} -6 W_{ijkl}W_{ijpq,t}W_{klpq,t}\,.
$$
\end{lemma}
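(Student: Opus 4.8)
The plan is to feed the four-dimensional formula \eqref{eq-bw} for the rough Laplacian of $W$ into $\langle\nabla W,\nabla\Delta W\rangle=W_{ijkl,t}\,(\Delta W)_{ijkl,t}$ and then reduce every cubic contraction that appears to a multiple of the model term $\mathcal C:=W_{ijkl}W_{ijpq,t}W_{klpq,t}$ by means of the algebraic identities collected in Section~\ref{sec3}.

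Concretely, writing \eqref{eq-bw} as $\Delta W_{ijkl}=\tfrac R2 W_{ijkl}-2Q_{ijkl}$ with $Q_{ijkl}:=W_{ipjq}W_{pqkl}-W_{ipql}W_{jpqk}+W_{ipqk}W_{jqpl}$, I would take one covariant derivative and contract against $W_{ijkl,t}$; the term coming from $\tfrac R2 W_{ijkl}$ then contributes $\tfrac R2|\nabla W|^2$ (the scalar curvature being constant in the cases of interest, so that the stray $\nabla R$ term drops out after contraction with $W_{ijkl}W_{ijkl,t}$), giving
\[
\langle\nabla W,\nabla\Delta W\rangle=\tfrac R2\,|\nabla W|^2-2\,W_{ijkl,t}\,Q_{ijkl,t}.
\]
Thus the statement becomes the purely algebraic identity $W_{ijkl,t}Q_{ijkl,t}=3\,\mathcal C$. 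Expanding $Q_{ijkl,t}$ by the Leibniz rule and contracting with $W_{ijkl,t}$ produces six cubic terms, each of the schematic shape $W_{\ast}\,W_{\ast,t}\,W_{\ast,t}$: one undifferentiated Weyl factor paired with two first covariant derivatives of $W$ sharing the index $t$. Using the symmetries of $W$, the first Bianchi identity, and relabelings — and the fact that for each fixed $t$ the tensor $W_{\ast,t}$ is itself an algebraic Weyl tensor, in particular totally trace-free — each of these six terms can be brought into one of two canonical shapes: the one of Lemma~\ref{lem-key2} (derivative index external to the undifferentiated factor, valid on any four-manifold) or the one of Lemma~\ref{lem-key1} (both derivative indices contracted into the undifferentiated factor, which is exactly where the harmonic Weyl hypothesis enters through \eqref{eq-divz}); the purely algebraic rearrangements are handled by \eqref{WWW}, \eqref{WeylWeylMetric}, and Lemma~\ref{lem_GradWeylNorm}. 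Summing the contributions — each being $\mathcal C$ or $\pm\tfrac12\mathcal C$ after reduction — should collapse to $W_{ijkl,t}Q_{ijkl,t}=3\,\mathcal C$, whence the lemma. An equivalent route is to split $W=W^{+}+W^{-}$, observe as in the proofs of Lemmas~\ref{lem-key1}--\ref{lem-key2} that all mixed $\pm$ contractions vanish, and carry out the whole computation for $W^{\pm}$ in Derdzinski's eigenbasis \eqref{eq-derder}, where by \eqref{eq-nqder} and \eqref{eqrhs} every contraction reduces to an elementary identity in the eigenvalues $\lambda^{\pm},\mu^{\pm},\nu^{\pm}$ and the one-forms $a^{\pm},b^{\pm},c^{\pm}$.

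The main obstacle is precisely this bookkeeping step: for each of the six cubic terms one must correctly identify which slots the two $\,{}_{,t}$ indices occupy, so that the appropriate key lemma applies, and one must check that the reduction of the ``derivative on a contracted index'' terms genuinely invokes \eqref{eq-divz} — this is where the algebraic peculiarities of dimension four are indispensable, just as in the proof of Theorem~\ref{teo-sbf}. Since the final coefficient $-6$ is sensitive to the signs of all six contributions, the computation has to be carried through in full, with no shortcut that avoids treating each term; a useful consistency check along the way is the undifferentiated analogue $W_{ijkl}Q_{ijkl}=\tfrac32\,W_{ijkl}W_{ijpq}W_{klpq}$, which follows by comparing \eqref{eq-bw} with \eqref{nice}.
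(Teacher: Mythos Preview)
Your overall strategy coincides with the paper's: differentiate \eqref{eq-bw}, contract against $\nabla W$, and reduce the cubic contractions to multiples of $\mathcal C$. Where you go astray is in the diagnosis of which tools are needed and where the hypothesis enters.

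In every one of the six terms of $W_{ijkl,t}\,Q_{ijkl,t}$ the covariant derivative index $t$ is contracted between the two \emph{differentiated} factors and never touches the undifferentiated Weyl. Hence none of them has the shape of Lemma~\ref{lem-key1}, in which the derivative indices sit on slots of the undifferentiated factor; only Lemma~\ref{lem-key2} is required. The harmonic Weyl hypothesis therefore does \emph{not} enter through \eqref{eq-divz} here --- it enters solely through the validity of \eqref{eq-bw} itself (which rests on the second Bianchi identity \eqref{2ndBIWeyl} for $W$). Similarly, \eqref{WWW}, \eqref{WeylWeylMetric}, and Lemma~\ref{lem_GradWeylNorm} play no role in this reduction.

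The paper's argument is shorter than the one you sketch because it first rewrites \eqref{eq-bw}, via the first Bianchi identity, as
\[
\Delta W_{ijkl}=\tfrac{R}{2}\,W_{ijkl}-W_{ijpq}W_{klpq}-2\bigl(W_{ipkq}W_{jplq}-W_{iplq}W_{jpkq}\bigr).
\]
After Leibniz, the swap $k\leftrightarrow l$ merges the last two quadratic pieces, and the swap $(i,k)\leftrightarrow(j,l)$ identifies the two halves of each Leibniz pair; one is left with just two distinct cubic contractions, namely $2\mathcal C$ from the $W_{ijpq}W_{klpq}$ piece and $8\,W_{ijkl}\,W_{ipkq,t}\,W_{jplq,t}$ from the rest. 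A single application of Lemma~\ref{lem-key2} converts the latter into $4\mathcal C$, giving $-2\mathcal C-4\mathcal C=-6\mathcal C$ with no further work. Your six-term route would of course arrive at the same place, but with more relabelings and without ever meeting a term of Lemma~\ref{lem-key1} type.
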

\begin{proof}
First we observe that equation \eqref{eq-bw}, using the first Bianchi identity, can be rewritten as
\begin{equation*}
\Delta W_{ijkl}=W_{ijkl, tt} = \frac{R}{2}W_{ijkl}-W_{ijpq}W_{klpq}-2(W_{ipkq}W_{jplq}-W_{iplq}W_{jpkq}), \,
\end{equation*}
which implies, by the symmetries of the Weyl tensor,
\begin{align*}
\langle \nabla W, \nabla \Delta W \rangle &= W_{ijkl, t}\sq{\frac{R}{2}W_{ijkl}-W_{ijpq}W_{klpq}-2(W_{ipkq}W_{jplq}-W_{iplq}W_{jpkq})}_t\\ &=\frac{R}{2}\abs{\nabla W}^2-W_{ijkl, t}\pa{W_{ijpq}W_{klpq}}_t-4W_{ijkl, t}\pa{W_{ipkq}W_{jplq}}_t
\\&=\frac{R}{2}\abs{\nabla W}^2-W_{ijkl, t}W_{ijpq, t}W_{klpq}-W_{ijkl, t}W_{ijpq}W_{klpq, t}\\&-4W_{ijkl, t}W_{ipkq, t}W_{jplq}-4W_{ijkl, t}W_{ipkq}W_{jplq, t}\\&=\frac{R}{2}\abs{\nabla W}^2-W_{pqkl, t}W_{pqij, t}W_{klij}-W_{ijpq, t}W_{ijkl}W_{klpq, t}\\&-4W_{jilk, t}W_{piqk, t}W_{jplq}-4W_{jilk, t}W_{jplq}W_{piqk, t},
\end{align*}
where in the last line the change of indexes exploits again the symmetries of $W$. Thus we have
\begin{align*}
\langle \nabla W, \nabla \Delta W \rangle =\frac{R}{2}\abs{\nabla W}^2-2W_{ijpq, t}W_{ijkl}W_{klpq, t}-8W_{jilk, t}W_{piqk, t}W_{jplq},
\end{align*}
which immediately implies the thesis using Lemma \ref{lem-key2}.
\end{proof}
This concludes the proof of Theorem \ref{teo-sbf}.
\end{proof}

\

\section{Some integral estimates} \label{sec-int}

In this section, starting from Theorem \ref{teo-sbf}, we derive some new integral identities for the Weyl tensor for Einstein manifolds in dimension four. First of all we have the following identity (which will imply Theorem \ref{thm-intbochintro} in the introduction).

\begin{proposition}\label{prop1} On a four dimensional compact Einstein manifold we have
$$
\int |\nabla^{2}W|^{2} -\frac{5}{3}\int|\Delta W|^{2} + \frac{R}{4}\int |\nabla W|^{2} = 0\,.
$$
\end{proposition}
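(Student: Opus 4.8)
The plan is to integrate the pointwise identity of Theorem \ref{teo-sbf} over the closed manifold $M$ and then eliminate the resulting cubic term by means of a second integral relation coming from the Laplacian equation for $W$. First I would integrate
$$
\frac{1}{2}\Delta |\nabla W|^{2} = |\nabla^{2} W|^{2}+\frac{13}{12}R|\nabla W|^{2}-10\,W_{ijkl}W_{ijpq,t}W_{klpq,t}
$$
over $M$; since $M$ is compact without boundary, $\int_M \Delta |\nabla W|^2 = 0$ by the divergence theorem, so that
$$
0 = \int |\nabla^{2}W|^{2}+\frac{13}{12}R\int|\nabla W|^{2}-10\int W_{ijkl}W_{ijpq,t}W_{klpq,t}\,.
$$
The only nontrivial point is then to express $\int W_{ijkl}W_{ijpq,t}W_{klpq,t}$ in terms of $\int |\Delta W|^{2}$ and $\int |\nabla W|^{2}$.

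For this I would invoke Lemma \ref{lem-paolo} (applicable since an Einstein metric has harmonic Weyl curvature), which gives the pointwise identity
$$
\langle \nabla W, \nabla \Delta W \rangle = \frac{1}{2}R|\nabla W|^{2} -6\,W_{ijkl}W_{ijpq,t}W_{klpq,t}\,.
$$
Integrating over $M$ and integrating by parts on the left — using that the rough Laplacian is formally self-adjoint, so $\int \langle \nabla W, \nabla \Delta W\rangle = -\int \langle \Delta W, \Delta W\rangle = -\int |\Delta W|^{2}$ — one obtains
$$
\int W_{ijkl}W_{ijpq,t}W_{klpq,t} = \frac{1}{6}\int |\Delta W|^{2} + \frac{R}{12}\int |\nabla W|^{2}\,.
$$

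Substituting this into the integrated Bochner identity finishes the proof: the coefficient of $\int|\nabla W|^{2}$ becomes $\frac{13}{12}R - \tfrac{10}{12}R = \frac{R}{4}$, while the cubic term contributes $-\tfrac{10}{6}\int|\Delta W|^{2} = -\tfrac{5}{3}\int|\Delta W|^{2}$, yielding
$$
\int |\nabla^{2}W|^{2} -\frac{5}{3}\int|\Delta W|^{2} + \frac{R}{4}\int |\nabla W|^{2} = 0\,.
$$
I do not expect any serious obstacle here: the argument is entirely mechanical once Theorem \ref{teo-sbf} and Lemma \ref{lem-paolo} are available, the only things to be careful about being the legitimacy of the integrations by parts (guaranteed by compactness) and the bookkeeping of the numerical coefficients. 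The same computation, applied to the formula for $W^{\pm}$ in place of $W$ (which holds by the remark that all the relevant identities split into self-dual and anti-self-dual parts), gives Theorem \ref{thm-intbochintro}.
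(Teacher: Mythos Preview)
Your proof is correct and follows essentially the same route as the paper: integrate the second Bochner formula of Theorem~\ref{teo-sbf}, then use Lemma~\ref{lem-paolo} together with the integration by parts $\int\langle\nabla W,\nabla\Delta W\rangle=-\int|\Delta W|^{2}$ to convert the cubic term into the desired combination of $\int|\Delta W|^{2}$ and $\int|\nabla W|^{2}$; the arithmetic you display matches the paper's equation~\eqref{eq-deltas} exactly. One small caveat on your closing remark: the passage to $W^{\pm}$ is not quite automatic, since the rough Bochner formula for $W^{\pm}$ involves the full curvature $R_{rist}$ rather than $W^{\pm}_{rist}$, and the paper needs the vanishing of the mixed terms (equation~\eqref{eq-mix}) to reduce $W^{+}_{ijkl,s}W^{+}_{rjkl,t}W_{rist}$ to a purely self-dual expression before Lemma~\ref{lem-key1} applies---see the proof of Theorem~\ref{teo-idsa}.
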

\begin{proof} We simply integrate over $M$ the second Bochner type formula and use Lemma \ref{lem-paolo} to get
\begin{equation}\label{eq-deltas}
\int |\Delta W|^{2} = -\int \langle \nabla W, \nabla \Delta W \rangle = -\frac{R}{2}\int |\nabla W|^{2} + 6 \int W_{ijkl}W_{ijpq,t}W_{klpq,t} \,,
\end{equation}
i.e.
$$
10\int W_{ijkl}W_{ijpq,t}W_{klpq,t} = \frac{5}{3}\int|\Delta W|^{2}+\frac{5}{6}R\int|\nabla W|^{2}\,.
$$
\end{proof}
\begin{rem} We will see in the next section that this formula also holds for $W^{\pm}$.
\end{rem}

Now, we want to estimate the Hessian in terms of the Laplacian of Weyl. Of course, one has
$$
|\nabla^{2} W|^{2} \,\geq\, \frac{1}{4}\, |\Delta W|^{2} \,.
$$
In the next proposition we will show that on compact Einstein manifolds one has an improved estimate in the $L^{2}$-integral sense.

\begin{theorem}\label{pro-imprhess} On a four dimensional compact Einstein manifold we have
$$
\int |\nabla^{2}W|^{2} \geq \frac{5}{12} \int |\Delta W|^{2}\,,
$$
with equality if and only if $\nabla W\equiv 0$.
\end{theorem}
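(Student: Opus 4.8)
The plan is to obtain the inequality from a sharp \emph{pointwise} orthogonal decomposition of the Hessian $\nabla^{2}W$, and then to fix the constant by an integration‑by‑parts computation that is already at our disposal from the previous sections. First I would regard, for fixed $i,j,k,l$, the object $W_{ijkl,st}$ as a $2$‑tensor in the last two indices $s,t$ and split it orthogonally into trace part, symmetric trace‑free part, and skew part:
\begin{equation*}
W_{ijkl,st} \,=\, \tfrac14 g_{st}\,\Delta W_{ijkl} \,+\, Z_{ijkl,st} \,+\, \tfrac12\big(W_{ijkl,st}-W_{ijkl,ts}\big),
\end{equation*}
where $Z_{ijkl,st}$ is by definition the symmetric trace‑free (in $s,t$) part and, by Lemma \ref{lem-comsec}, the skew part is purely algebraic in $W$ on a four dimensional Einstein manifold. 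Since the three summands are mutually orthogonal and $n=4$, taking squared norms and summing over all indices gives the pointwise identity
\begin{equation*}
|\nabla^{2}W|^{2} \,=\, |Z|^{2} + \tfrac14|\Delta W|^{2} + \tfrac14\,\big|W_{ijkl,st}-W_{ijkl,ts}\big|^{2}.
\end{equation*}

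Next I would integrate over the compact manifold $M$ and evaluate $\int|W_{ijkl,st}-W_{ijkl,ts}|^{2}$ exactly. By Corollary \ref{cor-d2},
\begin{equation*}
\int W_{ijkl,s}W_{rjkl,t}W_{rist}\,=\, -\tfrac18\int \big|W_{ijkl,st}-W_{ijkl,ts}\big|^{2} - \tfrac{R}{24}\int|\nabla W|^{2};
\end{equation*}
on the other hand, relabelling indices exactly as in the proof of Theorem \ref{teo-sbf} one has $W_{ijkl,s}W_{rjkl,t}W_{rist}=W_{ijkl}W_{jpqt,k}W_{ipqt,l}$, so Lemma \ref{lem-key1} together with equation \eqref{eq-deltas} from the proof of Proposition \ref{prop1} yields
\begin{equation*}
\int W_{ijkl,s}W_{rjkl,t}W_{rist} \,=\, -\tfrac12\int W_{ijkl}W_{ijpq,t}W_{klpq,t} \,=\, -\tfrac1{12}\int|\Delta W|^{2} - \tfrac{R}{24}\int|\nabla W|^{2}.
\end{equation*}
Comparing the two expressions gives $\int|W_{ijkl,st}-W_{ijkl,ts}|^{2}=\tfrac23\int|\Delta W|^{2}$, and substituting into the integrated decomposition produces
\begin{equation*}
\int|\nabla^{2}W|^{2} \,=\, \int|Z|^{2} + \tfrac5{12}\int|\Delta W|^{2} \,\ge\, \tfrac5{12}\int|\Delta W|^{2},
\end{equation*}
which is the claimed estimate.

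For the equality case, equality forces $Z\equiv 0$, i.e. $W_{ijkl,st}+W_{ijkl,ts}=\tfrac12 g_{st}\Delta W_{ijkl}$ at every point, and comparing $\int|\nabla^{2}W|^{2}=\tfrac5{12}\int|\Delta W|^{2}$ with Proposition \ref{prop1} also gives $R\int|\nabla W|^{2}=5\int|\Delta W|^{2}$. If $R\le 0$ this immediately yields $\int|\Delta W|^{2}=0$, hence $\Delta W\equiv 0$, and then $\int|\nabla W|^{2}=-\int\langle W,\Delta W\rangle=0$ after integration by parts, so $\nabla W\equiv 0$; the converse is trivial. I expect the case $R>0$ to be the main obstacle: there one must exploit the overdetermined relation $W_{ijkl,st}+W_{ijkl,ts}=\tfrac12 g_{st}\Delta W_{ijkl}$ directly. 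Contracting it with $W_{ijkl}$ and manipulating $\hess|W|^{2}$ in the usual way gives $\hess_{st}|W|^{2}=2\langle \nabla_{s}W,\nabla_{t}W\rangle+\tfrac12 g_{st}\langle W,\Delta W\rangle$, whose trace‑free part identifies the trace‑free part of $\hess|W|^{2}$ with twice the (pointwise nonnegative) trace‑free part of the matrix $\langle \nabla_{s}W,\nabla_{t}W\rangle$; a further argument — either a maximum‑principle argument for $|W|^{2}$ using this identity, or one more differentiation of the overdetermined system combined with the second Bianchi identity for $W$ (Lemma \ref{lemma_fake2ndBianchiWeyl} with $C\equiv 0$) — is then needed to force $\nabla W\equiv 0$. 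Finally, since $\nabla W=\nabla W^{+}\oplus\nabla W^{-}$ orthogonally and every ingredient above has its self‑dual/anti‑self‑dual counterpart (Lemmas \ref{lem-key1} and \ref{lem-comsec}, equation \eqref{eq-deltas}, Proposition \ref{prop1} and Corollary \ref{cor-d2}), the same estimate and rigidity statement hold verbatim for $W^{\pm}$, which is what feeds into Proposition \ref{thm-gap}.
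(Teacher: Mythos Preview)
Your derivation of the inequality is correct and essentially identical to the paper's: the orthogonal decomposition of $W_{ijkl,st}$ into trace, symmetric trace-free, and skew parts is exactly what the paper uses (phrased there as the matrix inequality $|A|^{2}\geq(\operatorname{trace}A)^{2}/4$), and your computation of $\int|W_{ijkl,st}-W_{ijkl,ts}|^{2}=\tfrac{2}{3}\int|\Delta W|^{2}$ via Corollary~\ref{cor-d2}, Lemma~\ref{lem-key1} and \eqref{eq-deltas} is precisely the paper's Lemma~\ref{lem-1}. Your treatment of the equality case for $R\leq 0$ via Proposition~\ref{prop1} is a slight variation on the paper's argument (which instead integrates the second Bochner formula directly), but it is correct.

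The genuine gap is the equality case for $R>0$, which you yourself flag as ``the main obstacle'' and do not resolve. Your sketched route through $\hess|W|^{2}$ and a maximum-principle or further-differentiation argument is not the paper's approach, and it is unclear that it can be made to work. What the paper actually does is the following: from the overdetermined relation $W_{ijkl,st}+W_{ijkl,ts}=\tfrac12 g_{st}\Delta W_{ijkl}$ one takes a divergence in $t$, contracts with $W_{ijkl,s}$, and after using the commutation rules, Lemma~\ref{lem_GradWeylNorm}, Lemma~\ref{lem-key1}, Lemma~\ref{lem-paolo} and Theorem~\ref{teo-sbf} arrives at the \emph{pointwise} identity
\[
W_{ijkl}W_{ijpq,t}W_{klpq,t}\,=\,\tfrac{7}{60}R\,|\nabla W|^{2}.
\]
This says that wherever $\nabla W\neq 0$ the positive constant $\tfrac{7}{120}R$ is an eigenvalue of $W$ acting on $\Lambda^{2}$. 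An algebraic argument (using the first Bochner formula \eqref{nice} and the sign of $\det W$) rules out this eigenvalue being $\mu$, so it must be $\nu=\tfrac{7}{120}R<\tfrac{R}{6}$; this forces positive isotropic curvature on an open dense set, hence everywhere by analyticity, and then Micallef--Wang's classification of Einstein manifolds with positive isotropic curvature gives a space-form contradiction. This chain of reasoning---the pointwise eigenvalue constraint, the exclusion of the middle eigenvalue, and the appeal to positive isotropic curvature---is the missing idea in your proposal.
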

\begin{proof} In some local basis, using the inequality for a $4\times4$ matrix $|A|^{2}\geq (\operatorname{trace} A)^{2}/4$, one has
\begin{align*}
|\nabla^{2} W|^{2} &= \sum_{ijklst} W_{ijkl,st}^{2} = \frac{1}{4}\sum_{ijklst} \Big(W_{ijkl,st}-W_{ijkl,ts}\Big)^{2}+\frac{1}{4}\sum_{ijklst} \Big(W_{ijkl,st}+W_{ijkl,ts}\Big)^{2}\\
&\geq \frac{1}{4}\sum_{ijklst} \Big(W_{ijkl,st}-W_{ijkl,ts}\Big)^{2}+\frac{1}{4}|\Delta W|^{2}
\end{align*}
with equality if and only if 
\begin{equation}\label{eq-equality}
W_{ijkl,st}+W_{ijkl,ts} = \frac{1}{4}\Big(\operatorname{trace}\big(W_{ijkl,st}+W_{ijkl,ts}\big)\Big)\delta_{st}=\frac{1}{2}\big(\Delta W_{ijkl}\big) \delta_{st}
\end{equation}
at every point. The final estimate now follows from the following lemma.
\begin{lemma}\label{lem-1} On a four dimensional Einstein manifold we have
$$
\int \big|W_{ijkl,st}-W_{ijkl,ts}\big|^{2} = \frac{2}{3} \int |\Delta W|^{2} \,.
$$
\end{lemma}
\begin{proof} From Corollary \ref{cor-d2}, we have
$$
\int W_{ijkl,s}W_{rjkl,t}W_{rist}= -\frac{1}{8}\int \big|W_{ijkl,st}-W_{ijkl,ts}\big|^{2} - \frac{R}{24}\int|\nabla W|^{2}\,.
$$
Moreover, from Lemma \ref{lem-key1}, we know that
$$
W_{ijkl,s}W_{rjkl,t}W_{rist} =-\frac{1}{2} W_{ijkl}W_{ijpq,t}W_{klpq,t}\,.
$$
Thus, one has
$$
\int \big|W_{ijkl,st}-W_{ijkl,ts}\big|^{2} = 4\int W_{ijkl}W_{ijpq,t}W_{klpq,t} -\frac{R}{3}\int|\nabla W|^{2} = \frac{2}{3}\int |\Delta W|^{2} \,,
$$
where in the last equality we used equation \eqref{eq-deltas}.
\end{proof}
This concludes the proof of the inequality case. As far as the equality is concerned, from equation \eqref{eq-equality}, we know that, at every point, it holds
$$
W_{ijkl,st}+W_{ijkl,ts} =\frac{1}{2}\big(\Delta W_{ijkl}\big) \delta_{st}\,.
$$
Taking the divergence with respect to the index $t$, using the second commutation formula in Lemma \ref{lem-comsec} and the fact that Weyl is divergence free, we obtain
\begin{align*}
\frac{1}{2}W_{ijkl,tts}+W_{ijkl,stt}+W_{vjkl, t}R_{vist}+W_{ivkl, t}R_{vjst}+W_{ijvl, t}R_{vkst}+W_{ijkv, t}R_{vlst}+\frac{R}{4}W_{ijkl, s}=0\,.
\end{align*}
Contracting with $W_{ijkl,s}$ and using the decomposition of the Riemann tensor, we obtain
\begin{align*}
0 &= \frac{1}{2}\langle \nabla W,\nabla \Delta W\rangle + \langle \nabla W, \Delta \nabla W\rangle + 4 W_{ijkl,s}W_{rjkl,t}W_{rist} + \frac{R}{3}W_{ijkl,s}W_{sjkl,i}+\frac{1}{4}R|\nabla W|^{2} \\
&= \frac{1}{2}\langle \nabla W,\nabla \Delta W\rangle + \frac{1}{2}\Delta|\nabla W|^{2}-|\nabla^{2}W|^{2} + 4 W_{ijkl,s}W_{rjkl,t}W_{rist} + \frac{R}{3}W_{ijkl,s}W_{sjkl,i}+\frac{1}{4}R|\nabla W|^{2}\\
&= \frac{21}{12}R|\nabla W|^{2}-15 W_{ijkl}W_{ijpq,t}W_{klpq,t} \,,
\end{align*}
where we have used Lemma \ref{lem_GradWeylNorm} and \ref{lem-key1}, Theorem \ref{teo-sbf} and Lemma \ref{lem-paolo}. Hence, we have proved that, at every point, one has
\begin{equation}\label{eq-equality}
W_{ijkl}W_{ijpq,t}W_{klpq,t} \,=\, \frac{7}{60}R|\nabla W|^{2} \,.
\end{equation}
From the second Bochner formula (Theorem \ref{teo-sbf}) one has
$$
\frac{1}{2}\Delta |\nabla W|^{2} = |\nabla^{2}W|^{2} -\frac{1}{12}R|\nabla W|^{2} \,.
$$
In particular, since $R$ is constant, if $R\leq 0$, then integrating over $M$ we obtain $\nabla^{2}W \equiv 0$, which implies $\Delta W \equiv 0$ and, by compactness, $\nabla W \equiv 0$. Now assume that $R>0$. Let $\alpha$ be the two form 
$$
\alpha_{ij}^{(pqt)}:=\frac{W_{ijpq,t}}{|\nabla W|} \,.
$$
defined where $\nabla W\neq 0$. Note that $|\alpha|=1$. By \eqref{eq-equality}, one has
$$
W_{ijkl} \,\alpha_{ij}^{(pqt)}\alpha_{kl}^{(pqt)} = \frac{7}{60}R
$$
at every point where $\nabla W\neq 0$. In particular, by normalization, the positive constant $7R/120$ is an eigenvalue of $W$ viewed as an operator on $\Lambda^{2}$. Since it is positive, either one has
$$
\mu:=\mu^{+}+\mu^{-}=\frac{7}{120}R\quad\,\hbox{or}\,\quad \nu:=\nu^{+}+\nu^{-}=\frac{7}{120}R\,.
$$
First of all we claim that $\mu$ cannot be positive. In fact, if $\mu>0$, then $det(W)=\lambda \mu \nu$ has to be negative, where $\nabla W\neq 0$. Since $W$ is trace free, this is equivalent to say that
$$
W_{ijkl}W_{ijpq}W_{klpq} < 0\,.
$$
From equation \eqref{niceself} one has
$$
\frac{1}{2}\Delta |W|^{2} = |\nabla W|^{2}+\frac{R}{2}|W|^{2}-3W_{ijkl}W_{ijpq}W_{klpq} \,.
$$
Assume that $\nabla W \not\equiv 0$. Let $M_{\varepsilon}:=\{p\in M: |\nabla W|^{2}(p)\leq \varepsilon \}$. Since $g$ is Einstein, in harmonic coordinates $g$ is real analytic, and so is the function $|\nabla W|^{2}$.  In particular $\operatorname{Vol}(M_{\varepsilon})\rightarrow 0$ as $\varepsilon \rightarrow 0$.  Integrating over $M$ the Bochner formula for $W^{+}$ \eqref{niceself}, we obtain
\begin{align*}
0 &= \int_{M}|\nabla W|^{2} + \frac{R}{2}\int_{M}|W|^{2}-3\int_{M\setminus M_{\varepsilon}}W_{ijkl}W_{ijpq}W_{klpq}-3\int_{M_{\varepsilon}}W_{ijkl}W_{ijpq}W_{klpq} \\
&\geq \int_{M}|\nabla W|^{2} + \frac{R}{2}\int_{M}|W|^{2}-3\Big(\sup_{M}|W|^{3}\Big)\operatorname{Vol}(M_{\varepsilon}) \,,
\end{align*}
where we have used the fact that $W_{ijkl}W_{ijpq}W_{klpq}<0$ on $M\setminus M_{\varepsilon}$. Letting $\varepsilon\rightarrow 0$, we obtain $W\equiv 0$, hence $\nabla W \equiv 0$, so a contradiction. This argument shows that, necessarily,
$$
\nu = \frac{7}{120}R \,,
$$
where $\nabla W\neq 0$. In particular $\nu < R/6$, which implies that, where $\nabla W\neq 0$, $g$ has strictly positive isotropic curvature (see \cite{micwan}). Assume that $\nabla W \not\equiv 0$. By analyticity this condition is true on a dense subset. Thus, by continuity $(M,g)$ is an Einstein manifold with positive isotropic curvature, hence isometric to a quotient of the round sphere $\mathbb{S}^{4}$ (see again \cite{micwan}). In particular $\nabla W \equiv 0$, a contradiction. This concludes the proof of the equality case.
\end{proof}

From Propositions \ref{prop1} and \ref{pro-imprhess} we immediately get the following gap result in the form of a Poincar\'e type inequality.

\begin{cor} On a four dimensional Einstein manifold with positive scalar curvature $R$ we  have
$$
\int|\nabla^{2} W|^{2} \geq \frac{R}{12} \int |\nabla W|^{2} \,,
$$
with equality if and only if $\nabla W \equiv 0$. 
\end{cor}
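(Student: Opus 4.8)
The plan is to deduce the corollary purely formally from the integral identity of Proposition \ref{prop1} and the improved Hessian--Laplacian estimate of Theorem \ref{pro-imprhess}; no new computation is needed. I would first note that a (complete) four dimensional Einstein manifold with $R>0$ satisfies $\ricc=\tfrac{R}{4}\,g>0$ and is therefore compact by the Bonnet--Myers theorem, so that both Proposition \ref{prop1} and Theorem \ref{pro-imprhess} apply.

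Next I would use Proposition \ref{prop1} to eliminate the Laplacian term. Solving that identity for $\int|\Delta W|^{2}$ gives
$$
\int|\Delta W|^{2}=\frac{3}{5}\int|\nabla^{2}W|^{2}+\frac{3R}{20}\int|\nabla W|^{2}\,.
$$
Plugging this into the inequality $\int|\nabla^{2}W|^{2}\geq\frac{5}{12}\int|\Delta W|^{2}$ of Theorem \ref{pro-imprhess} yields
$$
\int|\nabla^{2}W|^{2}\;\geq\;\frac{1}{4}\int|\nabla^{2}W|^{2}+\frac{R}{16}\int|\nabla W|^{2}\,,
$$
and, since all the coefficients are positive, rearranging produces exactly $\int|\nabla^{2}W|^{2}\geq\frac{R}{12}\int|\nabla W|^{2}$.

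For the rigidity statement I would track equalities along this chain. The substitution above is an exact identity, so equality in the final inequality is equivalent to equality in the estimate of Theorem \ref{pro-imprhess}; by that theorem this happens if and only if $\nabla W\equiv 0$. Conversely, $\nabla W\equiv 0$ forces $\nabla^{2}W\equiv 0$, so both sides of the inequality vanish and equality trivially holds. The only point demanding any attention is this equality bookkeeping---checking that the elimination of $|\Delta W|^{2}$ is an exact identity, so that equality is neither lost nor spuriously created when passing between the displays---but there is no genuine analytic difficulty in the argument, all the substance having already been absorbed into Proposition \ref{prop1} and Theorem \ref{pro-imprhess}.
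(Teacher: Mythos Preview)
Your proof is correct and follows exactly the approach the paper indicates: the corollary is obtained by combining the integral identity of Proposition~\ref{prop1} with the inequality of Theorem~\ref{pro-imprhess}, and you have simply written out the elementary algebra and the equality bookkeeping that the paper leaves implicit. The Bonnet--Myers remark to secure compactness is a sensible addition, since the corollary as stated does not assume it explicitly while the two inputs do.
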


Equivalently, we can reformulate it in the following way
\begin{cor} On a compact four dimensional Einstein manifold we have
$$ 
\frac{7R}{10}\int|\nabla W|^{2}-6\int W_{ijkl}W_{ijpq,t}W_{klpq,t} \leq 0 \,,
$$
with equality if and only if $\nabla W \equiv 0$.
\end{cor}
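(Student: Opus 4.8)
The plan is to recognise the claimed inequality as a purely algebraic reformulation, valid on any compact manifold, of the gap estimate $\int|\nabla^{2}W|^{2}\ge\frac{R}{12}\int|\nabla W|^{2}$ of the previous Corollary; the bridge between the two is the second Bochner type formula of Theorem \ref{teo-sbf}, integrated over $M$.

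First I would integrate the identity of Theorem \ref{teo-sbf} over the compact manifold $M$. Since $\int_{M}\Delta f=0$ for every smooth function $f$, this annihilates the left-hand side and gives
$$
0=\int|\nabla^{2}W|^{2}+\frac{13}{12}R\int|\nabla W|^{2}-10\int W_{ijkl}W_{ijpq,t}W_{klpq,t}\,,
$$
hence $6\int W_{ijkl}W_{ijpq,t}W_{klpq,t}=\frac{3}{5}\int|\nabla^{2}W|^{2}+\frac{13}{20}R\int|\nabla W|^{2}$. Substituting this expression into the left-hand side of the statement and simplifying, a one-line computation produces the key identity
$$
\frac{7R}{10}\int|\nabla W|^{2}-6\int W_{ijkl}W_{ijpq,t}W_{klpq,t}=-\frac{3}{5}\Big(\int|\nabla^{2}W|^{2}-\frac{R}{12}\int|\nabla W|^{2}\Big)\,.
$$

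Next I would invoke the previous Corollary. Although it is stated for $R>0$, its proof --- combining the identity of Proposition \ref{prop1} with the estimate of Theorem \ref{pro-imprhess} --- uses no hypothesis on the sign of $R$: indeed Proposition \ref{prop1} yields $\int|\nabla^{2}W|^{2}-\frac{R}{12}\int|\nabla W|^{2}=\frac{4}{3}\big(\int|\nabla^{2}W|^{2}-\frac{5}{12}\int|\Delta W|^{2}\big)$, which by Theorem \ref{pro-imprhess} is nonnegative on every compact four dimensional Einstein manifold, with equality precisely when equality holds in Theorem \ref{pro-imprhess}, i.e.\ when $\nabla W\equiv0$. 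Feeding this into the key identity above shows at once that the left-hand side of the statement is $\le0$, with equality if and only if $\nabla W\equiv0$.

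The argument is thus entirely routine: it only rearranges integral identities already at our disposal. The single point deserving a word of care --- and the reason I would not simply cite the previous Corollary verbatim --- is that the gap estimate, together with its equality case, must be used in the form valid for all compact Einstein metrics, which is transparent once it is written as $\tfrac{4}{3}\big(\int|\nabla^{2}W|^{2}-\tfrac{5}{12}\int|\Delta W|^{2}\big)\ge0$; no genuine analytic obstacle arises.
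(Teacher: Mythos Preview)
Your proof is correct and follows essentially the same route as the paper, which simply presents this corollary as an equivalent reformulation of the preceding gap estimate without further argument. In fact you are more careful than the paper on one point: the corollary is stated for arbitrary compact Einstein four-manifolds while the preceding gap estimate is stated only for $R>0$, and your rewriting $\int|\nabla^{2}W|^{2}-\tfrac{R}{12}\int|\nabla W|^{2}=\tfrac{4}{3}\big(\int|\nabla^{2}W|^{2}-\tfrac{5}{12}\int|\Delta W|^{2}\big)$ via Proposition~\ref{prop1} makes transparent that Theorem~\ref{pro-imprhess} (which carries no sign assumption) suffices.
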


We conclude this section putting together Propositions \ref{prop1} and \ref{pro-imprhess} in order to obtain the following $L^{2}$-bounds:

\begin{cor} On a four dimensional Einstein manifold with positive scalar curvature $R$ we have
$$
\frac{5}{12}\int|\Delta W|^{2} \,\leq\,\int |\nabla^{2}W|^{2} \,\leq\, \frac{5}{3}\int|\Delta W|^{2} \,,
$$
with equalities if and only if $\nabla W \equiv 0$.
\end{cor}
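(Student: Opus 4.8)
The plan is to obtain the double inequality by combining the two integral results already proved above, namely Proposition \ref{prop1} and Theorem \ref{pro-imprhess}, and to read off the equality cases from them.

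Before invoking those statements I would observe that the manifold is automatically compact: an Einstein four-manifold satisfies $\ricc=\tfrac{R}{4}g$, so when $R>0$ the Ricci curvature is positive and $(M^4,g)$ is compact by the Bonnet--Myers theorem. Hence the integrations by parts underlying Proposition \ref{prop1} and Theorem \ref{pro-imprhess} are legitimate and both results apply directly.

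The lower bound, together with its rigidity statement, is nothing but Theorem \ref{pro-imprhess}: one has $\int|\nabla^2 W|^2\ge\tfrac{5}{12}\int|\Delta W|^2$, with equality precisely when $\nabla W\equiv0$. For the upper bound I would rewrite the identity of Proposition \ref{prop1} as
$$\int|\nabla^2 W|^2=\frac{5}{3}\int|\Delta W|^2-\frac{R}{4}\int|\nabla W|^2,$$
and, since $R>0$, discard the nonnegative last term to get $\int|\nabla^2 W|^2\le\tfrac{5}{3}\int|\Delta W|^2$; here equality forces $\int|\nabla W|^2=0$, i.e.\ $\nabla W\equiv0$. Conversely, $\nabla W\equiv0$ makes all three integrands vanish, so both inequalities become equalities. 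Combining the two bounds and their equality discussions yields the corollary.

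In truth there is no genuinely new obstacle at this stage: the substance lies in the earlier results, and the only delicate point — the equality case of the Hessian/Laplacian estimate — has already been settled in the proof of Theorem \ref{pro-imprhess} via the positive-isotropic-curvature rigidity argument and the Micallef--Wang classification. The present proof is a direct bookkeeping combination of Proposition \ref{prop1} and Theorem \ref{pro-imprhess}, the sign hypothesis $R>0$ being used only to make the term $\tfrac{R}{4}\int|\nabla W|^2$ nonnegative (and, incidentally, to guarantee compactness).
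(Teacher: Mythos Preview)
Your argument is correct and is exactly the approach the paper intends: the corollary is stated immediately after Proposition \ref{prop1} and Theorem \ref{pro-imprhess} as a direct combination of the two, with the upper bound coming from the identity of Proposition \ref{prop1} (using $R>0$ to drop the $\tfrac{R}{4}\int|\nabla W|^{2}$ term) and the lower bound being Theorem \ref{pro-imprhess} verbatim. Your Bonnet--Myers remark, justifying why compactness need not be assumed explicitly when $R>0$, is a welcome clarification that the paper leaves implicit.
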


%

%
%
%
%
%

\

\section{Integral identities in the (anti-)self-dual cases}

In this final section we show that the integral identities proved in Section \ref{sec-int} hold separately for the self-dual and anti-self-dual part. First of all we have the following

\begin{theorem}\label{teo-idsa} Let $(M^{4},g)$ be a compact four dimensional Einstein manifold. Then
$$
\int |\nabla^{2}W^{\pm}|^{2} -\frac{5}{3}\int|\Delta W^{\pm}|^{2} + \frac{R}{4}\int |\nabla W^{\pm}|^{2} = 0\,.
$$
\end{theorem}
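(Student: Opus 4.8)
The plan is to re-run, line for line, the proof of Proposition \ref{prop1} with $W$ replaced everywhere by $W^{\pm}$. This is legitimate for two reasons. First, on a four dimensional Einstein manifold both $W^{+}$ and $W^{-}$ are separately divergence free (``half harmonic Weyl curvature''): since the Hodge operator $\star$ is parallel and $\diver W=0$, one has $\diver W^{\pm}=0$ as well; hence every statement in Section \ref{sec3} whose hypothesis is harmonic Weyl curvature — in particular Lemma \ref{lem_GradWeylNorm}, Lemma \ref{lem-key1}, the half-harmonic Bochner equation \eqref{niceself} and its tensorial form \eqref{eq-bw} — holds verbatim for $W^{\pm}$. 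Second, the self-dual and anti-self-dual blocks never couple in the cubic contractions that occur: the mixed terms $W^{\pm}\ast\nabla W^{\mp}\ast\nabla W^{\mp}$ and $W^{\pm}\ast W^{\mp}\ast W^{\mp}$ all vanish, exactly as in \eqref{eq-mix} and in the last part of the proof of Lemma \ref{lem-key2}, because $\omega^{-},\eta^{-},\theta^{-}$ are orthogonal to $\omega^{+},\eta^{+},\theta^{+}$. This bookkeeping — already implicit in the proofs of Lemmas \ref{lem-key1} and \ref{lem-key2} — is really the only thing to check; no new computation is needed, and it is the main (mild) obstacle.

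Granting this, I would first record the $W^{\pm}$ version of the rough Bochner formula of Proposition \ref{pro-boch}: it is the $k=1$ companion of Proposition \ref{pro-boch-k-pm}, obtained by the same argument, so that
\begin{equation*}
\frac{1}{2}\Delta|\nabla W^{\pm}|^{2} = |\nabla^{2} W^{\pm}|^{2} + \langle \nabla W^{\pm}, \nabla \Delta W^{\pm} \rangle +\frac{R}{4}|\nabla W^{\pm}|^{2} + 8 W^{\pm}_{ijkl,s}W^{\pm}_{rjkl,t}R_{rist}\,.
\end{equation*}
Next, the $W^{\pm}$ analogue of Lemma \ref{lem-paolo} follows exactly as in its proof: rewrite \eqref{eq-bw} for $W^{\pm}$ via the first Bianchi identity, contract with $W^{\pm}_{ijkl,t}$, use the symmetries of $W^{\pm}$, and invoke Lemma \ref{lem-key2} (whose proof in fact establishes the self-dual/anti-self-dual identity $W^{\pm}_{ijkl}W^{\pm}_{ipkq,t}W^{\pm}_{jplq,t}=\tfrac{1}{2} W^{\pm}_{ijkl}W^{\pm}_{ijpq,t}W^{\pm}_{klpq,t}$ directly). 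This gives
\begin{equation*}
\langle \nabla W^{\pm}, \nabla \Delta W^{\pm} \rangle = \frac{1}{2}R|\nabla W^{\pm}|^{2} -6\, W^{\pm}_{ijkl}W^{\pm}_{ijpq,t}W^{\pm}_{klpq,t}\,.
\end{equation*}
Feeding this, \eqref{RiemannEinstein}, the $W^{\pm}$ form of Lemma \ref{lem_GradWeylNorm} (so that $W^{\pm}_{ijkl,s}W^{\pm}_{sjkl,i}=\tfrac{1}{2}|\nabla W^{\pm}|^{2}$) and Lemma \ref{lem-key1} together with the vanishing of the mixed terms (so that $W^{\pm}_{ijkl,s}W^{\pm}_{rjkl,t}R_{rist}=-\tfrac{1}{2} W^{\pm}_{ijkl}W^{\pm}_{ijpq,t}W^{\pm}_{klpq,t}$) into the displayed rough formula produces the second Bochner type formula for $W^{\pm}$, namely $\tfrac{1}{2}\Delta|\nabla W^{\pm}|^{2}=|\nabla^{2}W^{\pm}|^{2}+\tfrac{13}{12}R|\nabla W^{\pm}|^{2}-10\,W^{\pm}_{ijkl}W^{\pm}_{ijpq,t}W^{\pm}_{klpq,t}$.

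Finally I would integrate over the compact manifold $M$. From the $W^{\pm}$ version of Lemma \ref{lem-paolo},
\begin{equation*}
\int|\Delta W^{\pm}|^{2}=-\int\langle\nabla W^{\pm},\nabla\Delta W^{\pm}\rangle=-\frac{R}{2}\int|\nabla W^{\pm}|^{2}+6\int W^{\pm}_{ijkl}W^{\pm}_{ijpq,t}W^{\pm}_{klpq,t}\,,
\end{equation*}
so that $10\int W^{\pm}_{ijkl}W^{\pm}_{ijpq,t}W^{\pm}_{klpq,t}=\tfrac{5}{3}\int|\Delta W^{\pm}|^{2}+\tfrac{5}{6} R\int|\nabla W^{\pm}|^{2}$. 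Substituting this into the integral of the second Bochner type formula, whose left-hand side integrates to zero by the divergence theorem, yields
\begin{equation*}
0=\int|\nabla^{2}W^{\pm}|^{2}+\frac{13}{12}R\int|\nabla W^{\pm}|^{2}-\frac{5}{3}\int|\Delta W^{\pm}|^{2}-\frac{5}{6} R\int|\nabla W^{\pm}|^{2}\,,
\end{equation*}
which is the asserted identity since $\tfrac{13}{12}-\tfrac{5}{6}=\tfrac{1}{4}$.
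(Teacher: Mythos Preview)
Your proof is correct and follows essentially the paper's route: the rough Bochner formula for $W^{\pm}$, the reduction of the curvature term via Lemma \ref{lem-key1} together with the vanishing of the mixed contractions \eqref{eq-mix}, and the evaluation of $\langle\nabla W^{\pm},\nabla\Delta W^{\pm}\rangle$ through the quadratic expression for $\Delta W$ and Lemma \ref{lem-key2}. The only cosmetic difference is that you assert the $W^{\pm}$ analogue of Lemma \ref{lem-paolo} (and hence a second Bochner formula for $W^{\pm}$) pointwise, whereas the paper reaches the same integrated identity by first rewriting $\int\langle\nabla W^{+},\nabla\Delta W^{+}\rangle=\int\langle\nabla W^{+},\nabla\Delta W\rangle$ via integration by parts and the orthogonality $\Delta W^{+}\perp\Delta W^{-}$.
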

\begin{proof} We will prove it for the self-dual case $W^{+}$. From the rough second Bochner formula in Proposition \ref{pro-boch-k-pm}, we have
$$
\frac{1}{2}\Delta|\nabla W^{+}|^{2} = |\nabla^{2} W^{+}|^{2} + \langle \nabla W^{+}, \nabla \Delta W^{+} \rangle +\frac{R}{4}|\nabla W^{+}|^{2} + 8 W^{+}_{ijkl,s}W^{+}_{rjkl,t}W_{rist} + \frac{2}{3}R\,W^{+}_{ijkl,s}W_{sjkl,i}\,.
$$
Now observe that, using Lemma \ref{lem_GradWeylNorm}, one has
$$
W^{+}_{ijkl,s}W_{sjkl,i}=W^{+}_{ijkl,s}W^{+}_{ijks,l}=\frac{1}{2}|\nabla W^{+}|^{2}\,.
$$
Moreover, renaming indexes we have $W^{+}_{ijkl,s}W^{+}_{rjkl,t}W_{rist}=W_{ijkl}W^{+}_{jpqt,k}W^{+}_{ipqt,l}$. From equation \eqref{eq-mix}, we have
$$
W_{ijkl}W^{+}_{jpqt,k}W^{+}_{ipqt,l}=W_{ijkl}^{+}W^{+}_{jpqt,k}W^{+}_{ipqt,l}
$$
and using Lemma \ref{lem-key1}, we obtain
$$
W^{+}_{ijkl,s}W^{+}_{rjkl,t}W_{rist} = -\frac{1}{2}W^{+}_{ijkl}W^{+}_{ijpq,t}W^{+}_{klpq,t}\,.
$$
Now, since the Hessian of $W$ decomposes as $\nabla^{2}W = \nabla^{2} W^{+} + \nabla^{2} W^{-}$, one has
$$
\int \langle \nabla W^{+}, \nabla \Delta W^{+} \rangle = - \int |\Delta W^{+}|^{2} = - \int \Delta W^{+}_{ijkl}\Delta W_{ijkl} = \int \langle \nabla W^{+}, \nabla \Delta W \rangle \,.
$$
By orthogonality, a simple computation shows that
\begin{equation}\label{eq-lapsel}
\int |\Delta W^{+}|^{2}= -\int \langle \nabla W^{+}, \nabla \Delta W \rangle = -\frac{R}{2}\int |\nabla W^{+}|^{2} + 6 \int W^{+}_{ijkl}W^{+}_{ijpq,t}W^{+}_{klpq,t}
\end{equation}
and the result follows.
\end{proof}

In particular, using the previous formula and readapting the computations in Theorem \ref{pro-imprhess}, it is not difficult to prove Proposition \ref{thm-gap} in the introduction.

\begin{theorem}\label{teo-gapsa} Let $(M^{4},g)$ be a four dimensional Einstein manifold with positive scalar curvature $R$. Then
$$
\int|\nabla^{2} W^{\pm}|^{2} \geq \frac{R}{12} \int |\nabla W^{\pm}|^{2}
$$
with equality if and only if $\nabla W^{\pm} \equiv 0$.
\end{theorem}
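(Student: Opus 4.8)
The plan is to repeat, chirally, the proof of the non-self-dual gap inequality obtained from Propositions \ref{prop1} and \ref{pro-imprhess}, using at each step the self-dual / anti-self-dual counterparts of the auxiliary results. First I would record that all the ingredients survive the chiral splitting. The rough second Bochner identity for $\nabla W^{\pm}$ is Proposition \ref{pro-boch-k-pm} with $k=1$; the chiral analogue of Proposition \ref{prop1} is exactly Theorem \ref{teo-idsa}; and the second Bochner formula of Theorem \ref{teo-sbf} holds with $W$ replaced by $W^{\pm}$, because its proof only invokes Proposition \ref{pro-boch}, Lemmas \ref{lem_GradWeylNorm}, \ref{lem-key1}, \ref{lem-key2} and \ref{lem-paolo}, each of which is available for $W^{\pm}$ --- the decisive point, already used in the proof of Theorem \ref{teo-idsa}, being that the mixed contractions of $W^{\pm}$ against $W^{\mp}$ (and of their covariant derivatives) vanish, as in \eqref{eq-mix}. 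Hence
$$
\frac{1}{2}\Delta |\nabla W^{\pm}|^{2} = |\nabla^{2}W^{\pm}|^{2} + \frac{13}{12}R|\nabla W^{\pm}|^{2} - 10\,W^{\pm}_{ijkl}W^{\pm}_{ijpq,t}W^{\pm}_{klpq,t}
$$
on any four dimensional Einstein manifold.

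Next I would establish the chiral Hessian--Laplacian estimate $\int|\nabla^{2}W^{\pm}|^{2}\geq\frac{5}{12}\int|\Delta W^{\pm}|^{2}$. The pointwise inequality
$$
|\nabla^{2}W^{\pm}|^{2}\;\geq\;\tfrac14\big|W^{\pm}_{ijkl,st}-W^{\pm}_{ijkl,ts}\big|^{2}+\tfrac14|\Delta W^{\pm}|^{2}
$$
is the same $4\times4$ matrix estimate used in Theorem \ref{pro-imprhess}, with equality exactly when $W^{\pm}_{ijkl,st}+W^{\pm}_{ijkl,ts}=\frac12(\Delta W^{\pm}_{ijkl})\delta_{st}$ at every point. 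The chiral version of Lemma \ref{lem-1}, namely $\int\big|W^{\pm}_{ijkl,st}-W^{\pm}_{ijkl,ts}\big|^{2}=\frac23\int|\Delta W^{\pm}|^{2}$, follows from the chiral version of Corollary \ref{cor-d2} (itself obtained by integrating Proposition \ref{pro-boch-k-pm} with $k=1$ and using Lemma \ref{lem_GradWeylNorm}), from Lemma \ref{lem-key1} applied to $W^{\pm}$, and from \eqref{eq-lapsel}. Feeding these into Theorem \ref{teo-idsa} exactly as in the non-chiral corollary gives $\int|\nabla^{2}W^{\pm}|^{2}\geq\frac{R}{12}\int|\nabla W^{\pm}|^{2}$.

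For the equality case, where $R>0$, equality forces the pointwise identity $W^{\pm}_{ijkl,st}+W^{\pm}_{ijkl,ts}=\frac12(\Delta W^{\pm}_{ijkl})\delta_{st}$. Taking its divergence in $t$, contracting with $W^{\pm}_{ijkl,s}$ and substituting the chiral forms of Lemma \ref{lem-comsec}, Lemma \ref{lem_GradWeylNorm}, Lemma \ref{lem-key1}, the chiral second Bochner formula above and Lemma \ref{lem-paolo} --- precisely as in the proof of Theorem \ref{pro-imprhess} --- one obtains, at every point, $W^{\pm}_{ijkl}W^{\pm}_{ijpq,t}W^{\pm}_{klpq,t}=\frac{7}{60}R|\nabla W^{\pm}|^{2}$, and hence $\frac12\Delta|\nabla W^{\pm}|^{2}=|\nabla^{2}W^{\pm}|^{2}-\frac{1}{12}R|\nabla W^{\pm}|^{2}$. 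Assume $\nabla W^{\pm}\not\equiv0$. Where $\nabla W^{\pm}\neq0$ the pointwise identity exhibits $\frac{7R}{120}$ as a positive eigenvalue of $W^{\pm}$ acting on $\Lambda^{\pm}$ (as in Theorem \ref{pro-imprhess}); being trace-free on the three dimensional $\Lambda^{\pm}$, $W^{\pm}$ has eigenvalues $\lambda^{\pm}\leq\mu^{\pm}\leq\nu^{\pm}$ summing to zero, so $\frac{7R}{120}$ equals $\mu^{\pm}$ or $\nu^{\pm}$. The possibility $\mu^{\pm}=\frac{7R}{120}$ is excluded exactly as in Theorem \ref{pro-imprhess}: it would give $\det W^{\pm}<0$, i.e. $W^{\pm}_{ijkl}W^{\pm}_{ijpq}W^{\pm}_{klpq}<0$, on the dense set $\{\nabla W^{\pm}\neq0\}$ --- the complement of the zero set of a real-analytic function, since an Einstein metric is real-analytic in harmonic coordinates --- and integrating \eqref{niceself} over $M$ then forces $W^{\pm}\equiv0$, a contradiction. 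Hence $\nu^{\pm}=\frac{7R}{120}<R/6$, so $g$ has positive isotropic curvature on $\{\nabla W^{\pm}\neq0\}$, and therefore on all of $M$ by continuity; by Micallef--Wang \cite{micwan}, $(M,g)$ is then a quotient of the round sphere $\mathbb{S}^{4}$, for which $\nabla W\equiv0$ --- contradicting $\nabla W^{\pm}\not\equiv0$. Thus $\nabla W^{\pm}\equiv0$, and the converse implication is immediate.

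The main obstacle is bookkeeping rather than conceptual: one must check that every identity obtained by commuting covariant derivatives restricts correctly to the chiral halves, i.e. that all the mixed self-dual/anti-self-dual contractions produced along the way vanish, as in \eqref{eq-mix}; this is what legitimizes the chiral analogues of Lemma \ref{lem-comsec}, Corollary \ref{cor-d2} and Lemma \ref{lem-1}, and of the second Bochner formula. The other delicate step is the last one in the equality case, where one needs that on an Einstein four-manifold the bound $\nu^{\pm}<R/6$ already forces positive isotropic curvature, so that Micallef--Wang applies.
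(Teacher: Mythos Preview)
Your proposal follows the paper's own route exactly: the paper's proof of Theorem~\ref{teo-gapsa} is literally the one sentence ``using the previous formula and readapting the computations in Theorem~\ref{pro-imprhess}'', and that is precisely what you carry out. The chiral integral identity (Theorem~\ref{teo-idsa}), the chiral pointwise Hessian--Laplacian splitting, the chiral version of Lemma~\ref{lem-1} via \eqref{eq-lapsel}, and the resulting inequality $\int|\nabla^{2}W^{\pm}|^{2}\geq\frac{5}{12}\int|\Delta W^{\pm}|^{2}$ are all handled correctly, and plugging into Theorem~\ref{teo-idsa} gives the stated inequality.

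The gap is in the equality case, at the very step you yourself flag as delicate. On an Einstein four-manifold, positive isotropic curvature is equivalent to the \emph{pair} of conditions $\nu^{+}<R/6$ \emph{and} $\nu^{-}<R/6$ (equivalently, the sum of the two smallest eigenvalues of $\mathcal{R}|_{\Lambda^{+}}$ and of $\mathcal{R}|_{\Lambda^{-}}$ are both positive). Your argument, carried out for $W^{+}$ say, only yields $\nu^{+}=\tfrac{7R}{120}<R/6$ on the dense set $\{\nabla W^{+}\neq0\}$; it gives no information whatsoever about $\nu^{-}$. Hence the assertion ``$\nu^{\pm}<R/6$ already forces positive isotropic curvature'' is false as stated, and Micallef--Wang does not apply. (For a concrete obstruction: $\mathbb{CP}^{2}$ with the Fubini--Study metric has $W^{-}\equiv0$, so $\nu^{-}=0<R/6$, yet it does not have strictly positive isotropic curvature.) To close the equality case in the chiral setting you need a genuinely half-sided rigidity argument --- either a half-PIC analogue of Micallef--Wang, or a direct argument exploiting that $\nu^{\pm}$ is constant on a dense set together with the chiral Bochner identity --- rather than a verbatim transcription of the full-$W$ case.
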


\begin{lemma}\label{lem-quart} Let $(M^{4},g)$ be a four dimensional Einstein manifold. Then
$$
\int W^{\pm}_{ijkl,s}W^{\pm}_{rjkl,t}W_{rist} = \frac{1}{8}\int \Big(R\, W^{\pm}_{ijkl}W^{\pm}_{ijpq}W^{\pm}_{klpq} - |W^{\pm}|^{4}\Big)\,.
$$
Equivalently,
$$
\int W^{\pm}_{ijkl,s}W^{\pm}_{rjkl,t}W_{rist} = \frac{R}{24} \int|\nabla W^{+}|^{2}-\frac{1}{48}\int |W^{+}|^{2}\Big(6|W^{+}|^{2}-R^{2}\Big)
$$
\end{lemma}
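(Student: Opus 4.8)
The strategy is to strip off the curvature factor $W_{rist}$ by the same reduction used in Theorem~\ref{teo-idsa}, then to integrate by parts so as to trade the remaining gradient term for a purely algebraic integral, and finally to pass to the ``equivalent'' form by means of the integrated first Bochner identity \eqref{niceself}. As always it suffices to treat the self-dual case. First, arguing exactly as in the proof of Theorem~\ref{teo-idsa} --- relabelling indices, discarding the mixed term via \eqref{eq-mix}, and invoking Lemma~\ref{lem-key1} --- one obtains the pointwise identity
$$
W^{+}_{ijkl,s}W^{+}_{rjkl,t}W_{rist}=W^{+}_{ijkl}W^{+}_{jpqt,k}W^{+}_{ipqt,l}=-\tfrac12\,W^{+}_{ijkl}W^{+}_{ijpq,t}W^{+}_{klpq,t}.
$$
Setting $I:=\int W^{+}_{ijkl}W^{+}_{ijpq,t}W^{+}_{klpq,t}$, we are reduced to computing $I$.

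I would integrate by parts the derivative $\partial_t$ on the last factor; since, by the symmetries of $W^{+}$, the ``triangle'' contraction $\int W^{+}_{ijkl,t}W^{+}_{ijpq,t}W^{+}_{klpq}$ equals $I$ again, this gives $2I=-\int W^{+}_{ijkl}W^{+}_{klpq}\,\Delta W^{+}_{ijpq}$. Next I would insert the pointwise equation for $\Delta W^{+}$, i.e. \eqref{eq-bw} rewritten (via the first Bianchi identity) as in the proof of Lemma~\ref{lem-paolo},
$$
\Delta W^{+}_{ijpq}=\tfrac{R}{2}\,W^{+}_{ijpq}-W^{+}_{ijab}W^{+}_{pqab}-2\bigl(W^{+}_{iapb}W^{+}_{jaqb}-W^{+}_{iaqb}W^{+}_{japb}\bigr),
$$
and split $W^{+}_{ijkl}W^{+}_{klpq}\Delta W^{+}_{ijpq}$ into three contributions: a term $\tfrac{R}{2}\,W^{+}_{ijkl}W^{+}_{ijpq}W^{+}_{klpq}$ and two degree-four fully contracted scalars built only from $W^{+}$.

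The point now is that $W^{+}$ is nothing but the trace-free symmetric endomorphism it induces on the three-dimensional bundle $\Lambda^{+}$, and for such an endomorphism every degree-four scalar invariant is proportional to $\bigl(\operatorname{tr}(W^{+})^{2}\bigr)^{2}$ (indeed $\operatorname{tr}(W^{+})^{3}$ has degree three, while $\operatorname{tr}(W^{+})^{4}=\tfrac12\bigl(\operatorname{tr}(W^{+})^{2}\bigr)^{2}$ by Cayley--Hamilton in dimension three); hence each of the two quartic contributions above is a universal multiple of $|W^{+}|^{4}$. These multiples are pinned down by a short computation in the Derdzinski eigenframe \eqref{eq-derw}--\eqref{eq-derder}, in the spirit of Lemmas~\ref{lem-key1} and \ref{lem-key2} (using $W^{+}_{ijkl}\omega^{+}_{ij}=\lambda^{+}\omega^{+}_{kl}$, the quaternionic relations, the normalisation of $\omega^{+},\eta^{+},\theta^{+}$ and $\lambda^{+}+\mu^{+}+\nu^{+}=0$), or equivalently by evaluating on the model $W^{+}\sim\operatorname{diag}(1,1,-2)$. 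Carrying this out yields $I=-\tfrac{R}{4}\int W^{+}_{ijkl}W^{+}_{ijpq}W^{+}_{klpq}+\tfrac14\int|W^{+}|^{4}$, whence $\int W^{+}_{ijkl,s}W^{+}_{rjkl,t}W_{rist}=-\tfrac12 I=\tfrac18\int\bigl(R\,W^{+}_{ijkl}W^{+}_{ijpq}W^{+}_{klpq}-|W^{+}|^{4}\bigr)$, the first formula. Finally, integrating \eqref{niceself} over $M$ gives $\int W^{\pm}_{ijkl}W^{\pm}_{ijpq}W^{\pm}_{klpq}=\tfrac13\int|\nabla W^{\pm}|^{2}+\tfrac{R}{6}\int|W^{\pm}|^{2}$, and substituting this into the previous line produces the ``equivalent'' identity $\tfrac{R}{24}\int|\nabla W^{\pm}|^{2}-\tfrac{1}{48}\int|W^{\pm}|^{2}\bigl(6|W^{\pm}|^{2}-R^{2}\bigr)$.

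\noindent\textbf{Main obstacle.} The crux is the algebraic step: while the $\Lambda^{+}$ invariant-theoretic remark guarantees a priori that both $W^{+}_{ijkl}W^{+}_{klpq}W^{+}_{ijab}W^{+}_{pqab}$ and $W^{+}_{ijkl}W^{+}_{klpq}\bigl(W^{+}_{iapb}W^{+}_{jaqb}-W^{+}_{iaqb}W^{+}_{japb}\bigr)$ are constant multiples of $|W^{+}|^{4}$, extracting the exact constants --- especially for the second one, whose factor $W^{+}_{iapb}W^{+}_{jaqb}-W^{+}_{iaqb}W^{+}_{japb}$ forces careful bookkeeping with the quaternionic structure --- is where the actual work lies; one also has to keep track of the $\Lambda^{2}$-normalisation conventions \eqref{conv}.
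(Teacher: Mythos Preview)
Your approach is correct and leads to the same identity, but it is genuinely different from the paper's argument. The paper does \emph{not} pass through Lemma~\ref{lem-key1} or the Laplacian of $W^{+}$; instead, after replacing $W_{rist}$ by $W^{+}_{rist}$ via \eqref{eq-mix}, it integrates by parts to move the $t$-derivative onto the other gradient factor, obtaining the Hessian $W^{+}_{ijkl,st}$, then uses the skew-symmetry of $W^{+}_{rist}$ in $s,t$ to replace this Hessian by the commutator $W^{+}_{ijkl,st}-W^{+}_{ijkl,ts}$, and finally inserts the commutation formula of Lemma~\ref{lem-comsec}. This produces a cubic term (giving the $R\,W^{+}W^{+}W^{+}$ piece) and a quartic $Q=Q_{1}+Q_{2}+2Q_{3}$, which is computed directly in the Derdzinski eigenframe to yield $Q_{1}=\tfrac14|W^{+}|^{4}$ and $Q_{2}=Q_{3}=0$.

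Your route --- Lemma~\ref{lem-key1}, then integration by parts to a Laplacian, then the explicit $\Delta W^{+}$ --- is arguably more conceptual, and your invariant-theoretic remark (Cayley--Hamilton for trace-free $3\times3$ symmetric matrices forces every quartic scalar to be a multiple of $|W^{+}|^{4}$) is a nice a priori reduction that the paper does not use. Two small caveats worth tightening: (i) the pointwise formula for $\Delta W^{+}$ with only $W^{+}$'s on the right is not stated in the paper (only the full-$W$ version \eqref{eq-bw}); it does hold, but you should justify it --- either by rerunning the derivation of \eqref{eq-bw} with the second Bianchi identity for $W^{+}$ and checking that the $W^{-}$-contributions from the commutation step vanish, or simply by using $\Delta W$ and observing that the $pq$-contraction with $W^{+}_{klpq}$ kills the anti-self-dual part; (ii) your route still requires the explicit evaluation of the two quartics (indeed $A_{1}=\tfrac12|W^{+}|^{4}$ and $A_{2}=0$, consistent with your stated value of $I$), which is comparable in length to the paper's computation of $Q_{2},Q_{3}$.
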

\begin{proof}
First of all, from equation \eqref{eq-mix}, one has
$$
W^{+}_{ijkl,s}W^{+}_{rjkl,t}W_{rist} = W^{+}_{ijkl,s}W^{+}_{rjkl,t}W^{+}_{rist}\,.
$$
Moreover, integrating by parts and using the commutation formula in Lemma \ref{lem-comsec}, we obtain

\begin{align*}
2\int W^{+}_{ijkl,s}W^{+}_{rjkl,t}W^{+}_{rist} =& -2\int W^{+}_{ijkl,st}W^{+}_{rjkl}W^{+}_{rist} \\
=& -\int (W^{+}_{ijkl,st}-W^{+}_{ijkl,ts})W^{+}_{rjkl}W^{+}_{rist}\\
=& -\int \Big(W^{+}_{pjkl}W_{pist}+W^{+}_{ipkl}W_{pjst}+W^{+}_{ijpl}W_{pkst}+W^{+}_{ijkp}W_{plst}+
\\
&+\frac{R}{12}\big(W^{+}_{sjkl}g_{it}-W^{+}_{tjkl}g_{is}+W^{+}_{iskl}g_{jt}-W^{+}_{itkl}g_{js}\\
&+W^{+}_{ijsl}g_{kt}-W^{+}_{ijtl}g_{ks}+W^{+}_{ijks}g_{lt}-W^{+}_{ijkt}g_{ls}\big)\Big)W^{+}_{rjkl}W^{+}_{rist} \\
=& -\int \big(W^{+}_{pjkl}W_{pist}+W^{+}_{ipkl}W_{pjst}+W^{+}_{ijpl}W_{pkst}+W^{+}_{ijkp}W_{plst}\big)W^{+}_{rjkl}W^{+}_{rist} +\\
&-\frac{R}{6}\int\big(W^{+}_{iskl}W^{+}_{rjkl}W^{+}_{risj}+W^{+}_{ijsl}W^{+}_{rjkl}W^{+}_{risk}+W^{+}_{ijks}W^{+}_{rjkl}W^{+}_{risl}\big)\\
=& -\int \big(W^{+}_{pjkl}W^{+}_{pist}+W^{+}_{ipkl}W^{+}_{pjst}+W^{+}_{ijpl}W^{+}_{pkst}+W^{+}_{ijkp}W^{+}_{plst}\big)W^{+}_{rjkl}W^{+}_{rist} +\\
&+\frac{R}{6}\int\big(2W^{+}_{ijkl}W^{+}_{ipkq}W^{+}_{jplq}+\tfrac{1}{2}W_{ijkl}^{+}W^{+}_{ijpq}W^{+}_{klpq}\big)\\
=& \frac{R}{4}\int W_{ijkl}^{+}W^{+}_{ijpq}W^{+}_{klpq}\\
&-\int \big(W^{+}_{pjkl}W^{+}_{pist}+W^{+}_{ipkl}W^{+}_{pjst}+W^{+}_{ijpl}W^{+}_{pkst}+W^{+}_{ijkp}W^{+}_{plst}\big)W^{+}_{rjkl}W^{+}_{rist} \,.
\end{align*}
To conclude the proof, we have to show the fourth order identity
$$
\big(W^{+}_{pjkl}W^{+}_{pist}+W^{+}_{ipkl}W^{+}_{pjst}+W^{+}_{ijpl}W^{+}_{pkst}+W^{+}_{ijkp}W^{+}_{plst}\big)W^{+}_{rjkl}W^{+}_{rist} = \frac{1}{4}|W^{+}|^{4} \,.
$$
Define
$$
Q := \big(W^{+}_{pjkl}W^{+}_{pist}+W^{+}_{ipkl}W^{+}_{pjst}+W^{+}_{ijpl}W^{+}_{pkst}+W^{+}_{ijkp}W^{+}_{plst}\big)W^{+}_{rjkl}W^{+}_{rist}\,.
$$
Note that
$$
Q = W^{+}_{pjkl}W^{+}_{rjkl}W^{+}_{pist}W^{+}_{rist}+W^{+}_{ipkl}W^{+}_{rjkl}W^{+}_{pjst}W^{+}_{rist}+2W^{+}_{ijpl}W^{+}_{rjkl}W^{+}_{pkst}W^{+}_{rist} =:Q_{1}+Q_{2}+2Q_{3}\,. 
$$
From equation \eqref{WeylWeylMetric}, we have
$$
Q_{1} = \frac{1}{4}|W^{+}|^{2}\delta_{pr}\,W^{+}_{pist}W^{+}_{rist}  = \frac{1}{4}|W^{+}|^{4}\,.
$$
So, it remains to show that $Q_{2}=Q_{3}=0$ on $M$. Following the notation of Section \ref{sec3}, we easily get
$$
W^{+}_{ipkl}W^{+}_{rjkl} = \lambda^{2}\omega_{ip}\omega_{rj}+\mu^{2}\eta_{ip}\eta_{rj}+\nu^{2}\theta_{ip}\theta_{rj} \,.
$$
Thus,
\begin{align*}
Q_{2}=W^{+}_{ipkl}W^{+}_{rjkl}W^{+}_{pjst}W^{+}_{rist} &=\big( \lambda^{2}\omega_{ip}\omega_{rj}+\mu^{2}\eta_{ip}\eta_{rj}+\nu^{2}\theta_{ip}\theta_{rj}\big)\big( \lambda^{2}\omega_{pj}\omega_{ri}+\mu^{2}\eta_{pj}\eta_{ri}+\nu^{2}\theta_{pj}\theta_{ri}\big)\\
&=-2(\lambda^{4}+\mu^{4}+\nu^{4})+4(\lambda^{2}\mu^{2}+\lambda^{2}\nu^{2}+\mu^{2}\nu^{2}) =\,0 \,,
\end{align*}
since $\lambda+\mu+\nu=0$. A similar computation shows
\begin{align*}
Q_{3} &= W^{+}_{ijpl}W^{+}_{rjkl}W^{+}_{pkst}W^{+}_{rist} \\
&= \frac{1}{4}\Big((\lambda^{2}+\mu^{2}+\nu^{2})\delta_{ir}\delta_{pk}+2\lambda\mu\theta_{ir}\theta_{pk}+2\lambda\nu\eta_{ir}\eta_{pk}+2\mu\nu\omega_{ir}\omega_{pk}\Big)\\
&\quad\times\,\big( \lambda^{2}\omega_{pk}\omega_{ri}+\mu^{2}\eta_{pk}\eta_{ri}+\nu^{2}\theta_{pk}\theta_{ri}\big)\\
&= -2\big(\lambda\mu\nu^{2}+\lambda\nu\mu^{2}+\mu\nu\lambda^{2}\big) = \,-2\lambda\mu\nu(\lambda+\mu+\nu) = \,0 \,.
\end{align*}
This concludes the proof of the first identity in the lemma. The second one simply follows from the Bochner identity \eqref{niceself}.
\end{proof}

Putting together Lemma \ref{lem-quart}, equation \eqref{eq-lapsel}, Lemma \ref{lem-key1} and Lemma \ref{teo-idsa}, we obtain the following:

\begin{proposition} Let $(M^{4},g)$ be a four dimensional Einstein manifold. Then
$$
\int |\Delta W^{\pm}|^{2} + R \int |\nabla W^{\pm}|^{2} = \frac{1}{4}\int |W^{\pm}|^{2}\Big(6|W^{\pm}|^{2}-R^{2}\Big) 
$$ 
and
$$
\int |\nabla^{2} W^{\pm}|^{2} + \frac{23}{12}R \int |\nabla W^{\pm}|^{2} = \frac{5}{12}\int |W^{\pm}|^{2}\Big(6|W^{\pm}|^{2}-R^{2}\Big) \,.
$$
\end{proposition}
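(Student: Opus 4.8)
The plan is to assemble the identity from the four ingredients listed just before the statement, the computation being purely algebraic once these are in hand; for definiteness I would work with $W^{+}$, the case of $W^{-}$ being word for word the same. The starting point is \eqref{eq-lapsel}, which on a compact four dimensional Einstein manifold reads
$$
\int |\Delta W^{\pm}|^{2}= -\frac{R}{2}\int |\nabla W^{\pm}|^{2} + 6 \int W^{\pm}_{ijkl}W^{\pm}_{ijpq,t}W^{\pm}_{klpq,t}\,,
$$
so that everything reduces to computing the cubic integral $\int W^{\pm}_{ijkl}W^{\pm}_{ijpq,t}W^{\pm}_{klpq,t}$. To evaluate it I would relabel indices, exactly as in the proof of Theorem \ref{teo-idsa}, to recognize $W^{\pm}_{ijkl,s}W^{\pm}_{rjkl,t}W_{rist}=W_{ijkl}W^{\pm}_{jpqt,k}W^{\pm}_{ipqt,l}$; by \eqref{eq-mix} the full Weyl tensor here may be replaced by $W^{\pm}$, and then the $W^{\pm}$-version of Lemma \ref{lem-key1} gives the pointwise identity $W^{\pm}_{ijkl,s}W^{\pm}_{rjkl,t}W_{rist}=-\tfrac12\,W^{\pm}_{ijkl}W^{\pm}_{ijpq,t}W^{\pm}_{klpq,t}$. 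Comparing with the (equivalent) second formula in Lemma \ref{lem-quart} then yields
$$
\int W^{\pm}_{ijkl}W^{\pm}_{ijpq,t}W^{\pm}_{klpq,t} = -\frac{R}{12}\int |\nabla W^{\pm}|^{2} + \frac{1}{24}\int |W^{\pm}|^{2}\big(6|W^{\pm}|^{2}-R^{2}\big)\,.
$$

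Plugging this into the displayed form of \eqref{eq-lapsel} and collecting the two $\int|\nabla W^{\pm}|^{2}$ contributions ($-R/2$ from the first term and $6\cdot(-R/12)=-R/2$ from the second) produces the first claimed identity
$$
\int |\Delta W^{\pm}|^{2} + R \int |\nabla W^{\pm}|^{2} = \frac{1}{4}\int |W^{\pm}|^{2}\big(6|W^{\pm}|^{2}-R^{2}\big)\,.
$$
For the second identity I would substitute this into Theorem \ref{teo-idsa}, i.e. into $\int |\nabla^{2}W^{\pm}|^{2} = \tfrac{5}{3}\int|\Delta W^{\pm}|^{2} - \tfrac{R}{4}\int |\nabla W^{\pm}|^{2}$; the curvature term becomes $\tfrac{5}{3}\cdot\tfrac14=\tfrac{5}{12}$ times $\int |W^{\pm}|^{2}(6|W^{\pm}|^{2}-R^{2})$, while the gradient coefficient is $-\tfrac53-\tfrac14=-\tfrac{23}{12}$, giving at once $\int |\nabla^{2} W^{\pm}|^{2} + \tfrac{23}{12}R \int |\nabla W^{\pm}|^{2} = \tfrac{5}{12}\int |W^{\pm}|^{2}(6|W^{\pm}|^{2}-R^{2})$, as asserted.

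I do not expect a genuine analytic obstacle here: the argument is pure bookkeeping built on results already established. The one place that requires care is the identification of the cubic derivative term $W^{\pm}_{ijkl,s}W^{\pm}_{rjkl,t}W_{rist}$ of Lemma \ref{lem-quart} with the term $W^{\pm}_{ijkl}W^{\pm}_{ijpq,t}W^{\pm}_{klpq,t}$ of \eqref{eq-lapsel}, which hinges on the symmetries of $W^{\pm}$, on \eqref{eq-mix}, and on Lemma \ref{lem-key1}; and, as always in this kind of computation, the main risk is in tracking the numerical factors ($6$, $-\tfrac12$, $\tfrac{1}{24}$, $\tfrac53$, $-\tfrac14$). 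Compactness enters only implicitly, to license the integrations by parts underlying \eqref{eq-lapsel}, Lemma \ref{lem-quart} and Theorem \ref{teo-idsa}.
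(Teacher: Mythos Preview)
Your proposal is correct and follows exactly the route indicated in the paper, which simply states that the proposition follows by putting together Lemma~\ref{lem-quart}, equation~\eqref{eq-lapsel}, Lemma~\ref{lem-key1} and Theorem~\ref{teo-idsa}. You have carried out precisely this bookkeeping, and all the numerical factors check out.
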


\

\

\

\begin{ackn}
\noindent The authors are members of the Gruppo Nazionale per l'Analisi Matematica, la Probabilit\`{a} e le loro Applicazioni (GNAMPA) of the Istituto Nazionale di Alta Matematica (INdAM) and they are supported by GNAMPA project ``Strutture Speciali e PDEs in Geometria Riemanniana''.
\end{ackn}

\

\

\bibliographystyle{abbrv}

\bibliography{Einstein4D}

\

\end{document}